\begin{document}

\makeatletter
\newenvironment{exemples}[1][\hskip-1ex]{\par\reset@font{\slshape %
    Exemples\hskip1ex#1. }}{\par}
\newenvironment{comp}[1][\hskip-1ex]{\par\reset@font{\slshape %
    Complement\hskip1ex#1\pointir }}{\par}
\newenvironment{proof}[1][\hskip-1ex]{\par\reset@font{\slshape %
    Proof\hskip1ex#1\pointir}}{\hfill\finpr\par}
    \newenvironment{sketchproof}[1][\hskip-1ex]{\par\reset@font{\slshape %
    Sketch of proof\hskip1ex#1\pointir}}{\hfill\finpr\par}
\newenvironment{remarks}{\par %
\noindent\textbf{Remarks}\pointir}{\hfill\par}
\newenvironment{remark}{\par %
\noindent\textbf{Remark}\pointir}{\hfill\par}  
\newenvironment{example}{\par %
\noindent\textbf{Example}}{\hfill\par} 
 
\newtheorem{corollary}{Corollary}[section]
\newtheorem{definition}{Definition}[section]
\newtheorem{question}{Question}[section]
\newtheorem{theo}{Theorem}[section]
\newtheorem{lemma}{Lemma}[section]
\newtheorem{proposition}{Proposition}[section]

\long\def\InsertFig#1 #2 #3 #4\EndFig{\hbox{\hskip #1 mm$
\vbox to #2mm{\vfil\includegraphics{#3}}#4$}}
\long\def
\LabelTeX#1 #2 #3\ELTX{\rlap{\kern#1mm\raise#2mm\hbox{#3}}}%

\def\build#1#2\fin{\mathrel{\mathop{\kern0pt#1}\limits#2}}
\def\pointir{.\kern.4em\ignorespaces}%
\def\openbox{\leavevmode
  \hbox to.77778em{%
  \hfil\vrule
  \vbox to.675em{\hrule width.6em\vfil\hrule}%
  \vrule\hfil}}
\def\finpr{\openbox}

\def\hfl{{\hbox to 8mm{\rightarrowfill}}}

\def\hf#1{\hhf^{n_{#1}}}
\def\sn#1{S^{n_{#1}{-}1}}
\newcommand{\xgg}{(\wt X,g^1_0\oplus g^2_0)}
\def\scr{\scriptstyle}
\def\Ent{\mathop{\rm Ent}\nolimits}
\def\exp{\mathop{\rm exp}\nolimits}
\def\dim{\mathop{\rm dim}\nolimits}
\def\argch{\mathop{\rm Argcosh}\nolimits}
\def\ch{\mathop{\rm cosh}\nolimits}
\def\sh{\mathop{\rm sinh}\nolimits}
\def\ci{{\cal C}^\infty}
\def\limti{\dis\lim_{t\to +\infty}}
\def\pa{\partial}
\def\GB{Gau\ss-Bonnet }
\def\hyp{\mathop{\rm hyp}\nolimits}
\def\dvol{\mathop{\rm dvol}\nolimits}
\def\Ker{\mathop{\rm Ker}\nolimits}
\def\ric{\mathop{\rm Ricci}\nolimits}
\def\cX{\ \chi (X)}
\def\kg{\kappa (g)}
\def\can{\mathop{\rm can}\nolimits}
\def\vol{\mathop{\rm vol}\nolimits}
\def\det{\mathop{\hbox{\rm d\'et}}\nolimits}
\def\deg{\mathop{\hbox{\rm deg}}\nolimits}
\def\reg{\mathop{\rm reg}\nolimits}
\def\minvol{\mathop{\rm minvol}\nolimits}
\def\id{\mathop{\rm id}\nolimits}
\def\divv{\mathop{\rm div}\nolimits}
\def\Id{\mathop{\rm Id}\nolimits}
\def\Jac{\mathop{\rm Jac}\nolimits}
\def\conf{\mathop{\rm conf}\nolimits}
\def\Isom{\mathop{\rm Isom}\nolimits}
\def\barr{\mathop{\rm bar}\nolimits}
\def\tr{\mathop{\rm trace}\nolimits}
\def\PSO{\mathop{\rm PSO}\nolimits}
\def\PO{\mathop{\rm PO}\nolimits}
\def\top{\mathop{\rm top}\nolimits}
\def\bs{\overline{s}}
\def\bg{\overline{g}}
\def\bF{\overline{F}}
\def\btheta{\overline{\theta}}

\def\ol{\overline}
\def\vn{\vec\nabla}
\def\hn{\hhf^n}
\def\rn{\rrf^n}
\newcommand\zzf{\mathbf{Z}}
\providecommand{\cad}{c'est-\`a-dire }%
\newcommand\bc{\mathcal{B}}
\newcommand\vc{\mathcal{V}}
\newcommand\scc{\mathcal{S}}

\newcommand\mc{\mathcal{M}}
\newcommand{\ld}{, \ldots,}
\newcommand{\prodd}{\mathop{\prod}\limits}
\newcommand{\summ}{ \mathop{\sum}\limits}
\newcommand{\opp}{ \mathop{\bigoplus}\limits}
\newcommand{\cupp}{ \mathop{\bigcup}\limits}

\def\sumnj{\summ^n_{j=1}}
\def\ixg{\Isom(\wt X, \tilde g_0)}
\providecommand{\cf}{{\upshape cf. }}%

\providecommand{\resp}{{resp. }}%
\newcommand{\wt}{\widetilde}
\newcommand{\wh}{\widehat}

\providecommand{\la}{\longrightarrow}
\newcommand\hhf{\mathbf{H}}
\newcommand\rrf{\mathbf{R}}
\newcommand\hg{\mathfrak{h}}
\newcommand\ggg{\mathfrak{g}}
\newcommand{\dis}{\displaystyle}

\title{Uniform Growth of Groups Acting on Cartan-Hadamard Spaces}
\author{G. Besson, G. Courtois et  S. Gallot}

\maketitle
\centerline{\Large\textbf{Preliminary version}}

\section{Introduction}
In this paper we investigate the growth of finitely generated groups.
Given a group $\Gamma$ generated by a finite set $S$, the word length 
$l_S(\gamma)$ of an element $\gamma \in \Gamma$ is the smallest integer 
$m$ such that there exist elements $\sigma _1,\dots, \sigma _m$ in $S\cup S^{-1}$
with $\gamma = \sigma _1\dots\sigma _m$. The entropy of $\Gamma$ with respect to
the generating set $S$ is defined by 
\begin{equation}
\Ent_S (\Gamma) = \lim_{m\to \infty} \frac{1}{m}{(\log|\{\gamma \in \Gamma\,/\, l_S(\gamma)\leq m\}|)}\,.
\end{equation}
If  $\Ent_S (\Gamma) > 0 $ for some generating set $S$, it is true for all (finite) generating set and the group is said to have exponential growth. We now define the entropy of $\Gamma$ 
\begin{equation}
\Ent \Gamma = \inf _S \{ \Ent _S(\Gamma)\, /\, S\, \rm finite\, generating\, set\, of\, \Gamma\}\,.
\end{equation}
We say that $\Gamma$ has uniform exponential growth if $\Ent \Gamma  > 0$. In \cite{gr1}, remarque 
5.12, M. Gromov raised the question whether exponential growth always implies uniform exponential growth. The answer is negative, indeed,
in \cite{wi} J.S. Wilson gave examples of finitely generated groups of exponential growth and non uniform
exponential growth. Nevertheless,  exponential growth implies uniform
exponential growth for hyperbolic groups \cite{kou}, geometrically finite groups 
of isometries of Hadamard manifolds with pinched negative curvature \cite{a-n}, solvable groups \cite{os} and linear groups \cite{e-m-o}, \cite{Br-Ge}, \cite{Br}.
For further references see the exposition paper \cite{ha}.

We suppose that $(X,g)$ is a $n$-dimensional 
Cartan Hadamard manifold of pinched sectional curvature $-a^2 \leq K \leq -1$.
Our main result is the 
\begin{theo}
There exists a positive constant $C(n,a)$ such that for any finitely generated discrete group $\Gamma$ of isometries
of $(X,g)$, then either $\Gamma$ is virtually nilpotent or $\Ent(\Gamma) \geq C(n,a)$.
\end{theo}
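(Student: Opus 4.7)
The plan is to combine a Margulis-type lemma in pinched negative curvature with a quantitative ping-pong argument. Given any finite generating set $S$ of $\Gamma$, I aim to establish the following dichotomy: either $\Gamma$ is virtually nilpotent, or there exist two elements $u, v \in \Gamma$, each written as a word of length at most $L(n,a)$ in $S$, which generate a free sub-semigroup of rank~$2$. The second case immediately gives $\Ent_S(\Gamma) \geq (\log 2)/L(n,a)$, and taking $C(n,a) := (\log 2)/L(n,a)$ concludes independently of $S$.

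The starting ingredient is the Margulis lemma for Cartan--Hadamard manifolds with pinching $-a^2 \le K \le -1$: there exists $\epsilon = \epsilon(n,a) > 0$ such that for any discrete subgroup $\Gamma \subset \Isom(X,g)$ and any point $x_0 \in X$, the subgroup $\Gamma_\epsilon(x_0)$ generated by $\{\gamma \in \Gamma : d(x_0, \gamma x_0) < \epsilon\}$ is virtually nilpotent. If there exists $x_0$ at which every generator of $S$ moves $x_0$ by less than $\epsilon$, then $\Gamma \subset \Gamma_\epsilon(x_0)$ is virtually nilpotent and we are done. Otherwise, at every basepoint some bounded-length word in $S$ has displacement at least $\epsilon$.

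In this non-nilpotent case I would proceed in two steps. First, produce a \emph{hyperbolic} element $\alpha \in \Gamma$, of word length $\leq L_1(n,a)$ in $S$, whose translation length satisfies $\tau(\alpha) \geq \tau_0(n,a) > 0$. The candidate is a short product of generators of large displacement; one passes from displacement to translation length by taking a suitable power, while the elliptic/parabolic alternatives are excluded via Margulis (an elliptic of small order would lie in $\Gamma_\epsilon$, and a parabolic generates a subgroup all of whose elements eventually sit in $\Gamma_\epsilon$ along horospheres). Second, find a second element $\beta$ of bounded word length which does not preserve the axis of $\alpha$: if every $\sigma \in S$ preserved that axis, then $\Gamma$ would be virtually cyclic, contradicting the hypothesis that $\Gamma$ is not virtually nilpotent. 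A classical ping-pong argument on the Gromov boundary $\partial X$ then shows that $(\alpha^N, \beta\alpha^N\beta^{-1})$ generates a free sub-semigroup as soon as $N \geq N_0(n,a)$, since $\alpha^N$ and its $\beta$-conjugate have distinct attractive/repulsive fixed points and the basins of attraction on $\partial X$ become disjoint after $N_0$ iterations by standard comparison in curvature $\leq -1$.

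The principal obstacle is to extract all the quantitative constants $L_1(n,a)$, $\tau_0(n,a)$, $N_0(n,a)$ uniformly, depending only on the dimension and the pinching. Three points need careful handling: choosing a canonical basepoint $x_0$ adapted to $\Gamma$ (e.g.\ one realizing the minimum of a sum of displacements, or a Busemann-type barycenter) so that the bounded-length word with large displacement actually exists; verifying that this word, or a controlled power of it, is hyperbolic rather than elliptic of large order or parabolic, which is where a quantitative version of the Margulis alternative is essential; and giving effective separation estimates on $\partial X$ for the ping-pong step. Assembling these estimates into a single constant $C(n,a)$ is the technical heart of the proof, but each piece reduces to the Margulis lemma in variable pinched curvature and to standard Alexandrov comparison.
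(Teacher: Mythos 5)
Your overall architecture (Margulis lemma to reduce to the case $L(S)\geq\mu(n,a)$, then exhibit a free sub-semigroup on two words of bounded length) is the classical strategy, and your ping-pong step itself is sound --- it is essentially Proposition \ref{ping-pong} of the paper, which produces a free semi-group from two hyperbolic isometries with disjoint fixed-point sets and displacement $\geq\delta$, with $N$ depending only on $\delta$. The genuine gap is in your first step: you assert that in the non-virtually-nilpotent case one can always produce a \emph{hyperbolic} element $\alpha$ of word length $\leq L_1(n,a)$ with translation length $\tau(\alpha)\geq\tau_0(n,a)$. This is not justified, and it is precisely the point where the classical approach breaks down. Your parenthetical argument does not work: an elliptic element moving the basepoint $x_0$ by more than $\epsilon$ need not have small order and need not lie in $\Gamma_\epsilon(x_0)$ (its fixed-point set can be far from $x_0$), and a bounded product of such elliptics can again be elliptic, or hyperbolic with arbitrarily small translation length, even though $L(S)\geq\mu(n,a)$. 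The paper explicitly flags this: the case with elliptic elements is the ``important issue'', and Kapovich's ping-pong-only proof works only when $\Gamma$ has no elliptics.

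The paper's resolution, which your proposal is missing entirely, is a second, non-ping-pong mechanism for the bad case. When every word of length $\leq 4$ in the two chosen generators has displacement $\leq\delta=\log\cosh(L/4)$ (or when all generators are elliptic with pairwise common fixed points), one cannot hope for a uniformly hyperbolic short word; instead the paper proves (Propositions \ref{elliptic} and \ref{length-6}) that every element of word length $2$, respectively $6$, fails to be $(L,\eta)$-straight, i.e.\ $\rho(x_0,\gamma x_0)\leq(L-\eta)\,l_S(\gamma)+D$ for an explicit $\eta(L)>0$. It then converts this uniform ``sublinearity of the orbit map'' directly into the entropy bound $\Ent_S(G)\geq\eta$ via Theorem \ref{theo}, whose proof constructs an equivariant $c$-Lipschitz map $f_c:\mathcal{G}_S\to X$ from a Poincar\'e series and a convex minimization, for every $c>\Ent_S(G)+L-\eta$, and compares $c\geq L(S)$ against $\Ent_S(G)+L-\eta$. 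Without this second mechanism (or a proof that a short hyperbolic element of definite translation length always exists, which the paper does not claim and which is the hard open point in your sketch), your argument does not close.
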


\begin{remark}
The difficulty is here to show that one can choose the constant $C(n,a)$ not depending on the group $\Gamma$.
In the linear setting, E. Breuillard obtained the same kind of uniformity proving 
the existence of a positive constant $C(n)$ such that for any finitely generated group $\Gamma$
of $GL(n,K)$, $K$ any field, then either $\Gamma$ is virtually solvable or
$\Ent(\Gamma) \geq C(n)$.
\end{remark} 
The classical technique is to prove that "not too far" from any finite generating system one can exhibit a free group (in two generators). In this paper we do prove this in one of the cases under consideration, using the famous ping-pong lemma, however in the second case we use a different approach using natural Lipschitz maps from the Cayley graph into $X$. This is the new idea which is described in the following.

In a private communication M.~Kapovich mentioned to us a different proof in the case when $\Gamma$ acts without any elliptic element. One important issue in our proof is that we do not have this restriction, elliptic elements are permitted.

In the forthcoming paper \cite{BCG} we shall use this result to prove a Margulis lemma without curvature; indeed, we shall replace the curvature assumptions by hypothesis on the growth of the fundamental group. 
\section{Preliminaries}
Let $(X,g)$ be a n-dimensional Cartan-Hadamard manifold with sectional curvature 
$-a^2\leq K_g\leq -1$. Let us recall a few well-known facts about isometries. 
If $\gamma$ is an isometry of $(X,g)$, the displacement of $\gamma$ is defined by 
$l(\gamma )=\inf_{x\in X} \rho (x, \gamma x)$, where $\rho$ is the distance associated to the metric $g$ on $X$. 
We then have (see \cite{Ebe} p. 31):
\begin{enumerate}
\item The isometry $\gamma$ is called hyperbolic (or axial) if $l(\gamma )>0$, in which case there exists a 
geodesic $a_\gamma$, called the axis of $\gamma$, such that, for any $x\in a_\gamma$, 
$\rho (x, \gamma x)=l(\gamma)$.
\item The isometry is $\gamma$ is called parabolic if $l(\gamma )=0$ and $l(\gamma )$ is not achieved on $X$, 
in which case there exists a unique point  $\theta$ on the geometric boundary $\partial X$ of $X$ such that 
$\gamma \theta=\theta$.
\item The isometry $\gamma$ is called elliptic if $l(\gamma )=0$ and $l(\gamma )$ is achieved on $X$, 
in which case there exists a non empty convex subset $F_\gamma$ of $X$ such that, for any $x\in F_\gamma$, 
$\gamma x=x$.
\end{enumerate}

The following result, due to G.~Margulis, describes the structure of discrete subgroups of isometries generated 
by elements with small displacement.

\begin{theo}[G.~Margulis, \cite{Bur-Zal}]\label{margulis-lemma}
There exists a constant $\mu (n,a)>0$ such that if $\Gamma$ is a discrete subgroup of the isometry group 
of $(X,g)$, the subgroup $\Gamma_\mu$ of $\Gamma$ generated by,
$$S_\mu=\{ \gamma\in \Gamma / \rho (x, \gamma x)\leq \mu (n,a)\}\,,$$
is virtually nilpotent.
\end{theo}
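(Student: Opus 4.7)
The plan is to combine the classical Margulis commutator contraction with a pigeonhole argument on rotational parts, yielding nilpotency of a finite-index subgroup of $\Gamma_\mu$. Fix a basepoint $x\in X$. To any isometry $\gamma$ with displacement $\rho(x,\gamma x)\leq\epsilon$, associate its \emph{rotational part} $R_\gamma\in O(T_xX)\cong O(n)$, defined by parallel-transporting $d_x\gamma$ from $T_{\gamma x}X$ back to $T_xX$ along the minimizing geodesic from $x$ to $\gamma x$. The pair formed by $\rho(x,\gamma x)$ and $\|R_\gamma-\Id\|$ controls the position of $\gamma$ in $\Isom(X,g)$ near the identity, and both will have to be made small along generators.

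The geometric core is a quantitative commutator estimate. Using Jacobi field comparison and the pinching $-a^2\leq K\leq -1$, one expects to prove that for isometries $\alpha,\beta$ with $\max(\rho(x,\alpha x),\rho(x,\beta x))\leq\epsilon$ and $\max(\|R_\alpha-\Id\|,\|R_\beta-\Id\|)\leq\eta$, the commutator satisfies
\[
\rho(x,[\alpha,\beta]\,x)\leq C_1(n,a)\,(\epsilon+\eta)\,\eta,\qquad \|R_{[\alpha,\beta]}-\Id\|\leq C_2(n,a)\,\eta^2,
\]
with constants $C_1,C_2$ depending only on $n$ and $a$. Thus, whenever $\eta$ is small enough, commutation contracts both displacement and rotational part simultaneously.

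Generators with small rotational part are produced by pigeonhole. Since $\Gamma$ is discrete, the set $S_\mu=\{\gamma\in\Gamma\,/\,\rho(x,\gamma x)\leq\mu\}$ is finite. Covering $O(n)$ by $N=N(n,\eta/4)$ balls of radius $\eta/4$ partitions $S_\mu$ into at most $N$ classes of elements with nearby rotational parts; products $\alpha\beta^{-1}$ with $\alpha,\beta$ in the same class have rotational part within $\eta/2$ of $\Id$ and displacement at most $2\mu$. The subgroup $\Gamma'\leq\Gamma_\mu$ generated by all such products has index at most $N(n,a)$ in $\Gamma_\mu$. Iterating the commutator estimate on $\Gamma'$ yields a descending central series whose terms have displacements decaying geometrically; by discreteness of $\Gamma$ this series must stabilize at the trivial group after finitely many steps, so $\Gamma'$ is nilpotent of class bounded in terms of $n$ and $a$, and $\Gamma_\mu$ is virtually nilpotent.

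The main obstacle is establishing the quantitative commutator estimate with the correct bilinear dependence on $(\epsilon,\eta)$: the bound on the rotational part of $[\alpha,\beta]$ requires careful composition of parallel transports around a curvilinear quadrilateral together with repeated use of Rauch comparison to control the deviation of holonomy from the Euclidean case, and this is the step where the pinching constant $a$ genuinely enters the final $\mu(n,a)$. A subsidiary difficulty is to choose $\mu=\mu(n,a)$ and $\eta=\eta(n,a)$ small enough that the iteration contracts, while keeping $[\Gamma_\mu:\Gamma']$ bounded only in terms of $n$ and $a$. An alternative route is to invoke the Zassenhaus lemma directly on the Lie group $\Isom(X,g)$, but making the Zassenhaus neighborhood uniform in $n$ and $a$ essentially requires the same geometric input.
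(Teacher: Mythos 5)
The paper does not actually prove this statement: it is quoted as a theorem of Margulis with a reference to Burago--Zalgaller, so there is no internal proof to compare against. Your outline reconstructs the classical proof strategy --- commutator contraction in a Zassenhaus-type neighbourhood combined with a pigeonhole on rotational parts in $O(n)$ --- which is indeed the route taken in the cited literature, so the overall plan is the right one.

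As a proof, however, it has genuine gaps. First, the quantitative commutator estimate, which you yourself flag as the main obstacle, is only stated as something ``one expects to prove''; it is the entire analytic content of the theorem (this is where the pinching $-a^2\le K\le -1$ enters) and cannot be deferred. Second, the index bound is not justified as written: partitioning the generating set $S_\mu$ into $N$ rotational classes and taking the subgroup generated by the same-class quotients $\alpha\beta^{-1}$ does not by itself give a subgroup of index at most $N$ in $\Gamma_\mu$; a partition of a generating set into $N$ classes does not bound the index of the subgroup generated by same-class quotients. The standard remedy is to exploit that $\gamma\mapsto R_\gamma$ is an almost-homomorphism into the compact group $O(n)$ when displacements are small, and to run an induction on word length (a Jordan/Zassenhaus-type coset argument) to control the cosets; this is one of the delicate points of the classical proof. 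Third, ``iterating the commutator estimate on $\Gamma'$'' requires the estimate to hold for iterated commutators of \emph{arbitrary} elements of $\Gamma'$, whose displacements a priori grow under multiplication, so one must check that the contraction keeps everything inside the regime $\rho(x,\gamma x)\le\epsilon$ and $\|R_\gamma-\Id\|\le\eta$ where the estimate applies, and that stabilization of the series at the identity really follows from discreteness (which it does, but via the observation that a finite-order element of $O(n)$ close to $\Id$ is $\Id$). These are precisely the points where the full proofs in the literature spend their effort.
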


Given a set of isometries $S=\{\sigma_1, \dots, \sigma_p \}$ of $(X,g)$, we define the ``minimal
displacement'' of $S$ by 
\begin{definition}\label{0}
$L(S)= \inf_{x\in X}\max_{i=1,\dots p} \rho(x,\sigma_i x)$
\end{definition} 
When $\Gamma$ is a finitely generated discrete subgroup of the isometry group of $(X,g)$,
the above theorem \ref{margulis-lemma} has the following
\begin{corollary}\label{margulis-lemma2}
There exists a constant $\mu (n,a)>0$ such that if $\Gamma$ is a finitely generated not virtually nilpotent
discrete subgroup of isometry of $(X,g)$ 
and $S=\{\sigma_1, \dots, \sigma_p \}$ a finite generating set of $\Gamma$, then
$$L(S)\geq \mu (n,a)\}\,.$$
\end{corollary}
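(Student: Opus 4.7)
The plan is to prove the corollary by contraposition, directly invoking Theorem \ref{margulis-lemma}. Concretely, I would assume that $L(S)<\mu(n,a)$ (where $\mu(n,a)$ is the Margulis constant furnished by Theorem \ref{margulis-lemma}) and then deduce that $\Gamma$ is virtually nilpotent, contrary to hypothesis.

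First I would unpack the definition of $L(S)$. Since $L(S)=\inf_{x\in X}\max_{i}\rho(x,\sigma_i x)$ and $L(S)<\mu(n,a)$, there exists a basepoint $x_0\in X$ such that
\begin{equation*}
\max_{1\leq i\leq p}\rho(x_0,\sigma_i x_0)<\mu(n,a).
\end{equation*}
This is the only slightly subtle point: $L(S)$ is an infimum, possibly not attained, so one must pick $x_0$ so that the max is within a small margin of $L(S)$ and still strictly below $\mu(n,a)$. Once such an $x_0$ is fixed, every generator $\sigma_i$ satisfies $\rho(x_0,\sigma_i x_0)\leq\mu(n,a)$, so $\sigma_i$ belongs to the set $S_\mu$ associated with $x_0$ in Theorem \ref{margulis-lemma}.

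Next I would observe that $\{\sigma_1,\dots,\sigma_p\}\subseteq S_\mu$ forces $\Gamma=\langle\sigma_1,\dots,\sigma_p\rangle\subseteq\langle S_\mu\rangle=\Gamma_\mu$. By Theorem \ref{margulis-lemma}, the group $\Gamma_\mu$ is virtually nilpotent; since any subgroup of a virtually nilpotent group is virtually nilpotent, so is $\Gamma$. This contradicts the assumption that $\Gamma$ is not virtually nilpotent and therefore $L(S)\geq\mu(n,a)$.

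There is no genuine difficulty in the argument — it is essentially just a reformulation of the Margulis lemma in terms of the joint minimal displacement of a generating system. The only care required is the basepoint-choice described above, to replace the $\inf$ in the definition of $L(S)$ by an actual point where the generators move by less than $\mu(n,a)$; after that the inclusion $\Gamma\subseteq\Gamma_\mu$ is immediate and Theorem \ref{margulis-lemma} closes the argument.
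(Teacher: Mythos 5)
Your argument is correct and is exactly the derivation the paper intends (the paper omits the proof, treating the corollary as an immediate consequence of Theorem \ref{margulis-lemma}): if $L(S)<\mu(n,a)$ one picks a basepoint $x_0$ realizing the infimum within the required margin, all generators then lie in $S_\mu$, so $\Gamma\subseteq\Gamma_\mu$ is virtually nilpotent. No issues.
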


In the following lemma we describe the structure of virtually nilpotent discrete subgroups of isometries of 
$(X,g)$. Here by discrete we mean that the orbits are discrete sets in $(X,g)$.
\begin{lemma}\label{virtually-nilpotent}
Let $G$ be a discrete virtually nilpotent group of isometries of $(X,g)$.
\begin{enumerate}
\item[a)] If $G$ contains an hyperbolic element $\gamma$, then $G$ preserves the axis of $\gamma$.
\item[b)] If $G$ contains a parabolic element $\gamma$ with fixed point $\theta\in \partial X$, then $G$ 
fixes the point $\theta$.
\item[c)] If all elements of $G$ are elliptic, then $G$ is finite.
\end{enumerate}
\end{lemma}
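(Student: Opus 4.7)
My plan is to deduce all three parts from a single dichotomy for discrete isometry groups of a pinched negatively curved Cartan-Hadamard manifold: either the group is \emph{elementary}, meaning its limit set $\Lambda(G) \subset \partial X$ consists of at most two points, or it contains a non-abelian free subgroup, produced by a ping-pong argument applied to neighbourhoods of two points of $\Lambda(G)$ lying in distinct $G$-orbits. Since a virtually nilpotent group contains no non-abelian free subgroup, $G$ is forced to be elementary in each of the three situations.

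For part (a), the hyperbolic element $\gamma$ places its endpoints $\gamma^{\pm}$ in $\Lambda(G)$, hence elementarity gives $\Lambda(G)=\{\gamma^+,\gamma^-\}$. Since $G$ preserves $\Lambda(G)$, it preserves the unique geodesic joining $\gamma^+$ to $\gamma^-$, which is exactly $a_\gamma$. For part (b), the parabolic $\gamma$ places its fixed point $\theta$ in $\Lambda(G)$. If some $g\in G$ moved $\theta$, then $\gamma$ and $g\gamma g^{-1}$ would be two parabolics with distinct fixed points, and ping-pong on disjoint horoballs centred at $\theta$ and $g\theta$ (using sufficiently high powers, whose dynamics compress each horoball into the other thanks to the upper curvature bound $-1$) would yield a non-abelian free subgroup, contradicting virtual nilpotency. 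Hence $g\theta=\theta$ for all $g\in G$.

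For part (c), I would argue by induction on $\dim X$ that a discrete virtually nilpotent group of elliptic isometries is finite. Passing to a finite-index nilpotent subgroup $N$ of $G$, it is enough to prove $N$ finite. The key geometric input is the standard fact that two commuting elliptic isometries of a Cartan-Hadamard manifold have a common fixed point: if $n$ commutes with $z$ then the nearest-point projection of any $z$-fixed point onto the convex set $F_n$ is $z$-invariant, producing a point in $F_n\cap F_z$. Iterating, the finitely generated abelian group $Z(N)$ has a non-empty common fixed set $F_{Z(N)}$, a totally geodesic submanifold of $X$, which is preserved by $N$ because $Z(N)$ is central; the same projection argument shows each $n\in N$ restricts to $F_{Z(N)}$ as an elliptic isometry. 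When $Z(N)\neq\{1\}$, $F_{Z(N)}$ has strictly smaller dimension than $X$, so by induction the restriction $N|_{F_{Z(N)}}$ is finite. The kernel of restriction sits in the compact orthogonal group of the normal bundle over a chosen point of $F_{Z(N)}$; discreteness of $N$-orbits then forces this kernel to be finite, because otherwise the identity component of its closure would produce positive-dimensional, non-discrete orbits in $X$. Hence $N$ is finite.

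The main technical obstacle I anticipate is the verification that every $n\in N$ restricts to an elliptic isometry of $F_{Z(N)}$, without which the inductive step in (c) does not close; the nearest-point-projection trick for commuting isometries handles precisely this point. The dichotomy between elementary and free-subgroup-containing discrete groups invoked in parts (a) and (b) is a standard consequence of the convergence-group dynamics on $\partial X$ under pinched negative curvature, so the real content lies in ruling out the non-elementary case via virtual nilpotency.
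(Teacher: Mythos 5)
Your proof is correct, and part (c) is essentially the paper's own argument: pass to a finite-index nilpotent subgroup $N$, descend through totally geodesic fixed-point sets of central elliptic elements until $N$ is seen to fix a point, then use discreteness to conclude finiteness. For parts (a) and (b) you take a more packaged route, invoking the general dichotomy ``elementary or contains a non-abelian free subgroup'' for discrete groups on pinched Hadamard manifolds, where the paper argues by hand. The one configuration this dichotomy silently absorbs, and which the paper must treat explicitly, is an element $\gamma'$ fixing one endpoint of $a_\gamma$ while moving the other: ping-pong is unavailable there (the conjugate axis shares an endpoint with $a_\gamma$), and the paper rules it out by producing the sequence $(\gamma'\gamma\gamma'^{-1})^{-N}\gamma^{N}x$ of distinct points converging in $X$, contradicting discreteness. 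Your appeal to the dichotomy covers this case, but only because the proof of that dichotomy contains exactly this discreteness argument; the paper's version is self-contained. Two small points on (c): the lemma does not assume $G$ finitely generated, so your assertion that $Z(N)$ is finitely generated is not justified in general --- this is harmless, since a single nontrivial central element suffices to launch the descent, which is precisely what the paper does; and your inductive hypothesis needs the restricted group to be discrete on $F_{Z(N)}$, which holds because its orbits are orbits of $N$. Your explicit treatment of the restriction kernel (a discrete group fixing a point, hence finite) makes precise a step the paper leaves implicit.
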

\begin{proof}
a) Let $\gamma\in G$ be an hyperbolic element and $\theta , \zeta \in \partial X$, the end points of the axis 
$a_\gamma$ of $\gamma$. We claim that for any $\gamma' \in G$, then 
$\gamma' (\{ \theta, \zeta\})=\{ \theta, \zeta\}$ or $\gamma' (\{ \theta, \zeta\})\cap\{ \theta, \zeta\}=\emptyset$. 
Indeed assume that $\gamma' (\{ \theta, \zeta\})\cap\{ \theta, \zeta\}=\{\theta \}$. The isometry 
$\gamma'\gamma \gamma'^{-1}$ is hyperbolic with axis
$a_{\gamma'\gamma \gamma'^{-1}}=\gamma' a_\gamma$ equal to the geodesic joining $\theta$ and $\zeta'\ne \zeta$, 
where   $\gamma' (\{ \theta, \zeta\})=\{ \theta, \zeta'\}$. We may assume that $\theta$ is the attractive fixed 
point of $\gamma$ and $\gamma'\gamma \gamma'^{-1}$ (replacing them by their inverse if necessary). Let 
$x\in a_\gamma$, then $(\gamma'\gamma \gamma'^{-1})^{-N}\gamma^Nx$ is a sequence of 
pairwise distinct points which converges to a point on the axis $a_{\gamma'\gamma \gamma'^{-1}}$ of 
$\gamma'\gamma \gamma'^{-1}$. This contradicts the discreteness of $G$, proving thus the claim. 

Now, if there exist $\gamma'\in G$ such that $\gamma' (\{ \theta, \zeta\})\cap\{ \theta, \zeta\}=\emptyset$, the two hyperbolic isometries $\gamma$ and  $\gamma'\gamma \gamma'^{-1}$ would then have disjoint axis and therefore $G$ would contain a free subgroup by a classical ping-pong argument. This would contradict the fact that $G$ is virtually nilpotent. Consequently, for any $\gamma'\in G$,  $\gamma' (\{ \theta, \zeta\})=\{ \theta, \zeta\}$, which shows that $G$ preserves the geodesic joining $\theta$ and $\zeta$.

b) Let $\gamma\in G$ be a parabolic element, and $\theta\in \partial X$ its fixed point. If there exist $\gamma'\in G$ such that $\gamma'\theta\ne \theta$, then $\gamma$ and $\gamma'\gamma \gamma'^{-1}$ would be to parabolic elements in $G$ with distinct fixed point $\theta$ and $\gamma'\theta$ respectively. By a ping-pong argument, $G$ would then contain a free subgroup, which contradicts the fact that $G$ is virtually nilpotent. Thus $G$ fixes $\theta\in\partial X$.

c) Let us now assume that all elements in $G$ are elliptic. Let $N\subset G$ be a nilpotent subgroup of $G$ with finite index. If $N=\{ e\}$, then $G$ is finite. We thus assume that $N\ne\{ e\}$, the center $Z(N)$ of $N$ is then not trivial. For $g_1\in Z(N)\setminus \{ e\}$ let us denote $F_{g_1}\subset X$ the set of fixed points of $g_1$. Let $x_1\in F_{g_1}$; by commutation of $g_1$ and $\exp_{x_1}$, we have $F_{g_1}=\exp_{x_1}(E_1)$, where $E_1$ is the eigenspace of $d_{x_1}g_1$ corresponding to the eigenvalue $+1$. This shows that $F_{g_1}$ is a totally geodesic submanifold of $X$ satisfying $\dim (F_{g_1})<\dim (X)$, since $g_1\ne e$. As every $\gamma\in N$ commutes with $g_1$, it satisfies $\gamma (F_{g_1})=F_{g_1}$.

Let $N_1$ be the subgroup of $\Isom (F_{g_1})$ obtained by restriction to $F_{g_1}$ of the elements of $N$; it is clearly nilpotent as the image of a nilpotent group. For $\gamma \in N$, the projection on $F_{g_1}$ of any fixed point of $\gamma$ is again a fixed point of $\gamma$; consequently, the elements of $N_1$ are elliptic elements of $\Isom (F_{g_1})$.

If $N_1=\{ e\}$, then $F_{g_1}$ is pointwise fixed by $N$, therefore $N$ is finite (the group is discrete and all elements have a common fixed point).

If $N_1\ne \{ e\}$, we may iterate the process. Indeed, let us suppose that we have constructed the totally geodesic submanifold $F_{g_i}$, we then construct $N_i$ as the set of restrictions of elements of $N$ to $F_{g_i}$, and, either $N_i=\{ e\}$ in which case $N$ is finite, or $N_i$ is not trivial and, choosing $g_{i+1}\in Z(N_i)\setminus \{ e\}$, we construct the totally geodesic submanifold $F_{g_{i+1}}\in F_{g_i}$ such that $\dim (F_{g_{i+1}})< \dim (F_{g_i})$. This process stops for some $i_0\leq n$ and then $N_{i_0}=\{ e\}$ and $F_{g_{i_0}}$ is pointwise fixed by $N$ and not empty.
\end{proof}

\begin{lemma}\label{fixed-point}
Let $\Gamma$ be a finitely generated discrete group of isometries of $(X,g)$. 

(i) If there exist a point $\theta\in \partial X$ fixed by $\Gamma$, then $\Gamma$ is virtually nilpotent.

(ii) If $\Gamma$ preserves a geodesic in $X$, then $\gamma$ is virtually cyclic.
\end{lemma}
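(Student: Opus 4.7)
For \textbf{(i)}, my plan is to distinguish two cases according to whether $\Gamma$ contains a hyperbolic element. If it does---say $\gamma$ is hyperbolic with axis endpoints $\theta$ and $\zeta$---then I will show $\Gamma$ also fixes $\zeta$, hence preserves the axis $a_\gamma$, and conclude via (ii) that $\Gamma$ is virtually cyclic (in particular virtually nilpotent). Since every $\gamma'\in\Gamma$ already fixes $\theta$, the only possibility to exclude is $\gamma'\zeta=\zeta'\ne\zeta$; in that case $\gamma'\gamma\gamma'^{-1}$ is hyperbolic sharing with $\gamma$ only the attracting endpoint $\theta$ (after replacing by inverses if needed), and the dynamical argument from Lemma 2.3(a)---tracking the sequence $(\gamma'\gamma\gamma'^{-1})^{-N}\gamma^N x$ for $x\in a_\gamma$, which produces infinitely many pairwise distinct points accumulating on $\gamma' a_\gamma$---contradicts discreteness of $\Gamma$.

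If instead $\Gamma$ contains no hyperbolic element, then every $\sigma\in\Gamma$ is parabolic or elliptic with $\sigma\theta=\theta$, and hence has vanishing Busemann cocycle at $\theta$ (for elliptic $\sigma$, because $\sigma$ has a fixed point in $X$; for parabolic $\sigma$, by definition). I then fix a unit-speed geodesic ray $c\colon[0,\infty)\to X$ with $c(\infty)=\theta$; the ray $\sigma\circ c$ is a unit-speed ray asymptotic to $c$ at the same Busemann height, and the standard convergence property in curvature $K\leq -1$ gives $\rho(c(t),\sigma c(t))\to 0$ as $t\to\infty$. Given a finite generating set $S=\{\sigma_1,\ldots,\sigma_p\}$ of $\Gamma$, I then choose $t$ large enough that $\rho(c(t),\sigma_i c(t))\leq\mu(n,a)$ for every $i$; applying Theorem 2.1 at the base point $x=c(t)$, $S\subset S_\mu$, so $\Gamma=\langle S\rangle\subset\Gamma_\mu$ is virtually nilpotent.

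For \textbf{(ii)}, I will study the restriction homomorphism $r\colon\Gamma\to\Isom(a)\cong\rrf\rtimes\zzf/2\zzf$. Its kernel $K$ consists of elliptic elements fixing $a$ pointwise, which lie in the compact stabilizer in $\Isom(X)$ of any point of $a$; being discrete in this compact group, $K$ is finite. The image $r(\Gamma)$ acts on $a$ with orbit $r(\Gamma)\cdot x=\Gamma\cdot x$ discrete, which forces $r(\Gamma)$ itself to be discrete in $\Isom(\rrf)$ (a subgroup whose translation part were dense in $\rrf$ would have dense orbits). The discrete subgroups of $\rrf\rtimes\zzf/2\zzf$ are either finite or virtually $\zzf$, so $\Gamma$ is a finite extension of a virtually cyclic group, hence itself virtually cyclic.

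The delicate step is the dynamical argument in the hyperbolic case of (i), which essentially reuses the proof of Lemma 2.3(a). The remaining pieces---asymptotic convergence of unit-speed rays sharing a boundary point at equal Busemann height, and the Margulis lemma---are standard tools that combine cleanly once the hyperbolic case has been dispatched.
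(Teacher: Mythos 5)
Your proof is correct and follows essentially the same route as the paper: the same discreteness/dynamics argument from Lemma~\ref{virtually-nilpotent}~a) to show that the second endpoint of the axis is also fixed in the hyperbolic case, horosphere preservation plus the Margulis lemma (Theorem~\ref{margulis-lemma}) in the non-hyperbolic case (you merge the paper's parabolic and all-elliptic subcases, which is harmless), and the restriction homomorphism to $\Isom(a)$ with finite kernel for (ii). Your write-up of (ii) is a bit more detailed than the paper's one-line reduction to case 1 of (i) --- in particular you justify discreteness of the image in $\Isom(\rrf)$ --- but the substance is the same.
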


\begin{proof} Proof of (i).
There are three cases: 1) there is an hyperbolic element in $\Gamma$, 2) there is no hyperbolic element, but there is a parabolic element in $\Gamma$ and 3) all elements in $\Gamma$ are elliptic.

1) Let $\gamma$ be a hyperbolic element in $\Gamma$, and $a_\gamma$ its axis. One of the endpoints of $a_\gamma$ is $\theta$. As $\Gamma$ is discrete, it follows from the argument in the proof of lemma \ref{virtually-nilpotent} a) that for any $\gamma'\in \Gamma$, $\gamma' (\{ \theta, \zeta\})=\{ \theta, \zeta\}$ or $\gamma' (\{ \theta, \zeta\})\cap\{ \theta, \zeta\}=\emptyset$, where $\zeta$ is the other  endpoint of $a_\gamma$. Therefore, 
$\gamma' (\{ \theta, \zeta\})=\{ \theta, \zeta\}$ and $\gamma' (\theta )=\theta$ and $\gamma' (\zeta)= \zeta$. The group $\Gamma$ preserves $a_\gamma$. Let us note that $\Gamma$ does not contain any parabolic element, since such an element would fix $\theta$ and therefore also $\zeta$ which is impossible. The elements in $\Gamma$ are thus either hyperbolic or elliptic.

Now, the projection on $a_\gamma$ being distance decreasing, any element $\gamma'\in \Gamma$ achieves its displacement $l(\gamma')$ on the axis $a_\gamma$, and $\gamma'$ is elliptic (resp. hyperbolic) iff $l(\gamma')=0$ (resp. $l(\gamma')\ne 0$). Moreover, since $\gamma'(\theta )=\theta$, any elliptic element fixes pointwise the axis $a_\gamma$. The restriction to the axis $a_\gamma$ is thus a morphism from $\Gamma$ into the group of translations of the axis, whose kernel is the set of elliptic elements, which fix all points of $a_\gamma$ and hence is finite. The group $\Gamma$ is then virtually abelian.

2) In this case the elements of $\Gamma$ are either elliptic or parabolic with fixed point $\theta$. 
In particular, every element of $\Gamma$ preserves each horospheres centred at $\theta$. Indeed, 
this is clear for parabolic elements.
Any elliptic element $\gamma'$ fixes some point $x\in X$, and hence the whole geodesic $c$ 
joining $x$ to $\theta$; let $H$ be any horosphere centred at $\theta$ and $y$ be its 
intersection with $c$, then $\gamma'$ maps $H$ onto the horosphere centred at $\gamma'(\theta )=\theta$ 
containing $\gamma'(y)=y$. This shows that $\gamma'(H)=H$. 

Let $S=\{ \sigma_1, \dots, \sigma_p\}$ be a generating set of $\Gamma$, by the above discussion, 
$\inf_{x\in X}\max_{i\in \{1,\cdots, p\}}\rho (x, \gamma x)=0$. In fact, for any geodesic $c$ such that 
$c(+\infty )=\theta$, let $H_t$ be the horosphere centred at $\theta$ and containing $c(t)$. 
The orthogonal projection from $H_t$ to $H_{t+t'}$ contracts distances, we then get that 
$\rho (c(t), \gamma'(c(t))$ decreases to zero when $t$ goes to infinity, for any 
$\gamma'\in \Gamma$. The group $\Gamma$ is then virtually nilpotent by theorem \ref{margulis-lemma}. 

3) If $\Gamma$ only contains elliptic elements, then for any finite generating set 
$S=\{ \sigma_1, \dots, \sigma_p\}$,  $\inf_{x\in X}\max_{i\in \{1,\cdots, p\}}\rho (x, \sigma_i x)=0$, 
because each $\sigma_i$ preserves each horosphere centred at $\theta$, by the above argument. 
The group $\Gamma$ is again virtually nilpotent.

Proof of (ii).
A subgroup of index two of $\Gamma$ fixes each endpoint of the globally preserved geodesic.
Then we conclude as in the case 1 of (i).
\end{proof}

For any two isometries $\gamma\,,\gamma'$ acting on $(X,g)$ we define,
$$L(\gamma, \gamma')=\inf_{x\in X}\max \{ \rho (x, \gamma x), \rho (x, \gamma' x)\}\,.$$
We now prove the,
\begin{proposition}\label{not-nilpotent}
Let $\Gamma$ be a finitely generated discrete subgroup of $\Isom (X,g)$, where $(X,g)$ is a 
Cartan-Hadamard manifold of sectional curvature $-a^2\leq K_g\leq -1$. Let  
$S=\{ \sigma_1, \dots, \sigma_p\}$ be a finite generating set of $\Gamma$. 
If $\Gamma$ is not virtually nilpotent, we have 
\begin{enumerate}
\item[i)] either there exist $\sigma_i,\sigma_j\in S$ such that the subgroup $<\sigma_i, \sigma_j >$ 
generated by these two elements is not virtually nilpotent and  $L(\sigma_i, \sigma_j )\geq \mu (n,a)$,
\item[ii)] or all $\sigma_i$ in $S$ are elliptic and for all $\sigma_i\ne \sigma_j\in S$, either 
$<\sigma_i, \sigma_j >$ fixes some point in $X$ and is finite, or it fixes a point $\theta\in \partial X$,
\item[iii)] or there exist $\sigma_i, \sigma_j, \sigma_k\in S$ such that 
$L(\sigma_i\sigma_j, \sigma_k )\geq \mu (n,a)$ and the group $<\sigma_i\sigma_j, \sigma_k>$ is not 
virtually nilpotent.
\end{enumerate}
\end{proposition}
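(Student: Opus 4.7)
The plan is to assume (i) fails and derive either (ii) or (iii). The starting observation is that if $L(\sigma_i,\sigma_j)<\mu(n,a)$, then at a base point $x$ almost realizing the infimum, both $\sigma_i$ and $\sigma_j$ belong to $S_\mu$, so $\langle\sigma_i,\sigma_j\rangle$ sits inside the virtually nilpotent group $\Gamma_\mu$ supplied by Theorem~\ref{margulis-lemma} and is itself virtually nilpotent. Hence failure of (i) forces every pair $\langle\sigma_i,\sigma_j\rangle$ to be virtually nilpotent.

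The next step is to rule out non-elliptic generators. If some $\sigma_i$ is hyperbolic, Lemma~\ref{virtually-nilpotent}(a) applied to each virtually nilpotent pair $\langle\sigma_i,\sigma_j\rangle$ forces every $\sigma_j$ to preserve the axis of $\sigma_i$, so $\Gamma$ preserves that axis and Lemma~\ref{fixed-point}(ii) makes $\Gamma$ virtually cyclic, contradicting the hypothesis. If some $\sigma_i$ is parabolic with fixed point $\theta\in\partial X$, then each pair $\langle\sigma_i,\sigma_j\rangle$ contains no hyperbolic element (a parabolic isometry fixes only one boundary point and so cannot preserve the two distinct endpoints of any axis), and Lemma~\ref{virtually-nilpotent}(b) makes the pair fix $\theta$; hence $\Gamma$ fixes $\theta$ and Lemma~\ref{fixed-point}(i) again gives a contradiction. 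So every generator must be elliptic.

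Now I would suppose (ii) also fails and produce (iii). Some pair $\sigma_i,\sigma_j$ (both elliptic, with $\langle\sigma_i,\sigma_j\rangle$ virtually nilpotent) fixes no point of $X\cup\partial X$. This pair has no parabolic element, since Lemma~\ref{virtually-nilpotent}(b) would then produce a fixed boundary point; nor are its elements all elliptic, since Lemma~\ref{virtually-nilpotent}(c) would make the finite-index nilpotent subgroup $N$ pointwise fix some submanifold $F\subset X$, and the Cartan barycenter of the finite orbit of any $x\in F$ under the whole pair would be a fixed point in $X$. Therefore the pair contains a hyperbolic element $\gamma$ with axis $L$, and by Lemma~\ref{virtually-nilpotent}(a) preserves $L$. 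Each elliptic isometry preserving $L$ restricts to $L$ as either the identity or a reflection; if either $\sigma_i$ or $\sigma_j$ restricted to $L$ were the identity, every word in them would restrict to $L$ as identity or reflection, so $\langle\sigma_i,\sigma_j\rangle$ could not contain a hyperbolic element with axis $L$. Hence both act as reflections at distinct points of $L$, so $\sigma_i\sigma_j$ restricts to $L$ as a nontrivial translation and, by convexity of the displacement function, is hyperbolic with axis $L$.

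To close out (iii), some generator $\sigma_k$ must fail to preserve $L$; otherwise $\Gamma$ itself preserves $L$ and is virtually cyclic by Lemma~\ref{fixed-point}(ii), a contradiction. Then $\langle\sigma_i\sigma_j,\sigma_k\rangle$ cannot be virtually nilpotent: it contains the hyperbolic element $\sigma_i\sigma_j$, so by Lemma~\ref{virtually-nilpotent}(a) it would have to preserve $L$, forcing $\sigma_k$ to do likewise. A final application of the Margulis argument from the first paragraph then yields $L(\sigma_i\sigma_j,\sigma_k)\ge\mu(n,a)$. I expect the main obstacle to be the third paragraph: extracting the hyperbolic element from a virtually nilpotent pair that fixes nothing in $X\cup\partial X$, and then pinning down that both generators must reflect the resulting axis at distinct points — this is the step where the structural lemmas on virtually nilpotent groups of isometries do the real work.
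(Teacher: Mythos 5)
Your proof is correct and follows essentially the same route as the paper's: reduce via the Margulis lemma to the situation where every pair of generators spans a virtually nilpotent group, eliminate hyperbolic and parabolic generators using Lemmas \ref{virtually-nilpotent} and \ref{fixed-point}, and, in the all-elliptic case, extract the hyperbolic product $\sigma_i\sigma_j$ as a composition of two reflections of a preserved axis before playing the same game against the remaining generators. Your treatment of the subcase where one generator restricts to the identity on the axis is in fact slightly more explicit than the paper's, but the argument is otherwise identical.
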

\begin{proof}
There are again three cases: a) there is a hyperbolic element in $S$, say $\sigma_1$; b) there is no 
hyperbolic element and there is a parabolic element in $S$, say $\sigma_1$; c) all $\sigma_i$'s in $S$ are elliptic.

a) Let us assume that $\sigma_1$ is hyperbolic. Let us consider all pairs $(\sigma_1, \sigma_i)$ with 
$i=2,\dots , p$, and let us assume that $L(\sigma_1, \sigma_i )<\mu (n,a)$ for $i=2,\dots , p$. 
The groups $<\sigma_1, \sigma_i>$ are then virtually nilpotent. By lemma \ref{virtually-nilpotent} a), every $\sigma_i$ preserves the axis $a_{\sigma_1}$ of $\sigma_1$, hence $\Gamma$ preserves $a_{\sigma_1}$ and is virtually nilpotent contradicting the assumption. Then there exist $\sigma_i\in S$ such that $L(\sigma_1, \sigma_i)\geq \mu (n, a)$ and $<\sigma_1, \sigma_i>$ is non virtually nilpotent.

b) Assume that $\sigma_1$ is parabolic with fixed point $\theta \in \partial X$. Let us consider all pairs 
$(\sigma_1, \sigma_i)$, $i=2,\dots , p$, and assume that $<\sigma_1, \sigma_i>$ is virtually nilpotent 
(or that $L(\sigma_1, \sigma_i)<\mu (n,a)$), for all  $i=2,\dots , p$). By lemma \ref{virtually-nilpotent} b), 
$\sigma_i$ fixes the point $\theta \in \partial X$, therefore $\Gamma$ fixes $\theta$ and is virtually nilpotent, 
by lemma \ref{fixed-point}, a contradiction. Consequently, if 
$\sigma_1$ is parabolic, there exist $\sigma_i\neq \sigma_1$ such that $L(\sigma_1, \sigma_i)\geq \mu (n,a)$.

c) Let us assume that all $\sigma_i$'s are elliptic, for $i=2,\dots , p$, and that for all pairs 
$(\sigma_i, \sigma_j)$ the groups $<\sigma_i, \sigma_j>$ are virtually nilpotent (or that 
$L(\sigma_i, \sigma_j)<\mu (n,a)$). Let us denote $G= <\sigma_i, \sigma_j>$. There are again three cases: 
1) there is a hyperbolic element in $G$, 2) there is no hyperbolic element and there is a parabolic element in 
$G$, 3) all elements in $G$ are elliptic.

In the case 1), let $\gamma$ be a hyperbolic element in $G$ with axis $a_\gamma$. 
By lemma \ref{virtually-nilpotent} a), $G$ preserves $a_\gamma$. Since $\sigma_i, \sigma_j$ are elliptic, 
they fix points $x_i$ 
and $x_j$ (respectively) on $a_\gamma$ (recall that the displacement of $\sigma_i$ and $\sigma_j$ 
are achieved on $a_\gamma$ by the distance decreasing property of the projection onto $a_\gamma$). 
If $x_i=x_j$, the $G$ fixes $x_i$ and it is thus finite. Let us now suppose that 
$\sigma _{i}$ and $\sigma _{j}$ do not fix the same point on $a_{\gamma}$, that is $x_i\ne x_j$
and none of the restriction $\tilde \sigma_i$ and $\tilde \sigma_j$
of $\sigma _{i}$ and $\sigma _{j}$ to $a_\gamma$ is the identity. 
In that case, $\tilde\sigma_i$ and $\tilde\sigma_j$ 
are both symmetries around $x_i$ and $x_j$, and then $\sigma_i\sigma_j$ is a hyperbolic element with axis 
$a_\gamma$. Let us then consider $<\sigma_i\sigma_j, \sigma_l>$ 
for $l=1,\dots,p$. Assume that for all $l=1,\dots,p$, $L(\sigma_i\sigma_j,\sigma_l)<\mu (n,a)$, the groups 
$<\sigma_i\sigma_j, \sigma_l>$ are then virtually nilpotent, and by lemma \ref{virtually-nilpotent} a), all 
$\sigma_l$'s preserve $a_\gamma$ and hence $\Gamma$ preserves  $a_\gamma$ and is thus virtually nilpotent 
which is a contradiction. Therefore, there exist $\sigma_k\in S$ such that 
$L(\sigma_i\sigma_j,\sigma_k)\geq\mu (n,a)$ and that $<\sigma_i\sigma_j,\sigma_k>$ is not virtually nilpotent.

In the case 2), let $\gamma\in G$ be a parabolic element with fixed point $\theta\in \partial X$. 
By lemma \ref{virtually-nilpotent} b), $G$ fixes $\theta$.

In the case 3), all elements in $G$ are elliptic and by lemma \ref{virtually-nilpotent} c), $G$ is finite.
This ends the proof of the proposition.
\end{proof}

\section{Algebraic length and $\eta$-straight isometries}

Let $\Gamma$ be a finitely generated discrete group of isometries of $(X,g)$ and 
$S=\{ \sigma_1, \dots, \sigma_p\}$ be 
a finite generating set of $\Gamma$. 

Let us denote $l_S$ and $d_S$ the length and distance on the Cayley graph associated to $S$. 
Let $x_0$ be a point in $X$ 
and define $L=\max_{i\in \{1,\dots, p\}}\rho (x_0,\sigma_ix_0)$.

For any $\gamma\in\Gamma$ it follows from the triangle inequality that
\begin{equation}
\rho (x_0,\gamma x_0)\leq  l_S(\gamma)L\,.
\end{equation} 
Let $\eta$ be a positive number such that $0<\eta<L$.
\begin{definition}
An isometry $\gamma$ of $\Gamma$ is said to be $(L,\eta )$-straight if $\rho(x_0, \gamma x_0)\geq (L-\eta )l_S(\gamma )$.
\end{definition}
\begin{remark}
Notice that the above definition depends on the choice of $x_0$ and of a generating set $S$.
\end{remark}
When $\Gamma$ is a finitely generated discrete group, for any finite generating set 
$S=\{ \sigma_1, \dots, \sigma_p\}$ 
we define,
$$L(S)=\inf_{x\in X}\max_{i\in\{ 1,\dots,p\}}\rho(x,\sigma_ix)\,.$$
When $\Gamma$ is not virtually nilpotent, by theorem \ref{margulis-lemma}, for any finite generating set $S$, 
$L=L(S)\geq \mu (n,a)>0$, where $\mu (n,a)$ is the Margulis constant. We then have,
\begin{lemma}\label{x0}
Let $\Gamma$ be a finitely generated non virtually nilpotent discrete group of isometries of $(X,g)$. 
For any finite 
generating set $S=\{ \sigma_1, \dots, \sigma_p\}$ of $\Gamma$, there exist $x_0\in X$ such that,
$$L(S)=\inf_{x\in X}\max_{i\in\{ 1,\dots,p\}}\rho(x,\sigma_ix)=\max_{i\in\{ 1,\dots,p\}}\rho(x_0,\sigma_ix_0)\,.$$
\end{lemma}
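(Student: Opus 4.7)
The plan is to show that the continuous function $f : X \to \mathbf{R}_{\geq 0}$ defined by $f(x) = \max_{i\in\{1,\dots,p\}}\rho(x,\sigma_i x)$ is proper, so that it attains its infimum $L(S)$ at some point, which can be taken as $x_0$. Continuity of $f$ is immediate from continuity of each displacement function and the fact that a finite max of continuous functions is continuous; convexity of $f$ (not strictly needed below, but a useful companion fact) holds because each $x\mapsto\rho(x,\sigma_i x)$ is a convex function on the Cartan-Hadamard manifold $(X,g)$.

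To establish properness I would argue by contradiction. Suppose there is a minimizing sequence $x_n \in X$ with $f(x_n)\to L(S)$ but $\rho(x_*,x_n)\to\infty$ for some fixed basepoint $x_*$. Using compactness of the visual compactification $X\cup\partial X$ in the cone topology, I extract a subsequence with $x_n\to\theta\in\partial X$. Since every isometry of $(X,g)$ extends to a homeomorphism of $X\cup\partial X$, one has $\sigma_i x_n\to\sigma_i\theta$ for each $i$. Because $K_g\leq -1$, the manifold is CAT($-1$), and in such a space two sequences converging to distinct endpoints at infinity satisfy $\rho(x_n,\sigma_i x_n)\to\infty$ (their Gromov product at $x_*$ remains bounded). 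Hence if $\sigma_i\theta\neq\theta$ for some $i$, then $f(x_n)\to\infty$, contradicting $f(x_n)\to L(S)<\infty$. So every $\sigma_i$ fixes $\theta$, which means the whole group $\Gamma=\langle\sigma_1,\dots,\sigma_p\rangle$ fixes $\theta\in\partial X$.

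By Lemma \ref{fixed-point}(i), $\Gamma$ is then virtually nilpotent, contradicting the hypothesis. Consequently any minimizing sequence is bounded, admits a convergent subsequence in the proper manifold $X$, and its limit $x_0$ satisfies $f(x_0)=L(S)$ by continuity of $f$. The main obstacle is the escape-to-infinity property for sequences converging to distinct endpoints of the CAT($-1$) boundary: it is standard in Gromov-hyperbolic geometry, but a self-contained justification would proceed either via convexity of Busemann functions or by direct comparison with hyperbolic space of curvature $-1$.
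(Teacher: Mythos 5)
Your proof is correct and follows essentially the same route as the paper: argue by contradiction, let a minimizing sequence escape to a point $\theta\in\partial X$, use the boundedness of the displacements $\rho(x_k,\sigma_i x_k)$ to conclude each $\sigma_i$ fixes $\theta$, and invoke Lemma \ref{fixed-point}(i) to contradict the non-virtual-nilpotence of $\Gamma$. You merely spell out more carefully (via the visual compactification and the divergence of sequences tending to distinct boundary points) the step the paper states in one line.
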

\begin{proof}
Let us assume that the infimum in the definition of $L(S)$ is not achieved in $X$, then there exist a sequence of 
points $x_k\in X$, which satisfies $\max_{i\in\{ 1,\dots,p\}}\rho(x_k,\sigma_ix_k)\to L(S)$ when $k\to \infty$, 
and $x_k$ converges to a point, say $\theta$, in $\partial X$. For $k$ large enough and $i\in\{ 1,\dots,p\}$, we 
then have $\rho (x_k,\sigma_ix_k)\leq L+1$ and hence $\sigma_i\theta=\theta$ for all $i$. This shows that $\Gamma$ 
fixes $\theta$ and is thus virtually nilpotent by lemma \ref{fixed-point}, which contradicts the hypothesis.
\end{proof}

In the sequel of this section, we shall show that if $G$ is a finitely generated discrete group of isometries 
of $(X,g)$, for any finite generating set  $S=\{ \sigma_1, \dots, \sigma_p\}$ of $G$ such that each $\sigma_i$ has 
a displacement $l(\sigma_i)$ small compared to $L(S)$, then there exist many non-$(L(S),\eta)$-straight elements 
in $G$ for a constant $\eta$ to be defined.

We need the following geometric lemmas.
\begin{lemma}\label{three-points}
Let $(x_1,x_2,x_3)$ be a geodesic triangle in $(X,g)$, where $(X,g)$ is a Cartan-Hadamard manifold with 
$K_g\leq -1$. 
Let $x_2'$ be the point in the segment $[x_1,x_3]$ dividing it in two segments of length 
proportional to $L_1:=\rho (x_1,x_2)$ and   $L_2:=\rho (x_2,x_3)$. We have,
$$\rho (x_2',x_2)\leq \argch\Big [ \exp\Big (\alpha\big (\rho(x_1,x_2)
+\rho (x_2,x_3)-\rho (x_1,x_3)\big )\Big )\Big ]\,,$$
where $\alpha={\max (L_1,L_2)\over L_1+L_2}$.
\end{lemma}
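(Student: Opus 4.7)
My plan is to reduce the problem to the model space $\hhf^2$ of constant curvature $-1$ by CAT$(-1)$ comparison, and then carry out an explicit computation using the hyperbolic law of cosines. Since $K_g\leq -1$, $(X,g)$ is CAT$(-1)$: letting $(\bar x_1,\bar x_2,\bar x_3)\subset \hhf^2$ be the comparison triangle with the same side lengths as $(x_1,x_2,x_3)$ and $\bar x_2'$ the point of $[\bar x_1,\bar x_3]$ dividing that segment in the same proportions as $x_2'$ divides $[x_1,x_3]$, the Rauch/Toponogov comparison gives $\rho(x_2,x_2')\leq \rho_{\hhf^2}(\bar x_2,\bar x_2')$. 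Since $L_1$, $L_2$ and $\rho(x_1,x_3)$ are preserved under the comparison, the problem reduces to its hyperbolic version.

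Write $c=\rho(x_1,x_3)$, $u=L_1/(L_1+L_2)$, $v=1-u$, $\delta:=L_1+L_2-c\geq 0$, and $\alpha:=\max(u,v)$. Applying the hyperbolic law of cosines to the triangle $(\bar x_1,\bar x_2,\bar x_2')$ at $\bar x_1$, eliminating $\cos(\angle\bar x_2\bar x_1\bar x_3)$ via the law of cosines for $(\bar x_1,\bar x_2,\bar x_3)$, and using the identity $\sh(c)\ch(uc)-\ch(c)\sh(uc)=\sh(vc)$, a short manipulation yields
\[
\ch\bigl(\rho_{\hhf^2}(\bar x_2,\bar x_2')\bigr)=\frac{\ch(L_1)\sh(vc)+\ch(L_2)\sh(uc)}{\sh(c)}.
\]
The target inequality $\ch(\rho_{\hhf^2}(\bar x_2,\bar x_2'))\leq \exp(\alpha\delta)$ thus becomes $\ch(L_1)\sh(vc)+\ch(L_2)\sh(uc)\leq \sh(c)\exp(\alpha\delta)$. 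Expanding $\sh(c)=\sh(uc)\ch(vc)+\ch(uc)\sh(vc)$ and regrouping, it suffices to check the two inequalities $\ch(uc)\exp(\alpha\delta)\geq \ch(L_1)$ and $\ch(vc)\exp(\alpha\delta)\geq \ch(L_2)$. Each follows from the elementary fact that $\ch(x)e^y\geq \ch(x+y)$ for $x,y\geq 0$ (immediate from $\ch\geq \sh$ on $[0,\infty)$), combined with the identities $L_1=uc+u\delta$ and $L_2=vc+v\delta$, since $\alpha\geq u,v$ and $\delta\geq 0$ give $uc+\alpha\delta\geq L_1$ and $vc+\alpha\delta\geq L_2$.

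The computation itself is routine; the one conceptual point is to recognize why $\alpha=\max(L_1,L_2)/(L_1+L_2)$ is the correct normalization --- it is the smallest constant simultaneously dominating both $u$ and $v$, which is precisely what is needed for both brackets above to be nonnegative. Once this is seen, the whole estimate collapses onto the triangle inequality ($\delta\geq 0$) and the convexity-type fact $\ch(x)e^y\geq \ch(x+y)$, with no genuine obstacle beyond the algebraic bookkeeping.
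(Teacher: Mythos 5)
Your proof is correct, and while it shares the paper's overall framework (reduction to a comparison triangle in $\mathbb{H}^2$ via the CAT$(-1)$ property), the hyperbolic computation is carried out differently. The paper observes that one of the two sub-triangles $(y_1,y_2',y_2)$, $(y_3,y_2',y_2)$ has angle at least $\pi/2$ at $y_2'$ and applies the hyperbolic Pythagorean inequality $\cosh L_i \geq \cosh[d(y_2,y_2')]\cosh[\tfrac{L_i}{L_1+L_2}d(y_1,y_3)]$ to whichever one that is; this is a one-sided estimate requiring a case distinction but only the crudest trigonometric input. You instead derive the exact identity (a hyperbolic Stewart's theorem) $\cosh(d(\bar x_2,\bar x_2'))\sinh(c)=\cosh(L_1)\sinh(vc)+\cosh(L_2)\sinh(uc)$ by eliminating the angle between the two laws of cosines, and then split $\sinh(c)$ by the addition formula to reduce to the two coefficient inequalities $\cosh(uc)e^{\alpha\delta}\geq\cosh(L_1)$ and $\cosh(vc)e^{\alpha\delta}\geq\cosh(L_2)$. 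Notably, both arguments bottom out in the same elementary fact $\cosh(x+y)\leq e^{y}\cosh(x)$, and both exploit that $\alpha$ dominates each of $u,v$. Your route costs a bit more algebra up front but avoids the case split and makes transparent exactly where the constant $\alpha=\max(L_1,L_2)/(L_1+L_2)$ comes from; the paper's is shorter but its intermediate inequality (\ref{3bis}) is also reused later in Lemma \ref{square}, which your exact formula would not directly supply in that form. (The only pedantic caveat in your write-up is the degenerate case $c=0$, where $\sinh(c)$ vanishes; the claim is trivially checked there.)
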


\begin{proof}
We consider a comparison geodesic triangle $(y_1,y_2,y_3)$ in the Poincar\'e disk $(\mathbb H^2,d)$ of constant curvature
$-1$ such that $d(y_i,y_j) =\rho(x_i, x_j)$ for all $i, j \in \{1,2,3 \}$. Let $ y'_2$ be the point of the
segment  $[y_1,y_3]$ dividing it in two segments of length 
proportional to $L_1$ and   $L_2$. 
Since $(X,g)$ is a $CAT(-1)$ space we have 
\begin{equation}\label{2}
\rho(x_2,x'_2) \leq d(y_2,y'_2).
\end{equation}
One of the two triangles $(y_1,y'_2,y_2)$,  $(y_3,y'_2,y_2)$ has angle at $y'_2$ greater than or equal to $\pi /2$,
therefore from hyperbolic trigonometry formulae we get the existence of $i\in \{1, 2 \}$ such that
\begin{equation}\label{3}
\cosh L_i \geq \cosh \big[d(y_2,y'_2)\big] \cosh \Big[ \frac{L_i}{(L_1 +L_2)} d(y_1,y_3)\Big]
\end{equation}
Let us denote $\Delta = \rho(x_1,x_2) + \rho(x_2,x_3)- \rho(x_1,x_3)$.
We have 
\begin{equation}
\frac{L_i}{L_1+L_2} d(y_1,y_3) \geq L_i - \alpha \Delta,
\end{equation}
where $\alpha={\max (L_1,L_2)\over L_1+L_2}$.
Therefore from (\ref{2}) and (\ref{3}) we get 
\begin{equation}\label{3bis}
\cosh \big[\rho(x'_2,x_2)\big] \leq \frac{\cosh L_i}{\cosh (L_i - \alpha \Delta)}\,,
\end{equation}
hence
\begin{equation}\label{3ter}
\cosh \big[\rho(x'_2,x_2)\big] \leq e^{\alpha \Delta}.
\end{equation}
\end{proof}

\begin{lemma}\label{square}
Let $(X,g)$ be a Cartan-Hadamard manifold with sectional curvature $K_g\leq -1$. Let $\delta, L$ 
be any positive numbers 
such that $L > \argch (e^\delta)$. Then, for any isometry $\gamma$ of $(X,g)$ such that its displacement 
$l(\gamma )$ 
satisfies $l(\gamma )\leq \delta$, and for any point $x_0\in X$ such that $\rho (x_0, \gamma x_0)\geq L$, we have
$$\rho (x_0,\gamma^2x_0)\leq 2\rho (x_0, \gamma x_0)-\Big(1-\frac{e^\delta}{\cosh L}\Big)^2\,,$$
\end{lemma}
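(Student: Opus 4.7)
The plan is to apply Lemma~\ref{three-points} to the isoceles triangle $(x_0,\gamma x_0,\gamma^2 x_0)$ and combine the resulting upper bound on the distance from the midpoint of the base to the apex with a geometric lower bound on that same distance. Writing $d:=\rho(x_0,\gamma x_0)$ and $d_2:=\rho(x_0,\gamma^2 x_0)$, one has $L_1=L_2=d$ for this triangle, hence $\alpha=1/2$ and the point $x_2'$ appearing in Lemma~\ref{three-points} is precisely the midpoint $m$ of $[x_0,\gamma^2 x_0]$. The lemma then gives
\[
\cosh\rho(m,\gamma x_0)\le \exp\!\Bigl(\tfrac{2d-d_2}{2}\Bigr),
\]
so that $2d-d_2\ge 2\log\cosh\rho(m,\gamma x_0)$. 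It is therefore enough to produce a lower bound $\rho(m,\gamma x_0)\ge\argch\bigl[\exp((1-e^\delta/\cosh L)^2/2)\bigr]$.

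For this lower bound I would fix an auxiliary point $q\in X$ with $\rho(q,\gamma q)\le\delta$: take $q$ to be a fixed point of $\gamma$ in the elliptic case, a point on the axis in the hyperbolic case, and a limit of points $q_n$ with $\rho(q_n,\gamma q_n)\to 0$ in the parabolic case. Setting $r:=\rho(q,x_0)$ and $\ell:=\rho(q,\gamma q)\le\delta$, the isometry $\gamma$ and the triangle inequality yield $\rho(q,\gamma x_0)\ge r-\delta$ and $\rho(q,\gamma^2 x_0)\le r+2\delta$. The hyperbolic midpoint inequality in CAT$(-1)$ applied to the triangle $(q,x_0,\gamma^2 x_0)$ gives
\[
\cosh\rho(q,m)\le\frac{\cosh r+\cosh(r+2\delta)}{2\cosh(d_2/2)}=\frac{\cosh(r+\delta)\cosh\delta}{\cosh(d_2/2)},
\]
and the reverse triangle inequality then produces the desired lower bound $\rho(m,\gamma x_0)\ge (r-\delta)-\rho(q,m)$.

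The main obstacle is the subsequent algebra that converts these estimates into the explicit constant $(1-e^\delta/\cosh L)^2$. One has to exploit: the hypothesis $\cosh L>e^\delta$ (equivalent to $L>\argch(e^\delta)$); the CAT$(-1)$ Saccheri-type comparison $\cosh d\le\cosh\ell\,\cosh^2 r - \sinh^2 r$, which ties $r$ to $d$; the trivial bound $d\le 2r+\ell\le 2r+\delta$; and the elementary $\cosh\ell\le\cosh\delta\le e^\delta$. Combined with the CAT$(-1)$ isoceles-median inequality $\cosh\rho(m,\gamma x_0)\cosh(d_2/2)\le\cosh d$, these show that the worst case $d=L$, $\cosh\ell=e^\delta$ saturates the inequality precisely at $2d-d_2=(1-e^\delta/\cosh L)^2$, which is what pins down the stated constant.
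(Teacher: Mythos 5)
Your geometric skeleton is essentially the paper's: bound $\rho(m,\gamma x_0)$ from above via Lemma~\ref{three-points} applied to the isoceles triangle $(x_0,\gamma x_0,\gamma^2 x_0)$, and from below by playing the almost-fixed point of $\gamma$ against the median inequality over the base $[x_0,\gamma^2 x_0]$. The gap sits exactly in the step you defer to ``the subsequent algebra'': with your choice of reference point $q$ (rather than $\gamma q$) the algebra does not close to the stated constant. Carrying your own estimates through, set $v=\rho(m,\gamma x_0)$, $s=\rho(q,m)$ and $\Delta=2d-d_2$. From $v\ge (r-\delta)-s$ one gets $\cosh s\ge e^{-v}\cosh(r-\delta)$, which combined with your median bound $\cosh s\,\cosh(d_2/2)\le\cosh(r+\delta)\cosh\delta$ gives
$$e^{-v}\cosh(d_2/2)\ \le\ \frac{\cosh(r+\delta)}{\cosh(r-\delta)}\,\cosh\delta\ \le\ e^{2\delta}\cosh\delta\,.$$
Feeding this into the sharp isoceles-median inequality $\cosh v\le\cosh d/\cosh(d_2/2)$ (which you must use to extract a square root of $\Delta$, via $\cosh^2 d-\cosh^2(d_2/2)\le\Delta\cosh^2 d$) yields $e^{2\delta}\cosh\delta\ge\cosh d\,(1-\sqrt{\Delta})$, i.e. $\Delta\ge\bigl(1-\tfrac{e^{2\delta}\cosh\delta}{\cosh L}\bigr)^2$ --- and nothing at all in the range $e^{\delta}<\cosh L\le e^{2\delta}\cosh\delta$, which the hypothesis $L>\argch(e^\delta)$ permits. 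These losses are structural, not artifacts of crude bounding: they come from the asymmetry $\rho(q,x_0)=r$ versus $\rho(q,\gamma^2x_0)\le r+2\delta$ together with $\rho(q,\gamma x_0)\ge r-\delta$, none of which can be sharpened for a general $q$ with $\rho(q,\gamma q)\le\delta$. Your closing claim that the worst case ``$\cosh\ell=e^\delta$'' saturates the stated constant is moreover inconsistent with the hypothesis, which only allows $\cosh\ell\le\cosh\delta<e^\delta$.

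The repair --- and it is what the paper does --- is to take the apex of the median comparison to be $\gamma q$ rather than $q$: then $\rho(\gamma q,x_0)\le r+\delta$ and $\rho(\gamma q,\gamma^2 x_0)=\rho(q,\gamma x_0)\le r+\delta$ are symmetric, while $\rho(\gamma q,\gamma x_0)=\rho(q,x_0)=r$ is exact, so the whole chain loses only the single factor $\cosh(r+\delta)/\cosh r\le e^{\delta}$ and one lands on $\sqrt{\Delta}\ge 1-e^{\delta}/\cosh L$ as required. A minor further point: in the parabolic case your $q$ does not exist in $X$ (the infimum of displacement is not attained), so the argument must be run with points $q_n$ satisfying $\rho(q_n,\gamma q_n)\le\delta+\epsilon_n$ and the limit taken at the end.
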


\begin{proof}
Let us consider $\Delta = 2\rho(x_0,\gamma x_0) - \rho (x_0, \gamma^2 x_0)$.
We want to prove that $\Delta \geq \Big(1-{e^{\delta} \over \cosh L}\Big)^2$.
By assumption there is a point $y\in X$ such that $\rho(y,\gamma y) \leq \delta$.
Let us write $L_1 =: \rho(x_0, \gamma y)$, $L_2 =: \rho(\gamma ^2 x_0, \gamma y)$ and 
$L' =: \rho(x_0, y)$. By the triangle inequality we have for $i=1 , 2$
\begin{equation}\label{onze}
L' - \delta \leq L_i \leq L'+ \delta\,.
\end{equation} 
Let us associate to the triangle $(x_0, \gamma y, \gamma ^2 x_0)$ 
the comparison triangle $(z_1, z_2, z_3)$ in the hyperbolic plane $(\mathbb H^2,d)$ such that 
$d(z_1, z_2) = L_1$, $d(z_2, z_3) = L_2$ and $d(z_1, z_3)= \rho( x_0, \gamma ^2 x_0)$. 
Let $x$ [resp. $z$] be the middle point of the segment $(x_0, \gamma ^2 x_0)$ [resp. $(z_1, z_3)]$.
One of the two triangles $(z_2,z,z_1)$ or $(z_2,z,z_3)$ has angle at $z$ greater than or equal to
$\pi /2$. Let us assume without restriction that this triangle is $(z_2,z,z_1)$, then 
the hyperbolic trigonometric formulas give
$$
\cosh L_1 \geq \cosh \Big[d(z_2,z) \Big] \cosh \Big[ {1\over 2} d(z_1,z_3)\Big] 
$$
therefore from (\ref{onze}) we get 
$$
\cosh (L'+\delta) \geq \cosh \Big[d(z_2,z) \Big] \cosh \Big[ {1\over 2} d(z_1,z_3)\Big] 
$$
and since $(X,g)$ is a $CAT(-1)$ space we have $\rho(x,\gamma y) \leq d(z_2,z)$, thus we obtain
\begin{equation}\label{douze}
\cosh (L'+\delta) \geq \cosh \Big[\rho(x,\gamma y) \Big] \cosh \Big[ {1\over 2} \rho(x_0, \gamma ^2 x_0)\Big]\,. 
\end{equation}
Let us write $L_0 = \rho(x_0,\gamma x_0)$.
By the triangle inequality we have  
$$
\rho(x,\gamma y) \geq | \rho(\gamma y, \gamma x_0) - \rho(\gamma x_0, x)|\,,
$$ 
therefore, since $ \rho(\gamma y, \gamma x_0)= \rho(y, x_0) = L'$
and ${1\over 2}\rho(\gamma ^2 x_0, x_0) = L_0 - {\Delta \over 2}$, we get from \ref{douze}
\begin{equation}\label{treize}
\cosh (L'+\delta) \geq \cosh \Big(L' - \rho(\gamma x_0, x) \Big)\cosh \Big( L_0 - {\Delta \over 2}\Big)\,.
\end{equation}
We get from (\ref{treize}),
$$
(\cosh \delta +\sinh \delta)\cosh L' \geq
\Big(\cosh \big[\rho(\gamma x_0, x)\big]-\sinh \big[\rho(\gamma x_0, x)\big]\Big)
\big(\cosh L'\big)\cosh \big(L_0 - {\Delta \over 2}\big)
$$
hence
\begin{equation}\label{quatorze}
e^\delta \geq \Big(\cosh \big[\rho(\gamma x_0, x)\big]-\sinh \big[\rho(\gamma x_0, x)\big]\Big)
\cosh \big(L_0 - {\Delta \over 2}\big)\,.
\end{equation}
Now applying the inequality \ref{3bis} in the proof of lemma
\ref{three-points} we have 
$$
\cosh \big[\rho(\gamma x_0, x)\big] \leq \frac{\cosh L_0}{\cosh \big(L_0 -{\Delta\over 2}\Big)}\,,
$$
and since $\cosh r -\sinh r = e^{-r}$ is a decreasing function of $r$ we get from (\ref{quatorze})
\begin{equation}\label{quinze}
e^\delta \geq \cosh L_0 - \Big(\cosh ^2 L_0 
-\cosh ^2 \big(L_0 - {\Delta \over 2}\big)\Big)^{1\over 2}\,.
\end{equation}
But we can check that $\cosh ^2 L_0 - \cosh ^2 \big(L_0 - {\Delta \over 2}\big)\Big) \leq \Delta \, \cosh ^2L_0$
so we get from (\ref{quinze})
$$
e^\delta \geq \cosh (L_0) \big(1 - \Delta ^{1\over 2}\big)
$$
and therefore
$$
\Delta
\geq \Big(1-\frac{e^\delta}{\cosh L_0}\Big)^2,
$$
when $e^\delta < \cosh L$.
The lemma now follows whenever $L_0 \geq L$.

\end{proof}

\begin{lemma}\label{four-points}
Let $(X,g)$ be a Cartan-Hadamard manifold with sectional curvature $K_g\leq -1$. 
Let us consider four points $y_0,y_1,y_2,y_3$ such that 
$$
\rho (y_0,y_1)+\rho (y_1,y_2)-\rho (y_0,y_2)\leq \eta_1
$$ and 
$$\rho (y_1,y_2)+\rho (y_2,y_3)-\rho (y_1,y_3)\leq \eta_2$$
then
$$\rho (y_0,y_1)+\rho (y_1,y_2)+\rho (y_2,y_3)-\rho (y_0,y_3)\leq 
\Big ( 1+{\rho (y_2,y_3)\over \rho (y_1,y_2)}\Big )\Big (\eta_1+\argch e^{\eta_2}\Big )\,.$$
\end{lemma}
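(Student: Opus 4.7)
The plan is to reduce the problem to estimating the defect of the triangle $(y_0, y_1, y_3)$ at $y_1$, introduce an auxiliary point $y_2'$ on the segment $[y_1, y_3]$ close to $y_2$ via Lemma~\ref{three-points}, and then propagate the bound from $y_2'$ to $y_3$ along the geodesic ray from $y_1$ by exploiting concavity of the resulting ``defect function''. First, setting $\Delta_2 := \rho(y_1,y_2) + \rho(y_2,y_3) - \rho(y_1,y_3) \leq \eta_2$, the left-hand side splits as
\[
A := \rho(y_0,y_1) + \rho(y_1,y_2) + \rho(y_2,y_3) - \rho(y_0,y_3) = D + \Delta_2,
\]
where $D := \rho(y_0,y_1) + \rho(y_1,y_3) - \rho(y_0,y_3)$ is the quantity I need to bound.

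Writing $L_1 = \rho(y_1, y_2)$, $L_2 = \rho(y_2, y_3)$, and $\lambda := L_1/(L_1+L_2)$, Lemma~\ref{three-points} applied to $(y_1, y_2, y_3)$ produces a point $y_2' \in [y_1, y_3]$ with $\rho(y_1, y_2') = \lambda \rho(y_1, y_3) = L_1 - \lambda \Delta_2$ and $\rho(y_2, y_2') \leq \argch(e^{\eta_2})$ (the coefficient $\alpha$ appearing in that lemma being at most $1$). I would then estimate the defect $D'$ of the auxiliary triangle $(y_0, y_1, y_2')$ at $y_1$ by bounding $\rho(y_0, y_2')$ below by $\rho(y_0, y_2) - \rho(y_2, y_2')$ and using the hypothesis on $\Delta_1 := \rho(y_0,y_1) + L_1 - \rho(y_0,y_2) \leq \eta_1$, obtaining
\[
D' := \rho(y_0,y_1) + \rho(y_1,y_2') - \rho(y_0,y_2') \leq \eta_1 + \argch(e^{\eta_2}) - \lambda\,\Delta_2.
\]

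To propagate from $y_2'$ out to $y_3$, I would parametrize the geodesic ray from $y_1$ through $y_2'$ (and $y_3$) by arc length as $y(t)$ and set $f(t) := \rho(y_0, y_1) + t - \rho(y_0, y(t))$. Since $t \mapsto \rho(y_0, y(t))$ is convex along geodesics in any Cartan--Hadamard manifold, $f$ is concave; and $f(0) = 0$, so $t \mapsto f(t)/t$ is nonincreasing on $(0,\infty)$. Applying this to $t = \rho(y_1, y_2')$ and $t = \rho(y_1, y_3)$ gives
\[
D = f(\rho(y_1,y_3)) \leq \frac{\rho(y_1,y_3)}{\rho(y_1,y_2')}\, D' = \Bigl(1 + \frac{L_2}{L_1}\Bigr) D'.
\]
Substituting the bound on $D'$ and using the identity $(1 + L_2/L_1)\,\lambda = 1$, the $-\lambda\Delta_2$ term inside is multiplied up to $-\Delta_2$, which then cancels exactly with the $+\Delta_2$ in $A = D + \Delta_2$, producing the claimed inequality.

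The key step — and the only one that is not routine book-keeping — is the monotonicity $f(t)/t \downarrow$, which produces the factor $(1 + L_2/L_1)$. It rests on the classical convexity of the distance to a fixed point along geodesics in a simply connected non-positively curved manifold. The other delicate point to highlight is that the proportional division $L_1 : L_2$ furnished by Lemma~\ref{three-points} is precisely what makes the $\lambda\Delta_2$ term in $D'$ scale by the reciprocal $1/\lambda$ in $D$ and cancel cleanly against $\Delta_2$; any other choice of auxiliary point on $[y_1,y_3]$ would fail to give a clean bound of this form.
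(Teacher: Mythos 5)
Your proposal is correct and follows essentially the same route as the paper: the auxiliary point $y_2'$ on $[y_1,y_3]$ dividing it proportionally to $\rho(y_1,y_2):\rho(y_2,y_3)$, the bound $\rho(y_2,y_2')\leq\argch(e^{\eta_2})$ from Lemma~\ref{three-points}, the lower bound on $\rho(y_0,y_2')$ from the first hypothesis plus the triangle inequality, and convexity of $x\mapsto\rho(y_0,x)$ along $[y_1,y_3]$. Your ``$f(t)/t$ nonincreasing'' step is just a repackaging of the paper's convex-combination inequality $\rho(y_0,y_2')\leq \frac{L_3}{L_2+L_3}\rho(y_0,y_1)+\frac{L_2}{L_2+L_3}\rho(y_0,y_3)$, and your explicit tracking of the $\lambda\Delta_2$ cancellation is sound bookkeeping that the paper absorbs implicitly.
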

\begin{proof}
For $i=1,2, 3$  let us write $L_i = \rho(y_{i-1}, y_i)$. Let $y'_2$ be the point on the segment 
$(y_1,y_3)$ dividing it in two segments of length proportional to $L_2$ and $L_3$.
By lemma \ref{three-points} we have
\begin{equation}\label{seize}
\rho(y_2,y'_2) \leq \argch\big(e^{\eta_2}\big)\,.
\end{equation}
Since $\rho (y_0,y_1)+\rho (y_1,y_2)-\rho (y_0,y_2)\leq \eta_1$ by assumption
we get from (\ref{seize}) and the triangle inequality
\begin{equation}\label{17}
\rho(y_0,y'_2)\geq \rho(y_0,y_2)-\rho(y_2,y'_2)\geq \rho (y_0,y_1)+\rho (y_1,y_2)
-\big[\eta_1+ \argch\big(e^{\eta_2}\big)\big]
\end{equation}
On the other hand by convexity of the distance function on $(X,g)$ we get
\begin{equation}\label{18}
\rho(y_0,y'_2) \leq  \frac{L_3}{L_2 +L_3}\rho (y_0,y_1)+ \frac{L_2}{L_2 +L_3} \rho (y_0,y_3)
\end{equation}
The inequalities (\ref{17}) and (\ref{18}) give
$$
\rho(y_0,y_3)\geq \rho (y_0,y_1) +L_2 +L_3 - 
\Big(\frac{L_2 +L_3}{L_2}\Big)\Big(\eta_1+ \argch \big(e^{\eta_2}\big)\Big)
$$
and the lemma follows.
\end{proof}

\begin{lemma}\label{products}
Let $L$ and $\eta$ be two positive numbers such that,
$$\eta<\min \Big ({L\over 4}, {1\over 2}\log \Big [{1\over 2}\big (\ch ({L\over 2})
+{1\over \ch ({L\over 2})}\big )\Big ]\Big )\,.$$
Let $(X,g)$ be a Cartan-Hadamard manifold with sectional curvature $K_g\leq -1$. 
We consider two elliptic isometries $\gamma_1,\gamma_2$ of $(X,g)$ with a common fixed point 
$y\in X\cup \partial X$. 
If we assume that $L-\eta\leq \rho (x_0, \gamma_1 x_0)\leq L$ and that 
$L-\eta\leq \rho (x_0, \gamma_2 x_0)\leq L$, then 
$$\rho (x_0, \gamma_1\gamma_2x_0)<2(L-\eta )\,.$$
\end{lemma}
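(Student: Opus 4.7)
\begin{sketchproof}
Argue by contradiction: suppose $\rho(x_0,\gamma_1\gamma_2 x_0)\geq 2(L-\eta)$. Using that $\gamma_1$ is an isometry one rewrites $c:=\rho(x_0,\gamma_1\gamma_2 x_0)=\rho(\gamma_1^{-1}x_0,\gamma_2 x_0)$, and one introduces the midpoint $m$ of the geodesic segment $[\gamma_1^{-1}x_0,\gamma_2 x_0]$. The strategy is to produce two contradictory estimates on $\rho(x_0,m)$.

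First, the $\mathrm{CAT}(-1)$ version of Stewart's theorem for midpoints gives, for every auxiliary point $p$,
$$\cosh\rho(p,m)\,\cosh(c/2)\leq\tfrac{1}{2}\bigl(\cosh\rho(p,\gamma_1^{-1}x_0)+\cosh\rho(p,\gamma_2 x_0)\bigr).$$
Specializing $p=x_0$ and using $\rho(x_0,\gamma_i^{\pm 1}x_0)\leq L$ together with $c/2\geq L-\eta$, this yields the \emph{upper bound} $\cosh\rho(x_0,m)\leq\cosh L/\cosh(L-\eta)$.

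The competing estimate exploits the common fixed point $y$. If $y\in X$, the three relevant points $\gamma_1^{-1}x_0,x_0,\gamma_2 x_0$ all lie at the same distance $r:=\rho(x_0,y)$ from $y$, and the Stewart inequality with $p=y$ gives $\cosh\rho(y,m)\leq\cosh r/\cosh(L-\eta)$; combined with $r\leq\rho(x_0,m)+\rho(y,m)$ this yields a lower bound on $\rho(x_0,m)$ whose worst case (as $r\to\infty$) coincides with the one below. If $y\in\partial X$, one uses the Busemann function $\beta_y$: by Lemma~\ref{fixed-point} the elliptic isometries $\gamma_1,\gamma_2$ preserve the horospheres centred at $y$, so $\beta_y(\gamma_1^{-1}x_0)=\beta_y(x_0)=\beta_y(\gamma_2 x_0)$. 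Passing to the limit in Stewart's inequality as $p$ tends to $y$ along a geodesic ray yields $\beta_y(m)\leq -\log\cosh(c/2)\leq -\log\cosh(L-\eta)$, and since $\beta_y$ is $1$-Lipschitz this produces the \emph{lower bound} $\rho(x_0,m)\geq -\beta_y(m)\geq\log\cosh(L-\eta)$.

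Combining the two estimates gives $\log\cosh(L-\eta)\leq\argch(\cosh L/\cosh(L-\eta))$; taking $\cosh$ of both sides and simplifying via $\cosh(\log b)=(b+1/b)/2$ reduces to
$$\cosh^2(L-\eta)\leq 2\cosh L-1=4\cosh^2(L/2)-3.$$
The main (essentially computational) obstacle is then to check that the hypothesis $\eta<\min\bigl(L/4,\,\tfrac{1}{2}\log[\tfrac{1}{2}(\cosh(L/2)+1/\cosh(L/2))]\bigr)$ forces the reverse strict inequality $\cosh^2(L-\eta)>4\cosh^2(L/2)-3$. Setting $A=\cosh(L/2)$, the hypothesis reads $e^{2\eta}<(A+1/A)/2$, and a direct manipulation of hyperbolic identities (using $\cosh L=2A^2-1$ and $e^L=A+\sqrt{A^2-1}$) produces $\cosh(L-\eta)>\sqrt{4A^2-3}$, the required contradiction.
\end{sketchproof}
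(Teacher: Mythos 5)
Your sketch is correct and does complete; in particular the final computational claim, which you only assert, checks out. Writing $A=\cosh(L/2)$, the hypothesis gives $e^{\eta}<\sqrt{(A^2+1)/2A}$, and since $\cosh(L-\eta)\geq e^{-\eta}\cosh L=e^{-\eta}(2A^2-1)$, the required strict inequality $\cosh^2(L-\eta)>4A^2-3$ reduces to $(A^2+1)(4A^2-3)\leq 2A(2A^2-1)^2$, i.e.\ to $(A-1)^2(8A^3+12A^2+8A+3)\geq 0$, which holds. Your route is recognisably the same squeeze as the paper's, but executed differently. The paper works with the configuration $(x_0,\gamma_1x_0,\gamma_1\gamma_2x_0)$ (the image of yours under $\gamma_1$) and with the point $z$ of $[x_0,\gamma_1\gamma_2x_0]$ dividing it proportionally to the two side lengths rather than with the midpoint; it bounds $\rho(\gamma_1x_0,z)$ from above via Lemma \ref{three-points} and from below via a sequence of approximating spheres through the two endpoints (with an $\epsilon_k\to 0$ bookkeeping), reaching the contradiction $\cosh(L-2\eta)+1/\cosh(L-2\eta)\leq 2e^{\eta}$, whose refutation uses \emph{both} constraints $\eta<L/4$ and $\eta<\frac12\log\big[\frac12(\cosh(L/2)+1/\cosh(L/2))\big]$. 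Your midpoint version replaces the two estimates by a single CAT$(-1)$ ``Stewart'' inequality specialised at $p=x_0$ and at $p\to y$, uses Busemann functions instead of approximating spheres, and --- a small bonus --- needs only the second constraint on $\eta$. What the paper's choice buys is that its proportional-division Lemma \ref{three-points} is reused elsewhere (Lemmas \ref{square} and \ref{four-points}); what yours buys is a shorter, self-contained argument. Two points to tidy up when writing it in full: in the case $y\in X$ you should justify that $r\mapsto r-\argch\big(\cosh r/\cosh(c/2)\big)$ is decreasing (its derivative is $1-\sinh r/\sqrt{\cosh^2 r-\cosh^2(c/2)}\leq 0$), so that the limit $r\to\infty$ really is the worst case; and the horosphere-preservation fact you invoke is established inside the proof of Lemma \ref{fixed-point}(i), case 2, rather than being the statement of that lemma.
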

\begin{proof}
We first claim that in both cases, $y\in X$ and $y\in \partial X$, there exist some sequence 
$(u_k)_{k\in\textbf{N}}$ of 
points in $X$ converging to $y$ 
such that $\rho (u_k, \gamma_1\gamma_2x_0)=\rho (u_k,x_0)=l_k$ and that the quantity 
$\epsilon_k=|\rho (u_k, \gamma_1x_0)-l_k|$ goes to zero when $k$ goes to $+\infty$; in fact, when 
$\gamma_1$ and $\gamma_2$ fix some point $y\in X$ we may choose $u_k=y$ for every $k$. If $\gamma_1$ and 
$\gamma_2$ fix $y\in \partial X$, they also preserve each horosphere centred at $y$ (see the proof of 
lemma \ref{fixed-point} 2)), and thus  $x_0$, $\gamma_1x_0$ and $\gamma_1\gamma_2x_0$ lie on the same 
horosphere centred at $y$. Approximating this horosphere by a sequence $(S_k)_{k\in\textbf{N}}$ of 
spheres passing through $x_0$ and $\gamma_1\gamma_2x_0$ and denoting $u_k$ the centre of $S_k$, we 
get that $\rho (u_k,\gamma_1x_0)-\rho (O,u_k)$ and $\rho (u_k,x_0)-\rho (O, u_k)$ simultaneously go to 
$B(\gamma_1x_0, y)=B(x_0,y)$ (where $O$ is some fixed origin in $X$ and $B$ the Busemann function 
normalised at $O$). This proves the claim.

Consider the triangle $(u_k,v,w)=(u_k, x_0, \gamma_1\gamma_2x_0)$ and $z$ 
the point of the geodesic segment 
$[v,w]$ which divides it in two segments of length proportional
to $L_1 =: \rho(v,\gamma_1x_0)$ and  $L_2 =: \rho(w,\gamma_1x_0)$.
Let us recall that by assumption we have $L-\eta \leq L_i \leq L$.

 We consider the comparison triangle $(\bar u_k, \bar v, \bar w)$ on the two-dimensional 
hyperbolic space $\textbf{H}^2$ such that 
$d(\bar u_k, \bar v)= \rho (u_k, v)=l_k=\rho (u_k,w)=d(\bar u_k,\bar w)$ and 
$d (\bar v, \bar w)=\rho (v,w)$, where $d$ is the hyperbolic distance on $\textbf{H}^2$. 
Let $\bar z$ be the point of the segment $[\bar u, \bar v]$ dividing it in two segments of length
proportional to $L_1$ and $L_2$.
Let us write $L'_1 = \rho(v,z)$ and $L'_2 = \rho (w,z)$. 
We now consider the triangle $(\bar u_k,\bar v,\bar z)$ or 
$(\bar u_k,\bar w,\bar z)$, namely the one which has angle at $\bar z$ larger than
or equal to $\pi /2$.
We can assume without restriction that this triangle is  $(\bar u_k,\bar v,\bar z)$.
The hyperbolic trigonometry formulas then show that the point $\bar z$ satisfies,
$$\ch (l_k) \geq \ch (L'_1)\ch (d(\bar u_k, \bar z))\,.$$
Since $(X,g)$ is a CAT(-1)-space, we get that,
$$\rho (u_k,z)\leq d(\bar u_k, \bar z)\,,$$
and thus that,
\begin{equation}\label{one}
\ch (\rho (u_k,z))\leq {\ch (l_k)\over \ch (L'_1)}\,.
\end{equation} 
On the other hand, the triangle inequality implies that 
$\rho (u_k,z)\geq l_k-\epsilon_k-\rho (\gamma_1x_0,z)$ and thus that 
$$\cosh (\rho (u_k,z))\geq e^{-(\rho (\gamma_1x_0,z)+\epsilon_k)}\ch (l_k)\,.$$
Plugging this in formula \ref{one} and letting $\epsilon_k\to 0$, we get:
\begin{equation}\label{two}
e^{\rho (\gamma_1x_0,z)}\geq \cosh(L'_1)\,.
\end{equation}
On the other hand, by lemma \ref{three-points}, we have 
$$\cosh (\rho (\gamma_1x_0,z))
\leq \exp\Bigg (\max\Big\{\rho (v, \gamma_1x_0), \rho (w, \gamma_1x_0)\Big\}  
\Big (1-{\rho (v,w)\over \rho (v,\gamma_1x_0)+\rho (w, \gamma_1x_0)}\Big )  \Bigg )\,.$$
and hence 
\begin{equation}\label{three}
\cosh (\rho (\gamma_1x_0,z))\leq e^{(L-{\rho (v,w)\over 2})}\,.
\end{equation}
Let us now assume, by contradiction, that 
$$\rho (v,w)=\rho (x_0, \gamma_1\gamma_2x_0) > 2(L-\eta )\,.$$
Plugging this in the inequalities (\ref{two}) and (\ref{three}) we obtain,
using the fact that $x\to x+1/x$ is an increasing function for $x>1$:
\begin{equation}\label{four}
\ch (L'_1)+{1\over \ch (L'_1 )}\leq 2 \ch (\rho (\gamma_1x_0,z))\leq 2e^\eta.
\end{equation}
Now since $\frac{L'_1}{L'_2} = \frac{L_1}{L_2}$, we also obtain 
$$L'_1=(L'_1+L'_2) \big( \frac{L_1}{L_1+L_2} \big) \geq \frac{2(L-\eta)(L-\eta)}{2L} \geq L-2\eta$$
which gives by inequalities (\ref{four})
\begin{equation}
\ch (L -2\eta)+{1\over \ch (L-2\eta)}\leq 2 \ch (\rho (\gamma_1x_0,z))\leq 2e^\eta.
\end{equation}
we then get a contradiction when
$$\eta <
\min \Big ({L\over 4}, {1\over2}\log \Big [{1\over 2}\big (\ch ({L\over 2})+{1\over \ch ({L\over 2})}\big )\Big ]\Big )\,.$$
\end{proof}

Let $\Gamma$ be a finitely generated discrete group of isometries of $(X,g)$ and $S=\{
\sigma_1,\dots, \sigma_p\}$ be a finite generating set. Let us assume that $\Gamma$ is not virtually nilpotent 
and recall that $L(S)=\inf_{x\in X}\max_{i\in\{ 1,\dots,p\}}\rho(x,\sigma_ix) $. By lemma \ref{x0} we have 
$L(S)=\max_{i\in\{ 1,\dots,p\}}\rho(x_0,\sigma_ix_0)$, for some point $x_0\in X$, and by corollary
\ref{margulis-lemma2}, $L(S)\geq \mu (n,a)>0$. 
Let us recall that for $0 \leq \eta \leq L$, an element $\gamma \in \Gamma$ is said 
to be $(L,\eta)$-straight if 
$$
\rho(x_0, \gamma\,x_0) > (L- \eta)l_S(\gamma) \,.
$$
In the following two propositions we give conditions under which
there are many non $(L,\eta)$-straight elements in $\Gamma$.
\begin{proposition}\label{elliptic}
Let $(X,g)$ be a Cartan Hadamard manifold whose sectional curvature satisfies
$-a^2 \leq K \leq -1$ and $\Gamma$ a discrete non virtually nilpotent group of isometries of $(X,g)$
generated by $S=\{ \sigma_1,\dots,\sigma_p\}$.
Let us assume that all $\sigma_i$'s are elliptic and that for all $\sigma_i\ne \sigma_j\in S$, the group 
$<\sigma_i,\sigma_j>$ fixes a point $y\in X$ or $\theta\in \partial X$. Let $\eta$ be a positive number such that 
$$\eta<\min\Bigg ({L\over 4}, {1\over 2}\Big (1-{1\over \ch (L)}\Big )^2, 
{1\over 2}\log\Big [{1\over 2}(\ch {L\over 2}+ {1\over \ch {L\over 2}})\Big ]\Bigg )\,,$$
where $L=L(S)= \inf_{x\in X}\max_{i\in\{ 1,\dots,p\}}\rho(x,\sigma_ix)
=\max_{i\in\{ 1,\dots,p\}}\rho(x_0,\sigma_ix_0)$,
then any 
$\gamma\in \Gamma$ with 
$l_S(\gamma )=2$, \textsl{i.e} 
$\gamma=\sigma_i^2$ or $\gamma=\sigma_i\sigma_j$, is not $(L,\eta /2)$-straight, that is, 
$\rho (x_0,\gamma x_0)\leq 2(L-\eta /2)$\,.
\end{proposition}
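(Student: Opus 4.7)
The plan is to treat the two types of length-$2$ element separately, splitting in each case on whether the relevant one-step displacements are small or close to $L$; in the "small" sub-case the triangle inequality alone suffices, while in the "close to $L$" sub-case one of the geometric lemmas \ref{square} and \ref{products} provides the quantitative improvement.

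First I would handle $\gamma = \sigma_i^2$. Since $\sigma_i$ is elliptic, $l(\sigma_i)=0$, so Lemma~\ref{square} applies with $\delta=0$. Writing $L_i := \rho(x_0,\sigma_i x_0)\leq L$, I split on whether $L_i\leq L-\eta/2$ or $L_i>L-\eta/2$. In the former case, the triangle inequality alone gives $\rho(x_0,\sigma_i^2 x_0)\leq 2L_i\leq 2(L-\eta/2)$. In the latter, Lemma~\ref{square} applied with lower bound $L_i$ yields
$$\rho(x_0,\sigma_i^2 x_0)\leq 2L_i-\Bigl(1-\frac{1}{\cosh L_i}\Bigr)^2\leq 2L-\Bigl(1-\frac{1}{\cosh(L-\eta/2)}\Bigr)^2,$$
and I would finish by checking the real-variable inequality $(1-1/\cosh(L-\eta/2))^2\geq\eta$. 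Setting $\phi(t):=(1-1/\cosh t)^2$, the derivative $\phi'(t)=2(1-1/\cosh t)\sinh t/\cosh^2 t$ satisfies $\phi'\leq 2$, whence $\phi(L-\eta/2)\geq\phi(L)-\eta>2\eta-\eta=\eta$ by the second hypothesis $\eta<\tfrac12(1-1/\cosh L)^2$.

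Next I would handle $\gamma=\sigma_i\sigma_j$ with $i\neq j$. Here I check whether one of $\rho(x_0,\sigma_i x_0)$, $\rho(x_0,\sigma_j x_0)$ is at most $L-\eta$; if so, the triangle inequality alone gives $\rho(x_0,\sigma_i\sigma_j x_0)\leq(L-\eta)+L=2(L-\eta/2)$. Otherwise both displacements lie in $[L-\eta,L]$, which is exactly the hypothesis of Lemma~\ref{products}. The common fixed point of $\langle\sigma_i,\sigma_j\rangle$ in $X\cup\partial X$ provided by the proposition is precisely what that lemma requires, and the first and third conditions on $\eta$ in the statement coincide with its assumptions, so the lemma yields $\rho(x_0,\sigma_i\sigma_j x_0)<2(L-\eta)<2(L-\eta/2)$.

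The main obstacle is bookkeeping the three thresholds on $\eta$: each serves a distinct role (the second feeds Lemma~\ref{square} in the diagonal case, the first and third feed Lemma~\ref{products} in the product case). In the diagonal case one must verify that the quantitative gain $(1-1/\cosh L_i)^2$ provided by Lemma~\ref{square} still exceeds $\eta$ after $L_i$ is replaced by the case-split threshold $L-\eta/2$; the bound $\phi'\leq 2$ makes this step routine, but it is the only non-mechanical input to the argument.
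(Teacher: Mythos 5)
Your proposal is correct and follows essentially the same strategy as the paper: in each of the two cases ($\sigma_i^2$ and $\sigma_i\sigma_j$) it uses the triangle inequality when a one-step displacement is small, and Lemma~\ref{square} (with $\delta=0$) resp.\ Lemma~\ref{products} when both displacements are close to $L$. The only difference is cosmetic: in the diagonal case you split at the threshold $L-\eta/2$ and verify $(1-1/\cosh(L-\eta/2))^2\geq\eta$ via the bound $\phi'\leq 2$, which is in fact slightly more careful than the paper's direct use of $(1-1/\cosh L)^2$, since Lemma~\ref{square} applied at the lower bound $L-\eta$ literally yields $\cosh(L-\eta)$ in the denominator.
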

\begin{proof}
Consider the case where $\gamma=\sigma_i^2$. If $\sigma_i$ is not 
$(L,\eta)$-straight, we have, by the triangle inequality, $\rho (x_0,\sigma_i^2x_0)\leq 2(L-\eta)$.
If $\sigma_i$ is $(L,\eta)$-straight, we have by lemma \ref{square},
$$\rho (x_0, \sigma_i^2x_0)\leq 2L-\Bigg(1-{1\over \ch L}\Bigg)^2\leq 2(L-\eta )\,.$$
Let us now consider the case where $\gamma=\sigma_i\sigma_j$, for $i\ne j$. If $\sigma_i$ or $\sigma_j$ 
is not $(L,\eta)$-straight, we have, by the triangle inequality,
$$\rho (x_0, \sigma_i\sigma_jx_0)\leq \rho (x_0, \sigma_ix_0)+\rho ( x_0, \sigma_jx_0)\leq L+(L-\eta )\,,$$
therefore, $\rho (x_0, \sigma_i\sigma_jx_0 )\leq 2 (L-\eta/2)$.

If $\sigma_i$ and $\sigma_j$ are $(L,\eta)$-straight, lemma \ref{products} implies that 
$\rho (x_0, \sigma_i\sigma_jx_0)\leq 2(L-\eta )$.
\end{proof}

In the next proposition we will assume that 
all elements $\gamma\in\Gamma$ whose algebraic length is less than or equal to $4$ have 
a displacement smaller than $\delta $ where 
\begin{equation}\label{delta} 
\delta = \log\big[\cosh ({L\over 4})\big]\,,
\end{equation}
and we set 
\begin{equation}\label{eta}
\eta = 10^{-3}\Big(1- \frac{\cosh({L\over4})}{\cosh({L\over2})}\Big)^4\,.
\end{equation}
We will find in that case many non $(L,\eta)$-straight elements. 
\begin{proposition}\label{length-6}
Let $(X,g)$ be a Cartan-Hadamard manifold whose sectional curvature satisfies $\-a^2 \leq K_g\leq -1$
and $G$ a discrete non virtually nilpotent group of isometries of $(X,g)$ generated by 
a set of two isometries
$\Sigma = \{\sigma_1, \sigma _2\}$. Let
$L =\inf_{x\in X}\max\{\rho(x,\sigma_1 x),\rho(x,\sigma_2 x)\}\,,$ and
$\Sigma = \{\sigma_1,\sigma_2\}$.
Let $\eta$ and $\delta$ be the numbers defined in (\ref{eta}) and (\ref{delta}).
We assume that $l(\gamma')< \delta$ for all 
$\gamma' \in G$ such that $l_{\Sigma}(\gamma')< 4$.
Then all elements $\gamma \in G$ such that  $l_{\Sigma}(\gamma)=6$ are not
$(L,\eta)$-straight.
\end{proposition}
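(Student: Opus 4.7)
The plan is to argue by contradiction. Suppose that some $\gamma\in G$ with $l_\Sigma(\gamma)=6$ satisfies $\rho(x_0,\gamma x_0)>6(L-\eta)$. First I would decompose $\gamma=\sigma_{i_1}\cdots\sigma_{i_6}=\alpha\beta$ with $\alpha=\sigma_{i_1}\sigma_{i_2}\sigma_{i_3}$ and $\beta=\sigma_{i_4}\sigma_{i_5}\sigma_{i_6}$, so that the standing hypothesis gives $l(\alpha),l(\beta)<\delta=\log\cosh(L/4)$. Setting $P_k=\sigma_{i_1}\cdots\sigma_{i_k}x_0$, the triangle inequality along the polygonal path $P_0,\dots,P_6$ combined with the straightness hypothesis forces the total path-slack $\sum_k\rho(P_{k-1},P_k)-\rho(P_0,P_6)$ to be at most $6\eta$. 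By isometric translation this propagates to every subword: in particular $\rho(x_0,\alpha x_0),\rho(x_0,\beta x_0)\in[3L-6\eta,3L]$, hence both exceed $L/2>\argch(e^\delta)=L/4$.

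Next I would apply Lemma~\ref{square} with threshold $L/2$ to $\alpha$ and to $\beta$, obtaining
\[
\rho(x_0,\alpha^2x_0)\le 2\rho(x_0,\alpha x_0)-A_0,\qquad A_0:=\Bigl(1-\frac{\cosh(L/4)}{\cosh(L/2)}\Bigr)^2,
\]
together with the analogous bound for $\beta^2$. Combining with the triangle inequality
\[
\rho(x_0,\gamma x_0)\le\rho(x_0,\alpha^2x_0)+\rho(\alpha^2x_0,\gamma x_0)=\rho(x_0,\alpha^2x_0)+\rho(\alpha x_0,\beta x_0)
\]
(the last equality from the isometry $\alpha^{-1}$, since $\alpha^2x_0=\alpha(\alpha x_0)$ and $\gamma x_0=\alpha(\beta x_0)$) and the straightness $\rho(x_0,\gamma x_0)>6L-6\eta$ together with $\rho(x_0,\alpha x_0)\le 3L$, I extract the lower bound
\[
\rho(\alpha x_0,\beta x_0)\;>\;A_0-6\eta.\tag{$\ast$}
\]

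The main technical step is to produce a matching upper bound on $\rho(\alpha x_0,\beta x_0)$ strictly smaller than $(\ast)$. For this I would exploit the fact that the triple $(x_0,\alpha x_0,\gamma x_0)$ has slack less than $6\eta$, so Lemma~\ref{three-points} places $\alpha x_0$ within $\argch(e^{6\eta})$ of the point on the geodesic $[x_0,\gamma x_0]$ at arc-distance $\rho(x_0,\alpha x_0)$ from $x_0$. Applying the same reasoning to the isometric image $(\alpha^{-1}x_0,x_0,\beta x_0)$ of that triangle under $\alpha^{-1}$, and comparing the two resulting geodesics by iterating Lemma~\ref{four-points} on four-tuples involving length-$2$ and length-$3$ subwords (each $\argch(e^x)\sim\sqrt{2x}$-propagation contributing a square-root loss), one locates $\beta x_0$ within $O(\sqrt{\eta})$ of the geodesic $[x_0,\gamma x_0]$ at arc-distance $\rho(x_0,\beta x_0)$. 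Since $|\rho(x_0,\alpha x_0)-\rho(x_0,\beta x_0)|\le 6\eta$, this yields $\rho(\alpha x_0,\beta x_0)\le 6\eta+2\argch(e^{6\eta})$.

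The contradiction is then quantitative: $(\ast)$ combined with this upper bound forces $A_0\le 12\eta+2\argch(e^{6\eta})$; substituting $\eta=10^{-3}A_0^4$ and using $\argch(e^{6\eta})\sim\sqrt{12\eta}$ for small $\eta$, one obtains an inequality of the form $A_0\le C A_0^2$ with $C$ an absolute constant absorbed by the prefactor $10^{-3}$, which fails for every $A_0\in(0,1]$. The hard part throughout is the quantitative transfer of the slack produced by Lemma~\ref{square} at the auxiliary triple $(x_0,\alpha x_0,\alpha^2x_0)$ into an upper bound on the \emph{diagonal} distance $\rho(\alpha x_0,\beta x_0)$; this is the reason the constant $10^{-3}$ and the fourth power of $A_0$ appear in the definition of $\eta$.
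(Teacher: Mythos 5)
Your lower bound $(\ast)$ is sound (it is essentially Lemma \ref{technical-lemma} combined with Lemma \ref{square} applied to the half-word $\alpha$), but the matching upper bound $\rho(\alpha x_0,\beta x_0)\le 6\eta+2\argch(e^{6\eta})$ carries the whole weight of the argument and is not established --- indeed it cannot follow from the straightness of $\gamma$ alone, which is all your sketch invokes at that step. Straightness of $\gamma=\alpha\beta$ controls, via Lemma \ref{three-points}, the position of $\alpha x_0$ relative to the geodesic $[x_0,\gamma x_0]$; it says nothing about $\beta x_0=\alpha^{-1}(\gamma x_0)$, which need not lie anywhere near that geodesic. Pushing the triangle $(x_0,\alpha x_0,\gamma x_0)$ forward by $\alpha^{-1}$ only tells you that $x_0$ is close to the geodesic $[\alpha^{-1}x_0,\beta x_0]$, a different geodesic with no controlled relation to $[x_0,\gamma x_0]$, and Lemma \ref{four-points} concatenates slack along a single chain of points --- it cannot bridge the two. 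Equivalently, $\rho(\alpha x_0,\beta x_0)=\rho(\alpha^2 x_0,\gamma x_0)$, and the orbit points $\alpha^2x_0$ and $\gamma x_0=\alpha\beta x_0$ branch apart after the common prefix $\alpha$: in the model of a free group acting on its tree (take $\gamma=(\sigma_1\sigma_2)^3$, $\alpha=\sigma_1\sigma_2\sigma_1$, $\beta=\sigma_2\sigma_1\sigma_2$) one has $\rho(\alpha x_0,\beta x_0)=6$ while $\gamma$ is perfectly straight, so no general comparison estimate can give $O(\sqrt{\eta})$ here; the small-displacement hypothesis would have to enter the upper bound, and in your argument it only enters the lower bound. (A minor slip besides: with your $A_0=\bigl(1-\cosh(L/4)/\cosh(L/2)\bigr)^2$ one has $\eta=10^{-3}A_0^2$, not $10^{-3}A_0^4$; this does not affect the final numerology.)

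The paper circumvents exactly this obstruction by a combinatorial reduction rather than a halving. Lemma \ref{alpha-square-alpha-beta-alpha} shows that a $(L,\eta)$-straight word of length $6$ can contain neither $\alpha^2$ nor $\alpha\beta\alpha$ for nontrivial $\alpha,\beta$ of length at most $2$, and one then checks that every reduced word of length $6$ must contain one of these patterns. The point of the pattern $\alpha\beta\alpha$ is that the repeated letter produces the chain $x_0,\ \alpha\beta x_0,\ \alpha\beta\alpha x_0,\ (\alpha\beta)^2x_0$, whose consecutive-triple slacks are controlled by the straightness of the genuine subwords $\alpha\beta\alpha$ and $\alpha\beta$; Lemma \ref{four-points} then yields $\rho\bigl(x_0,(\alpha\beta)^2x_0\bigr)\ge 2\rho(x_0,\alpha\beta x_0)-(\text{small})$, which contradicts Lemma \ref{square} applied to $\alpha\beta$ (length at most $4$, hence displacement $<\delta$). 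Your split $l_\Sigma(\alpha)=l_\Sigma(\beta)=3$ destroys this structure because $\alpha^2$ is not a subword of $\gamma$ and nothing ties $\alpha^2x_0$ back to the chain of prefixes of $\gamma$. To repair the proof you would need to replace the halving by the paper's pattern analysis, or find another mechanism by which the displacement hypothesis constrains $\rho(\alpha x_0,\beta x_0)$ from above.
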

We will need the following lemmas.
\begin{lemma}\label{technical-lemma}
Let $\gamma =a \gamma' b \in G$ be such that 
$l_{\Sigma}(\gamma) = l_{\Sigma}(a) +l_{\Sigma}(\gamma')+l_{\Sigma}(b)$.
If $\gamma$ is $(L,\eta)$-straight, then $\gamma'$ is  $(L,C \,\eta)$-straight where
$C= \frac{l_{\Sigma}(\gamma)}{l_{\Sigma}(\gamma')}$.
\end{lemma}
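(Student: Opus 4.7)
The plan is to derive the inequality by a direct triangle-inequality chase along the orbit of $x_0$ under the factorization $\gamma = a\gamma' b$. First I would write
\[
\rho(x_0,\gamma x_0) \leq \rho(x_0,a x_0) + \rho(a x_0, a\gamma' x_0) + \rho(a\gamma' x_0, a\gamma' b x_0),
\]
and use the isometry invariance of $\rho$ to rewrite the last two summands as $\rho(x_0,\gamma' x_0)$ and $\rho(x_0, b x_0)$ respectively. The next step is the trivial word-length bound $\rho(x_0, \alpha x_0) \leq L \cdot l_\Sigma(\alpha)$, valid for every $\alpha \in G$ since $L = \max_i \rho(x_0, \sigma_i x_0)$ and the triangle inequality can be iterated along any geodesic word representation of $\alpha$; I would apply it to $\alpha = a$ and $\alpha = b$.

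Combining these with the $(L,\eta)$-straightness of $\gamma$ and the additivity hypothesis $l_\Sigma(\gamma) = l_\Sigma(a) + l_\Sigma(\gamma') + l_\Sigma(b)$, I would rearrange to obtain
\[
\rho(x_0,\gamma' x_0) > (L-\eta)\,l_\Sigma(\gamma) - L\bigl(l_\Sigma(a) + l_\Sigma(b)\bigr) = L\cdot l_\Sigma(\gamma') - \eta\cdot l_\Sigma(\gamma).
\]
Dividing both sides by $l_\Sigma(\gamma')$ and recognising the factor $l_\Sigma(\gamma)/l_\Sigma(\gamma') = C$ yields $\rho(x_0,\gamma' x_0) > (L - C\eta)\,l_\Sigma(\gamma')$, which is precisely the $(L, C\eta)$-straightness of $\gamma'$.

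There is no serious geometric obstacle here: neither the negative curvature nor the CAT$(-1)$ machinery developed in the preceding lemmas enters, only the triangle inequality and the invariance of $\rho$ under the action. The only point that requires attention is the use of the no-cancellation assumption $l_\Sigma(\gamma) = l_\Sigma(a) + l_\Sigma(\gamma') + l_\Sigma(b)$: it is exactly what allows the straightness defect $\eta\, l_\Sigma(\gamma)$ on $\gamma$ to be reabsorbed into the proportional defect $C\eta\cdot l_\Sigma(\gamma')$ on the subword $\gamma'$, and it also explains why the loss factor must scale with the ratio $C$ rather than being preserved.
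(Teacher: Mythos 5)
Your argument is correct and is essentially the paper's own proof: both rely on the triangle inequality applied to the decomposition $\gamma = a\gamma' b$ (your insertion of the intermediate points $ax_0$ and $a\gamma' x_0$ followed by isometry invariance is just the explicit form of it), the bound $\rho(x_0,\alpha x_0)\leq L\, l_\Sigma(\alpha)$ for $\alpha=a,b$, and the additivity of lengths to convert the defect $\eta\, l_\Sigma(\gamma)$ into $C\eta\, l_\Sigma(\gamma')$. No issues.
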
 
\begin{proof}
Let us note that by definition of $L=L(\Sigma)$, we have for any $\gamma \in G$
$$
\rho(x_0,\gamma x_0) \leq L\,. l_{\Sigma}(\gamma)\,.
$$
By triangle inequality we have 
$$
\rho(x_0,\gamma x_0) \leq \rho(x_0,a x_0)+ \rho(x_0,\gamma' x_0)+\rho(x_0,b x_0)\,,
$$
hence by assumption on $\gamma$ we get
$$
(L-\eta) \,l_{\Sigma}(a \gamma' b) \leq L\, \big(l_{\Sigma}(a) +l_{\Sigma}(b)\big) +\rho(x_0,\gamma' x_0)
$$
and therefore,
$$
\rho(x_0,\gamma' x_0) \geq L \,l_{\Sigma}(\gamma') - \eta \, l_{\Sigma}(\gamma) 
\geq (L -C\, \eta)l_{\Sigma}(\gamma')\,.
$$
\end{proof}
\begin{lemma}\label{alpha-square-alpha-beta-alpha}
Let $\alpha$, $\beta$ be two elements of $G$ distinct of the neutral element and such that
$l_{\Sigma}(\alpha) \leq 2$,  $l_{\Sigma}(\beta) \leq 2$. Under the assumptions of the proposition
\ref{length-6}, if $\gamma$ is $(L,\eta)$-straight with $l_{\Sigma}(\gamma) = 6$, then any reduced word
representing $\gamma$ does not contain $(i)$ $\alpha ^2$ or $(ii)$ $\alpha \beta \alpha$.
\end{lemma}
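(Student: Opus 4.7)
The plan is to argue by contradiction in both cases. Assume $\gamma$ admits a reduced decomposition $\gamma = u\,w\,v$ with $w = \alpha^2$ in case (i), respectively $w = \alpha\beta\alpha$ in case (ii). By Lemma \ref{technical-lemma}, every ``middle chunk'' of this decomposition (in particular $\alpha$, $\beta$, $\alpha^2$, $\alpha\beta$, $\beta\alpha$, $\alpha\beta\alpha$ where they appear) is $(L, C\eta)$-straight with $C = 6/l_{\Sigma}(\text{chunk})$, so that $\rho(x_0, w'x_0) \geq L\,l_{\Sigma}(w') - 6\eta$ for every such $w'$. Moreover, the hypothesis of Proposition \ref{length-6} forces $l(\alpha), l(\beta) < \delta = \log\cosh(L/4)$ since $l_{\Sigma}(\alpha), l_{\Sigma}(\beta) \leq 2 < 4$.

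Case (i) is handled by a direct application of Lemma \ref{square} to $\alpha$ at the basepoint $x_0$. Taking $L_0 = \rho(x_0, \alpha x_0) \geq L\,l_{\Sigma}(\alpha) - 6\eta$, which exceeds $L/2$ under the imposed bound on $\eta$, and using $l(\alpha) \leq \delta$, one obtains
\[
\rho(x_0, \alpha^2 x_0) \leq 2L_0 - \Big(1 - \tfrac{\cosh(L/4)}{\cosh L_0}\Big)^{\!2} \leq 2L\,l_{\Sigma}(\alpha) - \Big(1 - \tfrac{\cosh(L/4)}{\cosh(L/2)}\Big)^{\!2}.
\]
Comparing with the straightness lower bound $\rho(x_0, \alpha^2 x_0) \geq 2L\,l_{\Sigma}(\alpha) - 6\eta$ forces $(1 - \cosh(L/4)/\cosh(L/2))^2 \leq 6\eta$; this clashes with the definition $\eta = 10^{-3}(1 - \cosh(L/4)/\cosh(L/2))^4$ once one observes that $(1 - \cosh(L/4)/\cosh(L/2))^2 < 1$.

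For case (ii), set $p_0 = x_0,\ p_1 = \alpha\,x_0,\ p_2 = \alpha\beta\,x_0,\ p_3 = \alpha\beta\alpha\,x_0$. The straightness of the subwords $\alpha\beta$ and $\beta\alpha$ bounds the consecutive triangle defects $\rho(p_{i-1}, p_i) + \rho(p_i, p_{i+1}) - \rho(p_{i-1}, p_{i+1})$ by $6\eta$, while straightness of $\alpha\beta\alpha$ itself bounds the total defect by $6\eta$. Applying Lemma \ref{three-points} to the triangles $(p_0, p_1, p_3)$ and $(p_0, p_2, p_3)$ then places both $p_1$ and $p_2$ within distance $\argch(e^{6\eta})$ of the geodesic $[p_0, p_3]$. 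Thus $\alpha$ sends the pair $(p_0, p_2)$, essentially on this geodesic, to the pair $(p_1, p_3)$, also essentially on it, and so acts along the geodesic as an approximate translation of length $\approx L\,l_{\Sigma}(\alpha)$. Combining this with the rigidity $l(\alpha) < \delta$ through a comparison-geometry argument modelled on Lemma \ref{square} (applied at a near-fixed point of $\alpha$) yields an upper bound of the shape $\rho(x_0, \alpha\beta\alpha\,x_0) \leq L(2l_{\Sigma}(\alpha) + l_{\Sigma}(\beta)) - c(L)$ with $c(L)$ comparable to $(1 - \cosh(L/4)/\cosh(L/2))^2$, contradicting the straightness lower bound thanks to the definition of $\eta$.

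The main obstacle is case (ii): since $\alpha$ and $\beta$ need not share a common fixed point, Lemma \ref{products} is not immediately applicable, and we must instead parlay the near-collinearity coming from Lemma \ref{three-points} into a rigidity statement about $\alpha$ that mimics the quantitative argument of case (i). The various error terms (distance from $p_1,p_2$ to the geodesic, the triangle defects, the slack in Lemma \ref{square}) must all be tracked carefully so that they fit inside the tight budget $6\eta = 6\cdot 10^{-3}(1 - \cosh(L/4)/\cosh(L/2))^4$ on which the definition of $\eta$ has been calibrated.
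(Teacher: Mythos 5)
Your case (i) is correct and is essentially the paper's argument: extract straightness of $\alpha$ and $\alpha^2$ from Lemma \ref{technical-lemma}, apply Lemma \ref{square} to $\alpha$ (legitimate since $l_\Sigma(\alpha)\leq 2<4$ forces $l(\alpha)\leq\delta$ and $\rho(x_0,\alpha x_0)>L/2>\argch(e^\delta)$), and observe that the resulting deficit $\bigl(1-\cosh(L/4)/\cosh(L/2)\bigr)^2$ overwhelms the budget $6\eta$ by the calibration of $\eta$.

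Case (ii), however, has a genuine gap. You correctly set up the four points $p_0=x_0$, $p_1=\alpha x_0$, $p_2=\alpha\beta x_0$, $p_3=\alpha\beta\alpha x_0$ and the near-collinearity coming from the straightness of the subwords, but the decisive step --- converting ``$\alpha$ approximately translates the two distant pairs $(p_0,p_1)$ and $(p_2,p_3)$ along a common near-geodesic by $\gtrsim L/2$'' together with ``$\alpha$ nearly fixes some point $y$'' into the quantitative deficit $c(L)\sim\bigl(1-\cosh(L/4)/\cosh(L/2)\bigr)^2$ --- is only asserted, not proved. Lemma \ref{square} as stated does not apply to this configuration (it compares $\rho(x_0,\gamma x_0)$ with $\rho(x_0,\gamma^2 x_0)$ at a single basepoint), so you would need to formulate and prove a new comparison lemma, tracking how the errors $\argch(e^{6\eta})$ propagate; this is exactly where the difficulty of (ii) lives, and you acknowledge as much without resolving it. The paper sidesteps the problem entirely: it works instead with the element $\alpha\beta$, whose displacement is $\leq\delta$ \emph{by hypothesis} because $l_\Sigma(\alpha\beta)\leq 4$, applies Lemma \ref{square} verbatim to get an upper bound on $\rho\bigl(x_0,(\alpha\beta)^2x_0\bigr)$, and uses Lemma \ref{four-points} on the four points $x_0$, $\alpha\beta x_0$, $\alpha\beta\alpha x_0$, $(\alpha\beta)^2x_0$ to transfer the straightness of the genuine subwords $\alpha\beta\alpha$ and $\alpha\beta$ into a conflicting lower bound on $\rho\bigl(x_0,(\alpha\beta)^2x_0\bigr)$, even though $(\alpha\beta)^2$ itself is not a subword of $\gamma$. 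To repair your proof, either carry out that substitute rigidity lemma in full, or switch to the paper's $(\alpha\beta)^2$ device.
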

Assuming the lemma \ref{alpha-square-alpha-beta-alpha}, the proof of the proposition \ref{length-6}
can be finished as follows: 
\begin{proof}
Let $\gamma \in G$ of length $l_{\Sigma}(\gamma)=6$. Let us write $\gamma$ as a reduced word
in the generators of $\Sigma$, $\gamma = \sigma_{i_1}^{p_1}\,.\, \dots\,. \sigma_{i_k}^{p_k}$,
where $\sigma _{i_j} = \sigma_1$ or  $\sigma _{i_j} = \sigma_2$, $p_j \in \mathbb Z ^{\ast}$, $i_j \neq i_{j+1}$
and $i_j =i_{j+2}$. Arguing by contradiction we assume that $\gamma$ is 
$\eta$-straight. Then, by lemma \ref{alpha-square-alpha-beta-alpha} (i), all $p_j$ are
equal to $+1$ or $-1$ and in particular we have $k=6$. Therefore
$ \gamma =\sigma_{i_1}^{p_1}\,.\,\sigma_{i_2}^{p_2}\,.\,\sigma_{i_3}^{p_3}\,.\, \sigma_{i_4}^{p_4}\,.\,
\sigma_{i_5}^{p_5}\,. \sigma_{i_6}^{p_6}\,.$
By lemma \ref{alpha-square-alpha-beta-alpha} (ii) we also have $p_{j+2} \neq p_j$ hence 
$p_{j+2} = -p_j$ so 
$ \gamma =\sigma_{i_1}^{p_1}\,.\,\sigma_{i_2}^{p_2}\,.\,\sigma_{i_1}^{-p_1}\,.\, \sigma_{i_2}^{-p_2}\,.\,
\sigma_{i_1}^{p_1}\,. \sigma_{i_2}^{p_2}\,,$
which is impossible by lemma \ref{alpha-square-alpha-beta-alpha} (ii)
with $\alpha=\sigma_{i_1}^{p_1}\,.\,\sigma_{i_2}^{p_2}$
and $\beta = \sigma_{i_1}^{-p_1}\,.\, \sigma_{i_2}^{-p_2}$. This finishes 
the proof of proposition \ref{length-6}.
\end{proof}

Let us now prove the lemma \ref{alpha-square-alpha-beta-alpha}:
\begin{proof}
We first claim that if $L$, $\eta$ and $\delta$ are chosen as in the proposition
\ref{length-6} then we have 
\begin{equation}\label{eta-L-1}
\eta \leq {L\over 4000} 
\end{equation} 
and 
\begin{equation}\label{eta-L-2}
12 \eta +\argch \big(e^{12\eta}\big) \leq {1\over 4}\Big( 1- \frac{e^\delta}{\cosh(L/2)}\Big)\,.
\end{equation}
Proof of the claim.
By definition of $\eta$, (cf. (\ref{eta})), we have
$$
1000 \eta = \Big(1-\frac{\cosh(L/4)}{\cosh(L/2)}\Big)^{4}
$$
therefore
$$
1000 \eta < \frac{\cosh(L/2)-\cosh(L/4)}{\cosh(L/2)}
$$
and
$$
1000 \eta < \frac{\sinh(L/2) . L/4}{\cosh(L/2)} <\frac{L}{4},
$$
which proves the first inequality of the claim.
On the other hand, let $x\in \,]0,1[$, then 
$e^{x} \leq 1+2x \leq \cosh(2\sqrt{x})$. Choosing $x=12\eta$ we obtain, using 
the inequality $\eta < \frac{1}{1000}$, that 
$$
12\eta +\argch (e^{12\eta})\leq 12\eta + 2\sqrt{12\eta}< \frac{1}{4}\sqrt{1000\eta}
$$
therefore we get
$$
12\eta +\argch (e^{12\eta})\leq \frac{1}{4}\Big(1-\frac{\cosh(L/4)}{\cosh(L/2}\Big)^{2}
\leq \frac{1}{4}\Big(1-\frac{e^{\delta}}{\cosh(L/2)}\Big),
$$
which ends the proof of the claim.

{\it Proof of lemma \ref{alpha-square-alpha-beta-alpha} (i)}. 
Let us assume that $\gamma = a \alpha^2 b$ is $(L,\eta)$-straight, and $l_{\Sigma}(\gamma) = 6$.
Then by lemma \ref{technical-lemma} $\alpha$ and $\alpha^2 $ are $(L,3\eta)$-straight.
We then get with (\ref{eta-L-1})
$$
\rho(x_0, \alpha x_0) > (L -3 \eta)l_\Sigma (\alpha) > {L\over 2}\,. 
$$
On the other hand since $l(\alpha) \leq \delta$ and $\argch [e^\delta] = {L\over 4} <  {L\over 2}$,
we can apply lemma \ref{square} to $\alpha$ replacing $L$ by $L/2$ and get
$$
\rho(x_0, \alpha^2 x_0) < 2 \rho(x_0, \alpha x_0)- \Big( 1- \frac{e^\delta}{\cosh\big(L/2\big)}\Big)^2\,.   
$$
We then get by the choice of $\eta$, cf. (\ref{eta}), 
$$
\rho(x_0, \alpha^2 x_0) < (L- 3\eta)\,l_\Sigma (\alpha^2)\,
$$
which contradicts the fact that $\alpha^2$ is $(L,3\eta)$-straight
and concludes the proof of 
lemma \ref{alpha-square-alpha-beta-alpha} $(i)$.

{\it Proof of lemma \ref{alpha-square-alpha-beta-alpha} (ii)}.
Let us assume that $\gamma = a \alpha \beta \alpha b$ is $(L,\eta)$-straight, and $l_{\Sigma}(\gamma) = 6$.
The Lemma \ref{technical-lemma} says that $\alpha \beta \alpha$ is $(L,2\eta)$-straight
and that $\alpha \beta $ is  $(L,C'\,\eta)$-straight where 
$C'= 2 \frac{l_{\Sigma}(\alpha \beta \alpha)}{l_{\Sigma}(\alpha \beta)}$.
Since $\alpha \beta \alpha$ is $(L,2\eta)$-straight,
we have by triangle inequality
$$
(L-2\eta)l_{\Sigma}(\alpha \beta \alpha) \leq
2\rho(x_0, \alpha \, x_0) + L\,l_{\Sigma}(\beta) 
$$ 
and therefore 
$$
2\rho(x_0, \alpha \, x_0) \geq 
(L-2\eta)l_{\Sigma}(\alpha \beta \alpha) - L\,l_{\Sigma}(\beta)= L\,l_{\Sigma}(\alpha ^2)
-2\eta \, l_{\Sigma}(\alpha \beta \alpha)
$$
hence we obtain
$$
\rho(x_0, \alpha \, x_0) \geq L\,l_{\Sigma}(\alpha) -\eta \, l_{\Sigma}(\alpha \beta \alpha)\,,
$$
and since $l_{\Sigma}(\alpha)\leq 2$ and $l_{\Sigma}(\beta)\leq 2$,
we deduce that
$$
\rho(x_0, \alpha \, x_0) \geq (L-4 \eta) \,l_{\Sigma}(\alpha) \,,
$$
that is $\alpha$ is $(L,4\eta)$-straight.
We set $x_1=\alpha \beta \,x_0$,  $x_2=\alpha \beta \alpha \, x_0$ and 
$x_3= (\alpha \beta)^2 \, x_0 = \alpha \beta \alpha \beta \, x_0$.
We get, since $\alpha \beta \alpha$ is $(L,2\eta)$-straight,
\begin{eqnarray*}\label{I}
\rho(x_0, x_1) + \rho(x_1, x_2) -\rho(x_0, x_2)
&=& \rho(x_0, \alpha \beta \, x_0)+  \rho(x_0, \alpha \, x_0)- \rho(x_0, \alpha \beta \alpha \, x_0)\\
&\leq& L\big[l_\Sigma (\alpha \beta)+ l_\Sigma (\alpha)\big]
-(L-2 \eta)l_\Sigma (\alpha \beta \alpha) \\
&\leq& 12 \eta \,.  
\end{eqnarray*}
In the same way, since $\alpha \beta$ is $(L,C'\,\eta)$-straight with 
$C' =2 \frac{l_{\Sigma}(\alpha \beta \alpha)}{l_{\Sigma}(\alpha \beta)}$, we have 
\begin{eqnarray*}\label{II}
\rho(x_1, x_2) + \rho(x_2, x_3) -\rho(x_1, x_3)
&=& \rho(x_0, \alpha \, x_0)+  \rho(x_0, \beta \, x_0)- \rho(x_0, \alpha \beta  \, x_0)\\
&\leq& L\big[l_\Sigma (\alpha)+ l_\Sigma (\beta)\big]
-(L-C' \eta)l_\Sigma (\alpha \beta) \\
&\leq& 2 \eta l_\Sigma (\alpha \beta \alpha) \\ 
&\leq& 12 \eta \,. 
\end{eqnarray*}
We can therefore apply the lemma \ref{four-points} and get
\begin{eqnarray*}\label{III}
2 \rho(x_0, \alpha \beta  \, x_0) - \rho\big(x_0, (\alpha \beta)^2  \, x_0\big)
&\leq& \rho(x_0, \alpha \beta  \, x_0) + \rho(x_0, \alpha \, x_0)+  \rho(x_0, \beta \, x_0)
-\rho\big(x_0, (\alpha \beta)^2  \, x_0\big)\\
&=&
\rho(x_0, x_1) + \rho(x_1, x_2) +\rho(x_2, x_3) - \rho(x_0, x_3)\\
&\leq& 
\Big(1+ \frac{\rho(x_0, \beta \, x_0)}{\rho(x_0, \alpha \, x_0)}\Big)\big( 12 \eta + Argcosh [e^{12 \eta}]\big)\\
&\leq&
\Big(1+ \frac{L\,l_{\Sigma}(\beta)}{(L-4\eta)l_{\Sigma}(\alpha)}\Big)\,. 
{1\over 4}\Big(1-\frac{e^\delta}{\cosh(L/2)}\Big)^2 \,, 
\end{eqnarray*}
the last inequality coming from (\ref{eta-L-2}) and the fact that
$\alpha$ is $(L,4\eta)$-straight.
From (\ref{eta-L-1}) we therefore get 
\begin{equation}\label{hyp1}
\rho\big(x_0, (\alpha \beta)^2  \, x_0\big)
\geq 2 \rho(x_0, \alpha \beta  \, x_0) - \Big(1-\frac{e^\delta}{\cosh(L/2)}\Big)^2 \,.
\end{equation}
On the other hand we have seen that  $\alpha \beta$ is
$(L,C'\,\eta)$-straight with $C'= 2 \frac{l_{\Sigma}(\alpha \beta \alpha)}{l_{\Sigma}(\alpha \beta)}$,
so that
$$
\rho(x_0, \alpha \beta  \, x_0) \geq (L - C'\eta)\,l_{\Sigma}(\alpha \beta)
\geq 2L - 2\eta \,l_{\Sigma}(\alpha \beta \alpha)\,,
$$
and since $l_{\Sigma}(\alpha \beta \alpha)\leq 6$ the above inequality gives with (\ref{eta-L-1})
\begin{equation}\label{hyp2}
\rho(x_0, \alpha \beta  \, x_0) \geq L\,.
\end{equation}
By assumption, since $l_{\Sigma}(\alpha \beta) \leq 4$, the displacement of $\alpha \beta$ satisfies
$l(\alpha \beta) \leq \delta$, and with (\ref{hyp2}) we can apply the lemma \ref{square} to get 
$$
\rho\big(x_0, (\alpha \beta)^2 x_0\big) 
\leq  2 \rho(x_0, \alpha \beta  \, x_0) -\Big(1-\frac{e^\delta}{\cosh(L/2)}\Big)^2
$$
which contradicts (\ref{hyp1}). This concludes the proof of the lemma 
\ref{alpha-square-alpha-beta-alpha} and the proposition \ref{length-6}.
\end{proof}

\section{Mapping the Cayley graph of $G$ into $X$.}
Let $G$ be a finitely generated discrete group of isometries of $(X,g)$ a 
Cartan Hadamard manifold of sectional curvature $-a^2 \leq K \leq -1$. 
We consider $S$ a finite generating set of $G$ and 
the Cayley graph $\mathcal{G}_{S}$ of $G$ associated to $S$. We define a distance 
$d_S$ on $\mathcal{G}_{S}$ in the following way: each edge is isometric to the segment $[0,1] \subset \mathbb{R}$
and the distance $d_S(\gamma, \gamma')$ between two vertices $\gamma$, $\gamma'$ of $\mathcal{G}_{S}$
is the word distance $d_S(\gamma, \gamma')= l_S(\gamma^{-1} \gamma')$.
The group $G$ acts by isometries on $(\mathcal{G}_{S},d_S)$ and on $(X,g)$.
The goal of this section is to construct for each number $c$ large enough an equivariant 
map $f_c : \mathcal{G}_{S} \to X$ such that $f_c$ is Lipschitzian of Lipschitz constant $c$.

\subsection{Poincar\'e series, measures and convexity.}
We first consider the Poincar\'e series,
\begin{equation}\label{poincare}
P_c(s,x,y)= \Sigma_{\gamma \in G} e^{-cd_S (s,\gamma)}\cosh\big[\rho(x,\gamma y)\big]
\end{equation}
where $c\in \mathbb{R}_{+}$, $s\in \mathcal{G}_S$ and $x, y \in X$.
\begin{lemma}\label{equiv-poincare}
For all $s\in \mathcal{G}_{S}$, $x, y, x_0, y_0 \in X$, $c\in \mathbb R$ and $\gamma_0 \in G$ we have

$(i) $ $P_c(\gamma _0 s,\gamma _0 x,y)=P_c(s,x,y)$ 

$(ii)$ $P_c(s,x,y) \leq P_c(s,x_0,y_0)\,e^{\rho(x_0,x) + \rho(y_0,y)}$. 

\noindent 
In particular the convergence of the series is independant of the choice of the points $x, y \in X$.
\end{lemma}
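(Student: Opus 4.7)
The statement has two parts; both are formal manipulations that follow directly from the fact that $G$ acts by isometries on $(\mathcal{G}_S, d_S)$ and on $(X,g)$, together with the triangle inequality and a basic monotonicity property of $\cosh$.

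For part $(i)$ the plan is simply a change of summation variable. Since $G$ acts by isometries on the Cayley graph, $d_S(\gamma_0 s, \gamma) = d_S(s, \gamma_0^{-1}\gamma)$, and since $\gamma_0$ is an isometry of $(X,g)$, $\rho(\gamma_0 x, \gamma y) = \rho(x, \gamma_0^{-1}\gamma y)$. Setting $\gamma' = \gamma_0^{-1}\gamma$ and noting that $\gamma \mapsto \gamma_0^{-1}\gamma$ is a bijection of $G$, the defining sum for $P_c(\gamma_0 s, \gamma_0 x, y)$ becomes term by term the defining sum for $P_c(s,x,y)$.

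For part $(ii)$ the plan is to bound each summand. By the triangle inequality applied twice, and using that $\gamma$ is an isometry of $X$, we get
\[
\rho(x,\gamma y) \leq \rho(x,x_0) + \rho(x_0,\gamma y_0) + \rho(\gamma y_0, \gamma y) = \rho(x_0,\gamma y_0) + \rho(x_0,x) + \rho(y_0,y).
\]
Now use the elementary inequality $\cosh(a+b) \leq \cosh(a)\,e^{b}$ valid for all $a\geq 0$ and $b\geq 0$ (a direct consequence of $\cosh(a+b) = \cosh a\cosh b + \sinh a \sinh b \leq \cosh a (\cosh b + \sinh b)$). Taking $a = \rho(x_0,\gamma y_0)$ and $b = \rho(x_0,x) + \rho(y_0,y)$, and using that $\cosh$ is increasing on $[0,\infty)$, this yields
\[
\cosh[\rho(x,\gamma y)] \leq \cosh[\rho(x_0,\gamma y_0)]\, e^{\rho(x_0,x) + \rho(y_0,y)}.
\]
Multiplying by the common factor $e^{-c d_S(s,\gamma)}$ and summing over $\gamma \in G$ gives the desired inequality.

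The final assertion on independence of convergence is an immediate corollary: applying $(ii)$ in both directions shows that $P_c(s,x,y)$ and $P_c(s,x_0,y_0)$ differ by at most a finite multiplicative factor, so the two series converge or diverge together. The only point requiring any care is the $\cosh$ inequality; everything else is bookkeeping. I do not anticipate any genuine obstacle.
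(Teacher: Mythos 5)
Your proof is correct and follows essentially the same route as the paper: the change of summation variable $\gamma\mapsto\gamma_0\gamma'$ for the equivariance (which the paper dismisses as ``straightforward''), and the triangle inequality combined with $\cosh(a+b)\leq \cosh(a)\,e^{b}$ for the comparison estimate. No issues.
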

\begin{proof}
The equivariance property of the Poincar\'e series is straightforward.
On the other hand by triangle inequality we have
\begin{eqnarray*}
P_c(s,x,y)
&=& \Sigma_{\gamma \in G} e^{-cd_S (s,\gamma)}\cosh\big[\rho(x,\gamma y)\big] \\
&\leq& \Sigma_{\gamma \in G} e^{-cd_S (s,\gamma)}\cosh\big[\rho(x_0,\gamma y_0)+\rho(x_0,x) + \rho(y_0,y)\big]
\end{eqnarray*}
hence we get
$$
P_c(s,x,y) \leq P_c(s,x_0,y_0)\,.e^{\rho(x_0,x) + \rho(y_0,y)}\,.
$$
\end{proof}
The critical exponent of this series is defined as 
$$
c_0 =: \inf \{ c>0 \quad / \quad P_c(s,x,y)< \infty\}\,.
$$
Let $x_0$ be the point of $X$ such that $L(S)= \max _{i} \{\rho(x_0,\sigma_i x_0) \}$.
By the triangle inequality we have for all $\gamma \in \Gamma$,
$\rho(x_0, \gamma x_0) \leq L(S) \,l_S(\gamma)$, therefore 
$$
P_c(e,x_0,x_0)\leq \Sigma _{\gamma \in \Gamma}e^{\big(c-L(S)\big) \,l_S(\gamma)}\,.
$$
On the other hand, by definition of $\Ent_S \Gamma$, we have
$ \Sigma _{\gamma \in \Gamma}e^{-t\,l_S(\gamma)} < \infty $ for all $t > \Ent_S \Gamma$,
hence we have proved that
\begin{equation}
c_0 \leq \Ent_S \Gamma + L(S)\,.
\end{equation}

{\bf We now consider untill the end of this section a $c\in \mathbb{R}_{+}$ such that $P_c(s,x,y)<\infty$.}

Let us choose a probability measure $\mu$ with smooth density and compact support on $X$.
For each $s\in \mathcal{G}_S$ let us define the measure on $X$
\begin{equation}\label{patterson}
\mu_s^c = \Sigma _{\gamma \in G} e^{-cd_S(s,\gamma)}\gamma_{\ast}\mu
\end{equation}
and the function $\mathcal{B}^{c} :G \times X \to \mathbb{R}$,
\begin{equation}\label{B}
\mathcal{B}^{c}(s,x) = \int_X \cosh\big[\rho(x,z)\big]d\mu_s^c(z).
\end{equation}
In the following lemmas \ref{convexite}, \ref{formules} and corollary \ref{B-convex} we
show that $ x\to \mathcal{B}^{c}(s,x)$ is a strictly convex $C^2$ function
such that 
$$
\lim_{x\to \infty}\mathcal{B}^{c}(s,x)= +\infty\,.
$$

\begin{lemma}\label{convexite}
Let $c$ be such that $P_c(s,x,y) <\infty$. For all
$s\in\mathcal{G}_S$ and $x\in X$, we have $\mathcal{B}^{c}(s,x)<\infty$.
Moreover,
the function $x\to \mathcal{B}^{c}(s,x)$ is stricly convex and  
$\lim_{x\to \infty} \mathcal{B}^{c}(s,x)= +\infty$.
\end{lemma}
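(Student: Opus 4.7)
The plan is to establish the three assertions in turn, using the compact support of $\mu$, the finiteness of $P_c$, and pointwise convexity of $\cosh\rho$ in a CAT$(-1)$ space.

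First, for finiteness of $\mathcal{B}^{c}(s,x)$, I would pick any $p\in X$ and $R>0$ such that $\mathrm{supp}(\mu)\subset B(p,R)$. For every $\gamma\in G$ and $z\in\mathrm{supp}(\mu)$, isometry invariance gives $|\rho(x,\gamma z)-\rho(x,\gamma p)|\leq \rho(\gamma p,\gamma z)\leq R$, and the elementary inequality $\cosh(a+b)\leq e^{|b|}\cosh a$ yields $\cosh[\rho(x,\gamma z)]\leq e^{R}\cosh[\rho(x,\gamma p)]$. Integrating against $e^{-cd_S(s,\gamma)}\gamma_\ast\mu$, summing over $\gamma\in G$, and using that $\mu$ is a probability measure, one gets $\mathcal{B}^{c}(s,x)\leq e^{R}P_{c}(s,x,p)<\infty$ by hypothesis; the same estimate also shows that $\mathcal{B}^{c}(s,\cdot)$ converges uniformly on compact subsets of $X$, hence is continuous.

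For strict convexity I would avoid differentiating under the sum and instead verify strict midpoint convexity geometrically. Pick distinct points $a,b\in X$ with midpoint $m$, and for $z\in X$ build a comparison triangle $(\bar a,\bar b,\bar z)$ in $\mathbf{H}^{2}$ with matching side lengths, letting $\bar m$ be the midpoint of $[\bar a,\bar b]$. The CAT$(-1)$ inequality gives $\rho(m,z)\leq d(\bar m,\bar z)$, and the hyperbolic median identity
$$\cosh d(\bar a,\bar z)+\cosh d(\bar b,\bar z)=2\cosh d(\bar m,\bar z)\,\cosh\!\bigl(d(\bar a,\bar b)/2\bigr)$$
combined with $\cosh(\rho(a,b)/2)>1$ (since $a\neq b$) yields the strict pointwise inequality
$$2\cosh[\rho(m,z)]<\cosh[\rho(a,z)]+\cosh[\rho(b,z)]\,.$$
Integrating against the strictly positive measure $\mu_s^{c}$ produces $2\mathcal{B}^{c}(s,m)<\mathcal{B}^{c}(s,a)+\mathcal{B}^{c}(s,b)$. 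Iterating this strict midpoint inequality on dyadic subdivisions of $[a,b]$ and invoking the continuity established above extends it to all convex combinations, hence strict convexity along every geodesic segment.

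Finally, for coercivity I keep only the term $\gamma=e$ in the sum and use $\rho(x,z)\geq\rho(x,p)-R$ on $\mathrm{supp}(\mu)$ to obtain $\mathcal{B}^{c}(s,x)\geq e^{-cd_S(s,e)}\cosh[\rho(x,p)-R]$ whenever $\rho(x,p)\geq R$, which tends to $+\infty$ as $x$ leaves every compact set of $X$. I do not foresee any serious obstacle: the only geometric ingredients are the hyperbolic median identity and the CAT$(-1)$ comparison, and everything else is routine bookkeeping on a series controlled uniformly in $\gamma$ by the compact support of $\mu$ together with the assumption $P_{c}(s,x,y)<\infty$.
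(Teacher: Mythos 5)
Your proof is correct. The finiteness and coercivity parts follow essentially the same lines as the paper (the paper packages your $\cosh[\rho(x,\gamma z)]\le e^{R}\cosh[\rho(x,\gamma p)]$ bound as Lemma \ref{equiv-poincare}~(ii) and writes $\mathcal{B}^c(s,x)=\int_X P_c(s,x,z)\,d\mu(z)$; for coercivity it keeps the whole measure and uses $\cosh[\rho(x,z)]\ge\frac12 e^{\rho(x,x_0)}e^{-\rho(x_0,z)}$ rather than isolating the $\gamma=e$ term, but these are cosmetic differences). Where you genuinely diverge is on strict convexity: the paper simply observes that $t\mapsto\rho(c(t),z)$ is convex in nonpositive curvature and that post-composing with the increasing, strictly convex function $\cosh$ makes $t\mapsto\cosh[\rho(c(t),z)]$ strictly convex, then integrates; you instead prove a quantitative strict midpoint inequality $2\cosh[\rho(m,z)]\le\bigl(\cosh[\rho(a,z)]+\cosh[\rho(b,z)]\bigr)/\cosh(\rho(a,b)/2)$ via the hyperbolic median identity and the CAT$(-1)$ comparison, and then upgrade to strict convexity by dyadic subdivision and continuity. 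Your route is longer but more self-contained, and it has the virtue of extracting the explicit gain factor $\cosh(\rho(a,b)/2)$, which is precisely the second-order information that the paper only recovers later, by differentiating under the integral in Lemma \ref{formules} and Corollary \ref{B-convex} (where it becomes the inequality $Dd\mathcal{B}^c\ge\mathcal{B}^c\,g$); the paper's argument, by contrast, needs only CAT$(0)$ convexity of the distance function and is the quicker way to the qualitative statement of this lemma.
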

\begin{proof}
By definition of $\mu_s^c$,
$$
\mathcal{B}^{c}(s,x) = \int_X   \Sigma _{\gamma \in G} e^{-cd_S(s,\gamma)}\cosh\big[\rho(x,\gamma z)\big]d\mu(z)
= \int_X P_c(s,x,z)d\mu (z)\,,
$$
so we get $\mathcal{B}^{c}(s,x)<\infty$
by lemma \ref{equiv-poincare}  $(ii)$ since the support of $\mu$ is compact.
For any geodesic $c(t)$ and $z$ in $X$, $t\to d(c(t),z)$ is a convex function since 
$(X,g)$ has negative sectional curvature, therefore $t\to \cosh\big[\rho(c(t),z)\big]$
is stricly convex and so is $x\to \mathcal{B}^{c}(s,x)=\int_X \cosh\big[\rho(x,z)\big]d\mu_s^c(z)$.
On the other hand we have  
$$
\mathcal{B}^{c}(s,x)=\int_X \cosh\big[\rho(x,z)\big]d\mu_s^c(z)\geq {1\over 2}e^{\rho(x,x_0)}
\int_X e^{-\rho(x_0,z)} d\mu^c_s(z)\,,
$$
so $\mathcal{B}^{c}(s,x)\to +\infty$  whenever $x$ tends to infinity in $X$.
\end{proof}

In the above lemma we proved that  $x \to \mathcal{B}^c(s,x)$ is a convex function which
tends to $+\infty$ when $x$ tend to infinity. We shall now prove 
that $x \to \mathcal{B}^c(s,x)$ is a stricly convex $C^2$ function. We will also give estimates 
of the second derivative of $x\to \mathcal{B}^c(s,x)$.
\begin{lemma}\label{formules}
Let $c$ be such that $P_c(s,x,y) < \infty$. 
The function 
$x\to\mathcal{B}^c(s,x)$ is $C^2$ and for any $s\in \mathcal{G}_S$, $x\in X$ and
any tangent vectors $v, w\in T_x X$ we have
$$
d\mathcal{B}^c(s,x)(v) = \int_X d\rho(x,z)(v) \sinh\big[\rho(x,z)\big]d\mu_s^c(z)
$$ 
and
$$
Dd\mathcal{B}^c(s,x)(v,w)=
$$
$$
\int_X \Big(\sinh\big[\rho(x,z)\big] Dd\rho(x,z)(v,w) +
\cosh\big[\rho(x,z)\big] d\rho(x,z) \otimes d\rho(x,z)(v,w)\Big)d\mu_s^c(z)
$$
\end{lemma}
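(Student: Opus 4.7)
The plan is to differentiate under the integral sign (and, implicitly, under the sum defining $\mu_s^c$), justifying this by dominated convergence through uniform bounds of the derivatives of $z \mapsto \cosh[\rho(x,z)]$ by $\cosh[\rho(x,z)]$ itself, which is already integrable against $\mu_s^c$ by Lemma \ref{convexite}.

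First, I would fix $s\in \mathcal{G}_S$, $x_0 \in X$ and work on a small ball $B(x_0,r)$, writing
$$\mathcal{B}^c(s,x) = \sum_{\gamma \in G} e^{-cd_S(s,\gamma)} \int_X \cosh[\rho(x,\gamma z)]\, d\mu(z).$$
For each $\gamma$, the integrand $z \mapsto \cosh[\rho(x,\gamma z)]$ is smooth in $x$ on $X$ (the squared distance is smooth on a Cartan--Hadamard manifold, and $\rho$ is smooth away from the diagonal, which the support of $\mu$ can be arranged to avoid, or one argues directly using $\cosh \rho$ which is smooth everywhere on $X\times X$). Its first and second derivatives in $x$ are given pointwise by
\begin{align*}
d_x \cosh[\rho(x,z)](v) &= \sinh[\rho(x,z)]\, d\rho(x,z)(v), \\
Dd_x \cosh[\rho(x,z)](v,w) &= \sinh[\rho(x,z)]\, Dd\rho(x,z)(v,w) \\
&\quad + \cosh[\rho(x,z)]\, d\rho(x,z)(v)\,d\rho(x,z)(w),
\end{align*}
by direct calculation using the chain rule and the fact that $(\cosh)' = \sinh$, $(\sinh)' = \cosh$.

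Next I would produce a dominating function. Since $\rho(\cdot,z)$ is $1$-Lipschitz we have $|d\rho(x,z)| \leq 1$. The Hessian comparison theorem in curvature bounded above by $-1$ (used also as a $CAT(-1)$-inequality) yields, for $\rho(x,z) > 0$, the bound $0 \leq Dd\rho(x,z) \leq \coth[\rho(x,z)]\, (g - d\rho \otimes d\rho)$, so in particular $\|Dd\rho(x,z)\|_g \leq \coth[\rho(x,z)]$. Hence
$$\|Dd_x \cosh[\rho(x,z)]\|_g \leq \sinh[\rho]\coth[\rho] + \cosh[\rho] \leq 2\cosh[\rho(x,z)],$$
and the first derivative is controlled by $\sinh[\rho(x,z)] \leq \cosh[\rho(x,z)]$. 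For $x$ ranging in a compact neighborhood $K$ of $x_0$, the triangle inequality gives $\cosh[\rho(x,z)] \leq e^{\mathrm{diam}(K)} \cosh[\rho(x_0,z)]$, so both derivative norms are bounded pointwise in $z$ by a constant multiple of $\cosh[\rho(x_0,z)]$, uniformly in $x \in K$.

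The hypothesis $P_c(s,x_0,y_0) < \infty$ together with Lemma \ref{equiv-poincare}$(ii)$ shows, as in Lemma \ref{convexite}, that $\int_X \cosh[\rho(x_0,z)] d\mu_s^c(z) = \mathcal{B}^c(s,x_0) < \infty$. Dominated convergence therefore legitimates differentiating once, and then twice, under the integral sign defining $\mathcal{B}^c(s,x)$ (and also, if needed, the absolute summation that $\mu_s^c$ conceals), producing the claimed expressions for $d\mathcal{B}^c$ and $Dd\mathcal{B}^c$. Finally, both expressions are continuous in $x$ by a further application of dominated convergence, so $\mathcal{B}^c(s,\cdot)$ is genuinely $C^2$. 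The main technical point is securing the uniform bound on $Dd\rho$ in order to control the second derivative; once Hessian comparison is invoked, the rest is routine dominated convergence.
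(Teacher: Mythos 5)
Your overall strategy is exactly the paper's: compute the pointwise derivatives of $x\mapsto\cosh[\rho(x,z)]$, dominate them locally uniformly by a multiple of $\cosh[\rho(x_0,z)]$, which is $\mu_s^c$-integrable since $P_c<\infty$, and invoke dominated convergence. The first-derivative step is fine. However, the key inequality you use to control the second derivative is quoted in the wrong direction. Hessian comparison under the \emph{upper} curvature bound $K\leq -1$ gives a \emph{lower} bound on the Hessian of the distance function, namely $Dd\rho \geq \coth(\rho)\,(g-d\rho\otimes d\rho)$ -- this is precisely the inequality the paper uses in Corollary \ref{B-convex} to prove strict convexity of $\mathcal{B}^c$. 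The \emph{upper} bound on $Dd\rho$ that you need to dominate the second derivative comes instead from the \emph{lower} curvature bound $K\geq -a^2$ (Rauch), and it reads $Dd\rho \leq a\coth(a\rho)\,(g-d\rho\otimes d\rho)$. Your claimed bound $Dd\rho\leq\coth(\rho)(g-d\rho\otimes d\rho)$ is false whenever the curvature is strictly below $-1$: already in constant curvature $-a^2$ with $a>1$ one has $Dd\rho = a\coth(a\rho)(g-d\rho\otimes d\rho)$, which exceeds $\coth(\rho)(g-d\rho\otimes d\rho)$ for large $\rho$.

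The error is repairable and does not change the architecture of the argument: with the correct comparison, the second derivative of $\cosh[\rho(\cdot,z)]$ is bounded by $a\sinh(\rho)\coth(a\rho)+\cosh(\rho)$, which near $\rho=0$ stays bounded (since $\sinh(\rho)\coth(a\rho)\to 1/a$) and for $\rho$ bounded away from $0$ is at most a constant depending on $a$ times $\cosh(\rho)$; this is essentially the dominating function $H$ constructed in the paper, and dominated convergence then goes through as you intend. But as written, the proof rests on a false inequality, and the role of the pinching hypothesis $-a^2\leq K$ (as opposed to just $K\leq -1$) in making the second derivative integrable is exactly the point your argument misses. You should also note, as the paper does, that the pointwise limit defining the second derivative is only taken for $z$ off the geodesic segment $\alpha([0,t])$, a set of $\mu_s^c$-measure zero, so that the singularity of $d\rho$ and $Dd\rho$ on the diagonal causes no harm.
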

\begin{proof}
Let $v\in T_x X$ be a unit tangent vector at a point $x\in X$. 
For each point $z \neq x$ in $X$, we have
$$
d\,\big(\cosh\big[\rho(x,z)\big]\big)(v)
=d\rho(x,z)(v) \,\sinh\big[\rho(x,z)\big]\,,
$$
hence we get
\begin{equation}\label{derive}
|d\,\big(\cosh\big[\rho(x,z)\big]\big)(v)| =
|d\rho(x,z)(v) \,\sinh\big[\rho(x,z)\big]| 
\leq  \cosh\big[\rho(x,z)\big]\,,
\end{equation}
therefore, $\cosh\big[\rho(x,z)\big] \leq 2\cosh\big[\rho(x_1,z)\big]$ for $x$
in a sufficiently small neighbourhood of an arbitrary point $x_1$.
Since $ z\to 2\cosh\big[\rho(x_1,z)\big]$ is $\mu^c_s$-integrable,
we can differentiate $x \to \mathcal{B}^c(s,x)$ applying Lebesgue derivation theorem.
Let us now compute the second derivative.
Let $v, w \in T_x X$ be unit tangent vectors at $x\in X$. Let $\alpha(t)$ the geodesic
such that $\alpha(0)=x$ and $\alpha'(0) = v$. We denote 
$W(t)$ the parallel vector field along $\alpha$ such that $W(0) = w$ and write
$\rho_{(z,\alpha(t))}$ instead of $\rho(z,\alpha(t))$. 
Let us denote
$$
h(t,z)={1\over t}\Big(d\rho_{(z,\alpha(t))}(W(t))\,\sinh\big[\rho_{(z,\alpha(t))}\big]
-d\rho_{(z,\alpha(0))}(W(0))\,\sinh\big[\rho_{(z,\alpha(0))}\big]\Big).
$$
When $z\neq x$ we have
\begin{equation}\label{limh(t,z)}
\lim_{t\to 0} h(t,z)=\sinh\big[\rho_{(x,z)}\big] Dd\rho_{(x,z)}(v,w) +
\cosh\big[\rho_{(x,z)}\big] d\rho_{(x,z)} \otimes d\rho_{(x,z)}(v,w)\,.
\end{equation}
We will write 
\begin{equation}\label{h_0}
h_0(z)=:\lim_{t\to 0} h(t,z).
\end{equation}
The formula which gives $Dd\mathcal{B}^c(s,x)(v,w)$ in lemma \ref{formules} is equivalent
to 
\begin{equation}\label{DdB}
Dd\mathcal{B}^c(s,x)(v,w)=\int_X h_0(z)d\mu_s^c(z)
\end{equation}
and will be a consequence
of the dominated convergence theorem of Lebesgue with the existence of a 
$\mu_s^c(z)$- integrable function $H(z)$ such that for 
any $z\notin \alpha([0,t])$ then $h(t,z) \leq H(z)$.
Let us now prove the existence of such a function $H$.
For each $z \notin \alpha ([0,t])$ we have
\begin{eqnarray*}
|h(t,z)| 
& \leq & \sup_{t'\in [0,t]}
\Big|\sinh\big[\rho_{(z,\alpha(t'))}\big] Dd\rho_{(z,\alpha(t'))}(\dot{\alpha}(t'),W(t')) +\dots \\
& & \dots 
+ \cosh\big[\rho_{(z,\alpha(t'))}\big] d\rho_{(z,\alpha(t'))} 
\otimes d\rho_{(z,\alpha(t'))}(\dot{\alpha}(t'),W(t'))\Big|\,.
\end{eqnarray*}
Since the curvature of $(X,g)$ satisfies  $-a^2 \leq K \leq -1$, the Rauch comparison theorem shows that 
for each $x,y \in X$
$$
Dd\rho_{(x,y)} \leq a \frac{\cosh[a\rho_{(x,y)}]}{\sinh[a\rho_{(x,y)}]} 
\Big(g-d\rho_{(x,y)} \otimes d\rho_{(x,y)}\Big)\,,
$$
hence we get from the previous inequality
\begin{eqnarray*}
|h(t,z)| 
&\leq &
\Big[a \sinh\big[\rho_{(z,\alpha(t'))}\big]
\frac{\cosh[a\rho_{(z,\alpha(t'))}]}{\sinh[a\rho_{(z,\alpha(t'))}]} 
\Big(g-d\rho_{(z,\alpha(t'))} \otimes d\rho_{(z,\alpha(t'))}\Big)+\dots\\
& & \dots 
+\cosh\big[\rho_{(z,\alpha(t'))}\big]
 d\rho_{(z,\alpha(t'))} \otimes d\rho_{(z,\alpha(t'))}\Big](\dot{\alpha}(t'),W(t')) \,.
\end{eqnarray*}
But since $a \geq 1$ the concavity of the function $\tanh$ on $\mathbb R _+$ gives 
$$
\frac{a}{\tanh a\rho} \geq {1\over \tanh \rho}
$$
therefore we get
\begin{eqnarray}\label{estimation}
|h(t,z)| 
&\leq &
a \sinh\big[\rho_{(z,\alpha(t'))}\big]
\frac{\cosh[a\rho_{(z,\alpha(t'))}]}{\sinh[a\rho_{(z,\alpha(t'))}]}\,.
\end{eqnarray}
Since ${\sinh \rho \leq {1\over a} \sinh a\rho}$ by convexity of $\sinh$, we then get that 
$|h(t,z)| \leq H(z)$ from \ref{estimation}
for all $|t| \leq {1\over a}$ and all $z\notin \alpha([0,t])$  
where
$$
H(z) = \left\{
\begin{array}{ll}
a{\cosh1 \over \sinh1} \sinh [\rho_{(z,\alpha(0))}+1], \quad \rho_{(z,\alpha(0))}\geq {2\over a} \\
\cosh [a \rho_{(z,\alpha(0))}+1],\quad \rho_{(z,\alpha(0))} < {2\over a}
\end{array}
\right.
$$
This concludes the proof of lemma \ref{formules}.
\end{proof}
The above lemma \ref{formules} has the following 
\begin{corollary}\label{B-convex}
Under the assumptions of lemma \ref{formules} we have 
$$
Dd\mathcal{B}^c \geq \mathcal{B}^c\,.g\,,
$$
in particular, $\mathcal{B}^c$ is strictly convex.
\end{corollary}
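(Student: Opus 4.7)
The plan is to integrate a pointwise Hessian comparison inequality for $z \mapsto \cosh[\rho(x,z)]$ against the measure $\mu_s^c$, using the exact formula for $Dd\mathcal{B}^c$ given by lemma \ref{formules}.

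First I would fix $s \in \mathcal{G}_S$, $x \in X$, and a tangent vector $v \in T_x X$, and look at the integrand in the expression for $Dd\mathcal{B}^c(s,x)(v,v)$ provided by lemma \ref{formules}. Since the sectional curvature of $(X,g)$ satisfies $K \leq -1$, the Rauch comparison theorem gives, for each fixed $z \neq x$,
$$Dd\rho_{(x,z)} \geq \frac{\cosh[\rho(x,z)]}{\sinh[\rho(x,z)]}\bigl(g - d\rho(x,z) \otimes d\rho(x,z)\bigr).$$
Multiplying this inequality by $\sinh[\rho(x,z)] > 0$ and adding the term $\cosh[\rho(x,z)]\, d\rho(x,z) \otimes d\rho(x,z)$ on both sides, the off-diagonal contributions cancel and we obtain the clean pointwise bound
$$\sinh[\rho(x,z)]\, Dd\rho(x,z)(v,v) + \cosh[\rho(x,z)]\, d\rho(x,z)(v)^2 \geq \cosh[\rho(x,z)]\, g(v,v).$$

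Next I would integrate this inequality against $d\mu_s^c(z)$. By the formula for $Dd\mathcal{B}^c$ in lemma \ref{formules}, the left-hand side integrates to $Dd\mathcal{B}^c(s,x)(v,v)$, while the right-hand side integrates to $g(v,v)\int_X \cosh[\rho(x,z)] d\mu_s^c(z) = g(v,v) \,\mathcal{B}^c(s,x)$. This yields the claimed inequality $Dd\mathcal{B}^c \geq \mathcal{B}^c \cdot g$. Strict convexity then follows immediately since $\mathcal{B}^c(s,x) > 0$ (the measure $\mu_s^c$ is a non-zero positive measure because $\mu$ is a non-trivial probability measure), so $Dd\mathcal{B}^c(s,x)(v,v) > 0$ for every non-zero $v$.

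There is no real obstacle here; the only minor point to watch is that the measure $\mu_s^c$ might charge the point $x$, but this is a measure-zero concern since $\mu$ has a smooth density, and the Rauch inequality holds $\mu_s^c$-almost everywhere, which is all we need for the integral comparison.
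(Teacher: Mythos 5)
Your proof is correct and follows essentially the same route as the paper: apply the Rauch/Hessian comparison $Dd\rho \geq \frac{1}{\tanh\rho}(g - d\rho\otimes d\rho)$ pointwise, observe that combining it with the integrand from lemma \ref{formules} makes the $d\rho\otimes d\rho$ terms cancel and leaves $\cosh[\rho]\,g$, then integrate against $\mu_s^c$. The paper's version is just a terser rendition of the same computation, so nothing further is needed.
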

\begin{proof}
Since the sectional curvature of $(X,g)$ satisfies $K \leq -1$ Rauch's theorem shows
that 
$$
Dd\rho \geq {1\over \tanh \rho}\Big(g- d\rho \otimes d\rho\Big)\,.
$$
From this inequality and lemma \ref{formules} we therefore get, for all $x\in X$
and any unit tangent vector $v\in T_x X$,
$$
Dd\mathcal{B}^c (v,v) \geq \Big(\int_X \cosh [\rho_{(z,x)}] d \mu_s^c(z) \Big)g(v,v) 
= \mathcal{B}^c(x)\,. g(v,v)\,.
$$
\end{proof}
\subsection{Construction of Lipschitzian maps $f_c : \mathcal{G}_S \to X$.}
So far we have shown that for any $s\in \mathcal{G}_S$ the function
$x \to \mathcal{B}^c(s,x)$ is strictly convex and tends to $+\infty$ when
$x$ tend to infinity. 
We then can define the map $f_c :\mathcal{G}_S \to X$ as follows.
For $s\in \mathcal{G}_S$ we define $f_c(s)$ as the unique point $x\in X$ 
which achieves the strict minimum of the function
$x \to \mathcal{B}^c(s,x)$. 
The end of this section is devoted to proving the following
\begin{proposition}\label{lipschitz}
Let $c$ be such that $P_c(s,x,y) < \infty$.
Let $f_c :(\mathcal{G}_S,d_S) \to (X,g)$ which associates to $s\in \mathcal{G}_S$
the unique point $x\in X$ 
which achieves the unique minimum of the function
$x \to \mathcal{B}^c(s,x)$. 
Then, $f_c$ is Lipschitzian of Lipschitz constant equal to $c$.
\end{proposition}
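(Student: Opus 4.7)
The plan is to show that $f_c$ is $c$-Lipschitz along each edge of $\mathcal{G}_S$; since $d_S$ is a geodesic metric, this immediately yields $\rho(f_c(s),f_c(s'))\leq c\,d_S(s,s')$ globally. Fix an edge joining a vertex $\gamma_1$ to the adjacent vertex $\gamma_2=\gamma_1\sigma$, parametrized by arc length $t\in[0,1]$. For each fixed $\gamma\in G$, the function $t\mapsto d_S(s(t),\gamma)$ is piecewise affine with slope $\pm 1$, so $t\mapsto\mu_{s(t)}^c$ and $t\mapsto\mathcal{B}^c(s(t),x)$ are piecewise $C^\infty$ in $t$. Writing $x(t)=f_c(s(t))$, the minimizer equation $d_x\mathcal{B}^c(s(t),x(t))=0$ combined with the strict convexity estimate $Dd\mathcal{B}^c\geq\mathcal{B}^c\,g$ of Corollary \ref{B-convex} lets me invoke the implicit function theorem, so that $x(t)$ is $C^1$ on each smooth piece and continuous across the finitely many break points.

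The key computation is an estimate of the $t$-derivative of $d_x\mathcal{B}^c$. Differentiating the defining series for $\mu_{s(t)}^c$ yields
\[
\partial_t\mu_{s(t)}^c \;=\; -c\sum_{\gamma\in G}\varepsilon_\gamma(t)\,e^{-c\,d_S(s(t),\gamma)}\,\gamma_\ast\mu,\qquad \varepsilon_\gamma(t)\in\{-1,+1\}.
\]
Inserting this into the first-order formula of Lemma \ref{formules} and using $|d\rho_{(x,z)}(v)|\leq|v|$ (since $x\mapsto\rho(x,z)$ is $1$-Lipschitz) together with $\sinh\rho\leq\cosh\rho$ produces the pointwise bound
\[
\bigl|\partial_t\bigl[d_x\mathcal{B}^c(s(t),x)(v)\bigr]\bigr|\;\leq\; c\int_X |v|\,\cosh[\rho(x,z)]\,d\mu_{s(t)}^c(z)\;=\;c\,\mathcal{B}^c(s(t),x)\,|v|.
\]

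To conclude, I differentiate the identity $d_x\mathcal{B}^c(s(t),x(t))=0$ in $t$ and evaluate the resulting relation on the tangent vector $v=\dot x(t)\in T_{x(t)}X$:
\[
Dd\mathcal{B}^c(s(t),x(t))\bigl(\dot x(t),\dot x(t)\bigr)\;=\;-\,\partial_t\bigl[d_x\mathcal{B}^c(s(t),x)(\dot x(t))\bigr]\Big|_{x=x(t)}.
\]
The left-hand side is at least $\mathcal{B}^c(s(t),x(t))\,|\dot x(t)|^2$ by Corollary \ref{B-convex}, while the right-hand side is at most $c\,\mathcal{B}^c(s(t),x(t))\,|\dot x(t)|$ by the previous estimate; dividing by the positive quantity $\mathcal{B}^c$ gives $|\dot x(t)|\leq c$. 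Integrating along an edge shows $f_c$ is $c$-Lipschitz across that edge, and integrating along an arbitrary $d_S$-geodesic then proves the proposition. The main obstacle I expect is the \emph{rigorous justification of the term-by-term differentiation} of the Poincar\'e series in $t$: one must produce a locally uniform integrable majorant for the differentiated sum, which can be done exactly in the spirit of Lemma \ref{formules}, combining the finiteness of $P_c$, the compactness of the support of $\mu$, and Lemma \ref{equiv-poincare}(ii). Once this domination argument is in place, the remaining proof reduces to the clean convexity-plus-Lipschitz computation sketched above.
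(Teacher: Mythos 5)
Your proposal is correct and follows essentially the same route as the paper: differentiate the Poincar\'e series in the edge parameter away from vertices and midpoints (where $d_S(\cdot,\gamma)$ is affine), bound the resulting derivative of $d_x\mathcal{B}^c$ by $c\,\mathcal{B}^c$ using $|d\rho|\leq 1$ and $\sinh\leq\cosh$, combine with the lower bound $Dd\mathcal{B}^c\geq \mathcal{B}^c g$ via the implicit function theorem to get $\|df_c\|\leq c$, and glue across edges. The one point you assert rather than prove is the continuity of $f_c$ at the midpoints and vertices (it does not follow from the implicit function theorem, which only applies away from those points); the paper handles this with a separate dominated-convergence argument showing that the limit of the minimizers $f_c(s_k)$ is the minimizer at the limit point $s$.
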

The proof of the proposition \ref{lipschitz} relies on 
the following technical lemmas.
\begin{lemma}\label{s-derivabilite}
Let $c$ be such that $P_c(s,x,y) < \infty$.
For all $x \in X$ and all tangent vector 
$v\in T_x X$ the function $\alpha :s \to d\mathcal{B}^c(s,x)(v)$ is differentiable at each point
$s\in \mathcal{G}_S$ distinct from a vertex or a middle point of an edge.
Moreover, for such an $s$ we have 
\begin{eqnarray*}
\alpha'(s)&=&
-c \int_X d\rho_{(x,z)}(v) \sinh \big[\rho(x,z)\big]
\Sigma _{\gamma\in G}
{d\over ds} \Big(d_S(s,\gamma)\Big)
e^{-c d_S(s,\gamma)} d(\gamma _{\ast} \mu)(z)\\
\end{eqnarray*}
\end{lemma}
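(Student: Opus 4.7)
The plan is to represent $\alpha$ as an explicit series and then justify term-by-term differentiation in $s$ by uniform convergence. By the formula from Lemma \ref{formules} applied to the definition (\ref{patterson}) of $\mu_s^c$, one expands
$$\alpha(s) = d\mathcal{B}^c(s,x)(v) = \sum_{\gamma \in G} e^{-cd_S(s,\gamma)} \int_X d\rho(x,z)(v)\,\sinh[\rho(x,z)]\,d(\gamma_*\mu)(z).$$
The stated formula for $\alpha'(s)$ is obtained from this expression by formally differentiating the exponential factor and applying Fubini, so the entire content is in justifying this differentiation.

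The second step analyses the $s$-dependence term by term. If $s_0$ is neither a vertex nor a midpoint, then $s_0$ lies in the open interior of a single edge $e$, strictly on one side of its midpoint, and there is a small interval $U = (s_0-\varepsilon, s_0+\varepsilon) \subset e$ with the same property. Using the parameterization of $e$ by arc length in $[0,1]$, for each $\gamma \in G$ the function $s \mapsto d_S(s,\gamma)$ is computed by taking the minimum over the two endpoint paths; since $s$ stays in $U$ on one side of the midpoint, this minimum is realised by the same endpoint for all $s \in U$, and thus $s \mapsto d_S(s,\gamma)$ is affine of slope $\pm 1$ throughout $U$, with a value of the slope that is independent of $s \in U$. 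Each term of the series is therefore $C^1$ on $U$ with derivative
$$-c\,\tfrac{d}{ds}(d_S(s,\gamma))\,e^{-cd_S(s,\gamma)}\int_X d\rho(x,z)(v)\,\sinh[\rho(x,z)]\,d(\gamma_*\mu)(z).$$

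The third step bounds the differentiated series uniformly on $U$. The affine character gives $e^{-cd_S(s,\gamma)} \leq e^{c\varepsilon}\,e^{-cd_S(s_0,\gamma)}$ for all $s \in U$; combined with $\bigl|\tfrac{d}{ds}d_S(s,\gamma)\bigr| = 1$ and the pointwise inequality $|d\rho(x,z)(v)\,\sinh[\rho(x,z)]| \leq |v|\,\cosh[\rho(x,z)]$, the $\gamma$-th term of the derivative series is uniformly bounded on $U$ by
$$c\,|v|\,e^{c\varepsilon}\,e^{-cd_S(s_0,\gamma)} \int_X \cosh[\rho(x,z)]\,d(\gamma_*\mu)(z).$$
Summing over $\gamma$ yields the finite quantity $c\,|v|\,e^{c\varepsilon}\,\mathcal{B}^c(s_0,x)$ by Lemma \ref{convexite} and the hypothesis $P_c(s,x,y) < \infty$. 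Hence the differentiated series converges normally on $U$, and the classical theorem on differentiation of a uniformly convergent series of $C^1$ functions gives the differentiability of $\alpha$ on $U$ together with the announced formula.

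The only real obstacle is making sure one has a whole neighborhood of $s$ on which the slope $\frac{d}{ds}d_S(s,\gamma)$ is well defined and constant for every $\gamma$ simultaneously; excluding vertices (where edges meet and the distance function has corners) and midpoints (where two paths through opposite endpoints have equal length and the min switches) is exactly what makes this possible, after which the estimate reduces to the already known finiteness of $\mathcal{B}^c(s_0,x)$.
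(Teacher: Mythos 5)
Your proof is correct and follows essentially the same route as the paper: both rest on the observation that $d_S(\cdot,\gamma)=\min\big[d_S(g,\gamma)+t,\,d_S(g',\gamma)+1-t\big]$ is affine of slope $\pm 1$ near any point that is neither a vertex nor a midpoint, together with the domination of the differentiated terms by $c\,e^{-cd_S(s_0,\gamma)}\cosh[\rho(x,\gamma z)]$, which is summable by the finiteness of the Poincar\'e series. The only (immaterial) difference is packaging: the paper applies Lebesgue's dominated convergence theorem directly to the difference quotients, whereas you invoke normal convergence of the term-by-term differentiated series.
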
 
\begin{proof}
Let us denote by $[g,g']$ the edge containing $s$ and parametrize it by $t\in [0,1]$.
We first observe that for all $\gamma \in G$ then 
$$
d_S(s,\gamma)= \min \big[ d_S(g,\gamma)+t, d_S(g',\gamma) +1-t\big]\,,
$$
therefore $s\to d_S(s,\gamma)$ is differentiable at each $s\in ]g,g'[$ distinct of the middle point of $]g,g'[$.
On the other hand we have by lemma \ref{formules}
$$
d\mathcal{B}^c(s,x)(v)= \int_X d\rho(x,z)(v) \sinh\big[\rho(x,z)\big]d\mu_s^c(z)\,,
$$
so that we can write
\begin{eqnarray*}
&{1\over t} \Big(\alpha(s+t)-\alpha(s)\Big)=&\\
& \Sigma _{\gamma\in G} \int_X d\rho_{(x,\gamma z)}(v) \sinh \big[\rho(x,\gamma z)\big]
\,. {1\over t} \Big[
e^{-c d_S(s+t,\gamma)} - e^{-c d_S(s,\gamma)}\Big] d\mu(z)&\,,
\end{eqnarray*}
where we have identified the point $s$ in the edge $[g,g']$ with its parameter.
Let us observe that for $|t|$ small enough,
$$
\Big|{1\over t}\Big[
e^{-c d_S(s+t,\gamma)} - e^{-c d_S(s,\gamma)}\Big] \Big|\leq 2c \,e^{-c d_S(s,\gamma)}\,,
$$
and that
$$
2c \,\Sigma _{\gamma\in G} \int_X \big|d\rho_{(x,\gamma z)}(v)\big| \sinh \big[\rho(x,\gamma z)\big]
\,.
e^{-c d_S(s,\gamma)} d\mu(z) < \infty \,,
$$
hence if $s\in \mathcal{G}_S$ is distinct from a vertex or a middle point of an edge we get
\begin{eqnarray*}
&\lim _{t\to 0}{1\over t} \Big(\alpha(s+t)-\alpha(s)\Big)=&\\
&-c \int_X d\rho_{(x,z)}(v) \sinh \big[\rho(x,z)\big]
\Sigma _{\gamma\in G}
{d\over ds} \Big(d_S(s,\gamma)\Big)
e^{-c d_S(s,\gamma)} d(\gamma _{\ast} \mu)(z)&
\end{eqnarray*}
by Lebesgue's theorem.
\end{proof}
\begin{lemma}\label{derivative-estimate}
Let $c$ be such that $P_c(s,x,y) < \infty$.
Let $s_0\in \mathcal{G}_S$ be a point distinct from a vertex or a middle point of an edge,
and $u$ a unit vector tangent at $s_0$ to the edge containing $s_0$. 
Then, we have $||df_c(u)|| \leq c$.
\end{lemma}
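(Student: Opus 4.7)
The plan is to compute $df_c(u)$ by implicit differentiation of the first-order condition that defines $f_c(s)$, and then to estimate the two sides using Lemma \ref{s-derivabilite} and Corollary \ref{B-convex}.

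By construction, $f_c(s)$ is the unique minimum of $x\mapsto \mathcal{B}^c(s,x)$, so it is characterized by the equation
$$d\mathcal{B}^c\bigl(s,f_c(s)\bigr)(v)=0\qquad\text{for every } v\in T_{f_c(s)}X.$$
Take a smooth curve $s(t)$ in $\mathcal{G}_S$ with $s(0)=s_0$ and $\dot s(0)=u$, and parallel transport a chosen vector $v\in T_{f_c(s_0)}X$ along $t\mapsto f_c(s(t))$. Differentiating the vanishing identity at $t=0$ and using the chain rule, I would obtain
$$Dd\mathcal{B}^c\bigl(s_0,f_c(s_0)\bigr)\bigl(df_c(u),v\bigr)=-\alpha'(s_0),$$
where $\alpha(s)=d\mathcal{B}^c(s,f_c(s_0))(v)$ with $x=f_c(s_0)$ and $v$ held fixed. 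This is valid precisely because $s_0$ is neither a vertex nor a midpoint of an edge, so Lemma \ref{s-derivabilite} applies.

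Now I would specialize to $v=df_c(u)$ and estimate both sides. For the left-hand side, Corollary \ref{B-convex} gives the lower bound
$$Dd\mathcal{B}^c\bigl(s_0,f_c(s_0)\bigr)\bigl(df_c(u),df_c(u)\bigr)\geq \mathcal{B}^c\bigl(s_0,f_c(s_0)\bigr)\,\|df_c(u)\|^2.$$
For the right-hand side, the explicit formula from Lemma \ref{s-derivabilite} with $v=df_c(u)$ and $x=f_c(s_0)$ reads
$$\alpha'(s_0)=-c\int_X d\rho_{(x,z)}(v)\sinh[\rho(x,z)]\,\sum_{\gamma\in G}\tfrac{d}{ds}\bigl(d_S(s_0,\gamma)\bigr)e^{-cd_S(s_0,\gamma)}\,d(\gamma_{\ast}\mu)(z).$$
Since each edge is isometric to $[0,1]$, we have $|\tfrac{d}{ds}d_S(s,\gamma)|\leq 1$, and clearly $|d\rho_{(x,z)}(v)|\leq \|v\|=\|df_c(u)\|$, while $\sinh\leq\cosh$. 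Combining these elementary inequalities with the definition of $\mu_s^c$ gives
$$|\alpha'(s_0)|\leq c\,\|df_c(u)\|\int_X\cosh[\rho(x,z)]\,d\mu_{s_0}^c(z)=c\,\|df_c(u)\|\,\mathcal{B}^c\bigl(s_0,f_c(s_0)\bigr).$$

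Putting the two bounds together yields
$$\mathcal{B}^c\bigl(s_0,f_c(s_0)\bigr)\,\|df_c(u)\|^2\leq c\,\|df_c(u)\|\,\mathcal{B}^c\bigl(s_0,f_c(s_0)\bigr),$$
and since $\mathcal{B}^c(s_0,f_c(s_0))>0$, dividing gives $\|df_c(u)\|\leq c$, as claimed. The only potential subtlety is the justification of the implicit differentiation of $f_c$ itself: a priori one only knows that $f_c$ is well-defined, not that it is differentiable. This is where I expect the main (minor) obstacle to lie; however, strict convexity of $\mathcal{B}^c$ in $x$ (Corollary \ref{B-convex}, with positive definite Hessian bounded below by $\mathcal{B}^c\,g$) together with the $C^1$ dependence of $d\mathcal{B}^c(s,x)$ on $s$ away from vertices and midpoints (Lemma \ref{s-derivabilite}) makes the implicit function theorem applicable, so $f_c$ is differentiable at such $s_0$ and the computation above is rigorous.
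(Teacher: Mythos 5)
Your proposal is correct and follows essentially the same route as the paper: implicit differentiation of the first-order condition $d\mathcal{B}^c(s,f_c(s))=0$ (justified via the implicit function theorem, using the invertibility of the Hessian from Corollary \ref{B-convex} and the $s$-differentiability from Lemma \ref{s-derivabilite}), then the lower bound $Dd\mathcal{B}^c\geq \mathcal{B}^c g$ on the left and the bounds $|\tfrac{d}{ds}d_S|\leq 1$, $|d\rho(v)|\leq\|v\|$, $\sinh\leq\cosh$ on the right. The only cosmetic difference is that the paper tests against the normalized vector $v=df_c(u)/\|df_c(u)\|$ and keeps the ratio $\int\sinh/\int\cosh\leq 1$ explicit, whereas you test against $df_c(u)$ and divide; the content is identical.
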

\begin{proof}
Let us fix a smooth moving frame $\{E_1,\dots, E_n\}$ of $TX$ and define the 
function
$\Phi  : X \times \mathcal{G}_S \to \mathbb{R}^n$
by 
$$
\Phi(x,s) = \big( d\mathcal{B}^c(s,x)(E_1), \dots ,  d\mathcal{B}^c(s,x)(E_n)\big)\,.
$$
By definition, the point $f_c(s)$ is characterized by the implicit equation
$$
\Phi(f_c(s),s)=0\,,
$$
or equivalently,
$$
d\mathcal{B}^c(s,f_c(s))=0\,.
$$
For all $x\in X$ and $s\in \mathcal{G}_S$ in a neighbourhood of $s_0$ the function $\Phi$
is differentiable by lemma \ref{formules} and \ref{s-derivabilite}. Moreover
since $x=f_c(s)$ is a critical point of the function $x\to \mathcal{B}^c(s,x)$, we have, for $j=1,\dots,n$,
$$
{\partial \Phi \over \partial x}(f_c(s),s) (E_j)
= \big( Dd\mathcal{B}^c(s,f_c(s))(E_j,E_1), \dots ,  Dd\mathcal{B}^c(s,f_c(s))(E_j,E_n)\big)\,,
$$
thus $ {\partial \Phi \over \partial x}(f_c(s),s)$ is invertible by corollary \ref{B-convex}. 
By the implicit function theorem, the function $f_c$ is then differentiable at $s$ in
a neighbourhood of $s_0$ and we have, if $u$ is a unit vector tangent at $s_0$ to the edge containing $s_0$
and $v$ a tangent vector in $T_{f_c(s)} X$,
\begin{equation}\label{derivee}
Dd\mathcal{B}^c({s_0},f_c(s_{0}))(df_c(u),v))=
-{d\over ds}_{/s=s_0}d\mathcal{B}^c(s,f_c(s_0))(v)\,.
\end{equation}
From corollary \ref{B-convex} and lemma \ref{s-derivabilite} we obtain, setting $v=\frac{df_c(u)}{||df_c(u)||}$
\begin{eqnarray*}
&g(df_c(u),v) \,\mathcal{B}^c({s_0},f_c(s_{0})) \leq &\\
&c \int_X \big|d\rho_{(f_c(s_0),z)}(v)\big| \sinh \big[\rho(f_c(s_0),z)\big]
\Sigma _{\gamma\in G}
\big|{d\over ds}_{/s=s_0} \Big(d_S(s,\gamma)\Big)\big|
e^{-c d_S(s_0,\gamma)} d(\gamma _{\ast} \mu)(z)&
\end{eqnarray*}
therefore
\begin{equation}\label{major}
\big|g(df_c(u),v) \,\mathcal{B}^c\big({s_0},f_c(s_0)\big)\big| \leq 
c \int_X  \sinh \big[\rho(f_c(s_0),z)\big]d \mu^c_{s_0}(z)\,.
\end{equation}
hence
$$
||df_c(u)|| \leq c\, \frac{\int_X  \sinh \big[\rho(f_c(s_0),z)\big]d \mu^c_{s_0}(z) }
{\int_X  \cosh \big[\rho(f_c(s_0),z)\big]d \mu^c_{s_0}(z) }
\leq c \,,
$$
which completes the proof of lemma \ref{derivative-estimate}.
\end{proof} 
The proposition \ref{lipschitz} follows then from the 
\begin{corollary}\label{the-end}
Let $c$ be such that $P_c(s,x,y) < \infty$.
The map $f_c$ is Lipschitzian of Lipschitz constant equal to $c$.
\end{corollary}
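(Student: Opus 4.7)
The plan is to promote the infinitesimal estimate $\|df_c(u)\|\leq c$ of Lemma \ref{derivative-estimate} to a global Lipschitz bound. Given two points $s,s'\in\mathcal{G}_S$, one picks a geodesic in the Cayley graph from $s$ to $s'$; it has total length $d_S(s,s')$ and decomposes as a finite concatenation of sub-arcs each contained in a single closed edge. Since the triangle inequality in $(X,g)$ is additive along such a concatenation, it suffices to establish $\rho(f_c(s),f_c(s'))\leq c\,d_S(s,s')$ when $s,s'$ lie on a single closed edge $[g,g']$, parametrized isometrically by $t\in[0,1]$.

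On a single edge, two ingredients are needed: (a) by Lemma \ref{derivative-estimate}, the restriction $t\mapsto f_c(t)$ is differentiable on $(0,1)\setminus\{1/2\}$ with $\|\dot f_c(t)\|\leq c$; and (b) $t\mapsto f_c(t)$ is continuous on the closed edge $[0,1]$. For (b), recall that for each $t$ the function $x\mapsto\mathcal{B}^c(t,x)$ is strictly convex, $C^2$, and tends to $+\infty$ at infinity (Lemma \ref{convexite} and Corollary \ref{B-convex}); moreover, the measures $\mu_t^c$ defined by (\ref{patterson}) depend continuously on $t$, thanks to the absolute convergence of the defining series on compact sets. Hence $(t,x)\mapsto \mathcal{B}^c(t,x)$ is jointly continuous, and together with the uniform coercivity provided by Corollary \ref{B-convex} this forces its unique minimizer in $x$ to depend continuously on $t$.

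Combining (a) and (b), a standard one-variable argument yields the Lipschitz estimate. Fix $0<t<t'<1$ with $1/2\notin[t,t']$; then the curve $r\mapsto f_c(r)$ is absolutely continuous on $[t,t']$ with length bounded by $\int_t^{t'}\|\dot f_c(r)\|\,dr\leq c(t'-t)$, so $\rho(f_c(t),f_c(t'))\leq c(t'-t)$. Continuity of $f_c$ at the three exceptional points $0,\,1/2,\,1$ extends the inequality to all $t,t'\in[0,1]$, giving $\rho(f_c(s),f_c(s'))\leq c\,d_S(s,s')$, and the reduction of the first paragraph then yields the corollary.

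The main obstacle I anticipate is rigorously justifying the continuity of $f_c$ at the exceptional points (the vertices and the edge midpoints, where the formula of Lemma \ref{s-derivabilite} breaks down): one must argue that small perturbations of $s$ produce small perturbations of the minimizer even when the one-sided derivative of $s\mapsto d_S(s,\gamma)$ has a jump. Once this continuity is settled via the joint continuity of $\mathcal{B}^c$ and the uniform convex coercivity from Corollary \ref{B-convex}, the corollary follows routinely from the derivative bound of Lemma \ref{derivative-estimate}.
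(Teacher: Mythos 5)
Your proposal is correct and follows essentially the same route as the paper: the derivative bound of Lemma \ref{derivative-estimate} gives the Lipschitz estimate on each open half-edge, the estimate is extended across vertices and midpoints by proving continuity of $f_c$ there, and the decomposition of any Cayley-graph segment into such pieces concludes. The only minor difference is in the continuity step, where the paper first notes that $f_c(s_k)$ is Cauchy by the already-established half-edge Lipschitz bound and then identifies the limit as the minimizer via dominated convergence of $\mathcal{B}^c$, whereas you obtain convergence of the minimizers from uniform coercivity plus joint continuity of $\mathcal{B}^c$; both are valid.
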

\begin{proof}
Let us consider a segment $[s_1, s_2 ] \subset \mathcal{G}_S$ which 
contains no edges nor middle points. It directly follows from
lemma \ref{derivative-estimate} that 
\begin{equation}\label{c-lipschitz}
\rho\big(f_c(s_1),f_c(s_2)\big)\leq c\, d_S(s_1, s_2)\,.
\end{equation}
We now want to extend the inequality (\ref{c-lipschitz}) for all points
$s_1, s_2 \in \mathcal{G}_S$. 
For that purpose we first consider
a segment $[s_1, s_2 ] \subset \mathcal{G}_S$ where $s_1$ is a midpoint of an edge $e$
and $s_2$ a vertex of the same edge $e$ and the inequality (\ref{c-lipschitz}) for these
points $s_1,s_2$ derives from the continuity of $f_c$ at $x_1$ and $x_2$.
The corollary \ref{the-end} will then follow from the fact that any segment $[s_1, s_2 ] \subset \mathcal{G}_S$
can be decomposed in a finite sequence of adjacent intervals $[y_1^k, y_2^k]$ where
$y_1^k$ is a midpoint and $y_2^k$ a vertex of one same edge or the other way around.
Let us prove the continuity of $f_c$ at a vertex or a midpoint $s$ of an edge.
Given such a point $s$, let $\{s_k\}_{k\in\mathbb N}$ be a sequence converging 
to $s$ and staying in a single mid-edge containing $s$. The sequence $x_k=:f_c(s_k)$ is a Cauchy sequence in $X$
by (\ref{c-lipschitz}) whose limit is a point $x=\lim _k x_k$. 
We want to prove that $f_c(s)=x$.
For all $z\in X$ and $k\in \mathbb N$ we have 
\begin{equation}\label{inegalite}
\mathcal{B}^c(s_k,z) \geq \mathcal{B}^c(s_k,x_k)
\end{equation}
by  definition of $x_k=f_c(s_k)$.
We claim that $\lim _k \mathcal{B}^c(s_k,x_k) =\mathcal{B}^c(s,x)$
and that $\lim _k \mathcal{B}^c(s_k,z) =\mathcal{B}^c(s,z)$ . Assuming the claim and passing 
to the limit in (\ref{inegalite}) when $k$ tends to infinity gives for all $z\in X$,
\begin{equation}\label{inegalite*}
\mathcal{B}^c(s,z) \geq \mathcal{B}^c(s,x)\,,
\end{equation}
therefore $x=f_c(s)$.
Let us prove the claim. By definition (\ref{B}) and (\ref{patterson}), we have 
\begin{eqnarray*}
\mathcal{B}^{c}(s_k,x_k) &=& \int_X \cosh\big[\rho(x_k,z)\big]d\mu_{s_k}^c(z)\\
&=& \int_X   \Sigma _{\gamma \in G} e^{-cd_S(s_k,\gamma)}\cosh\big[\rho(x_k,\gamma z)\big]d\mu(z)\,,
\end{eqnarray*}
Since 
$e^{-cd_S(s_k,\gamma)}\cosh\big[\rho(x_k,\gamma z)\big]
\leq e^{c}\,e^{-cd_S(s,\gamma)}\cosh\big[\rho(x,\gamma z)+1\big]$
for $k$ large enough, we get $\lim _k \mathcal{B}^c(s_k,x_k) =\mathcal{B}^c(s,x)$ by the 
Lebesgue's theorem. By the way we get $\lim _k \mathcal{B}^c(s_k,z) =\mathcal{B}^c(s,z)$ 
which concludes the proof of the claim, the corollary \ref{the-end} and the proposition \ref{lipschitz}.
\end{proof}

\section{Algebraic Entropy and $\eta$-straight isometries.}

Let $G$ be a finitely generated discrete group of isometries of $(X,g)$ whose sectional curvature satisfies 
$-a^2 \leq K_g\leq -1$, and $S=\{\sigma_1,\dots, \sigma_p\}$ be a finite generating set.

We assume that the minimal displacement 
$L(S)= \inf_{x\in X}\max_{i=1,\dots p} \rho(x,\sigma_i x)$ of $S$ 
(cf. definition \ref{0})
satisfies $L(S)>0$. 
By lemma \ref{x0} there exist a point $x_0 \in X$ such that 
$$L(S)=\inf_{x\in X}\max_{i\in\{ 1,\dots,p\}}\rho(x,\sigma_ix)=\max_{i\in\{ 1,\dots,p\}}\rho(x_0,\sigma_ix_0)\,.$$ 
The goal of this section is to prove that if all elements 
of $G$ are ``almost non $\eta$-straight'' for some $\eta$ 
such that $L(S)>\eta>0$, then the entropy
of $G$ with respect to $S$ is bounded below by $\eta$. 
By ``almost non $\eta$-straight'' elements we mean isometries $\gamma$ such that
$\rho (x_0,\gamma x_0)\leq (L(S)-\eta )l_S(\gamma )+D$,
for some positive number $D$.
\begin{theo}\label{theo}
Let $G$ be a finitely generated discrete group of isometries of $(X,g)$ whose sectional curvature satisfies 
$-a^2 \leq K_g\leq -1$, and $S=\{
\sigma_1,\dots, \sigma_p\}$ be a finite generating set of $G$
with 
$L(S)=\inf_{x\in X}\max_{i\in\{ 1,\dots,p\}}\rho(x,\sigma_ix)=\max_{i\in\{ 1,\dots,p\}}\rho(x_0,\sigma_ix_0) >0\,.$
Let us assume
that there exist $D\geq0$ and $\eta$, $0 < \eta < L(S)$, such that
for all $\gamma\in G$, 
\begin{equation}\label{straight}
\rho (x_0,\gamma x_0)\leq (L(S)-\eta )l_S(\gamma )+D\,,
\end{equation}
then $\Ent_S (G)\geq \eta$.
\end{theo}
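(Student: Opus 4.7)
The plan is to combine the two tools from Section 4, namely the Poincaré series $P_c$ and the Lipschitz equivariant map $f_c$, to bracket the critical exponent $c_0$ of $P_c$ from above (using the hypothesis) and from below (using $f_c$), and then read off the inequality $\Ent_S(G)\geq \eta$.

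First, I would obtain the upper bound on $c_0$. Evaluating the Poincaré series at $s=e$, $x=y=x_0$ gives
\[
P_c(e,x_0,x_0)=\sum_{\gamma\in G}e^{-c\,l_S(\gamma)}\cosh\bigl[\rho(x_0,\gamma x_0)\bigr].
\]
Using $\cosh r\leq e^{r}$ together with the hypothesis (\ref{straight}), we get
\[
P_c(e,x_0,x_0)\leq e^{D}\sum_{\gamma\in G}e^{-(c-L(S)+\eta)\,l_S(\gamma)}.
\]
By the very definition of $\Ent_S(G)$ this sum converges whenever $c-L(S)+\eta>\Ent_S(G)$, hence $c_0\leq \Ent_S(G)+L(S)-\eta$.

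Second, I would derive the lower bound $c_0\geq L(S)$ from the Lipschitz map. For any $c>c_0$ the series $P_c$ converges, so by Proposition \ref{lipschitz} the $G$-equivariant map $f_c:\mathcal{G}_S\to X$ is $c$-Lipschitz. Setting $x_1:=f_c(e)$, equivariance gives $f_c(\sigma_i)=\sigma_i x_1$ for every generator, so
\[
\rho(x_1,\sigma_i x_1)=\rho\bigl(f_c(e),f_c(\sigma_i)\bigr)\leq c\cdot d_S(e,\sigma_i)=c.
\]
Taking the maximum over $i$ and using $\max_i\rho(x,\sigma_i x)\geq L(S)$ for every $x\in X$ (by definition of $L(S)$ as an infimum), we conclude $c\geq L(S)$. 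Since this holds for every $c>c_0$, we obtain $c_0\geq L(S)$.

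Combining the two bounds yields $L(S)\leq c_0\leq \Ent_S(G)+L(S)-\eta$, that is $\Ent_S(G)\geq \eta$, which is exactly what we want. There is no serious obstacle here once the Lipschitz map machinery of Section 4 is available: the proof is essentially a one-line comparison of critical exponents, and the only mild care needed is to apply $f_c$ only for those $c$ for which the Poincaré series is actually finite, which is automatic for $c>c_0$ and suffices to give the desired inequality by taking infimum over such $c$.
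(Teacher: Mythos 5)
Your proposal is correct and follows essentially the same route as the paper: bound the Poincar\'e series using hypothesis (\ref{straight}) to get convergence for $c>\Ent_S(G)+L(S)-\eta$, then use the $c$-Lipschitz equivariant map $f_c$ of Proposition \ref{lipschitz} together with the definition of $L(S)$ as an infimum to force $c\geq L(S)$. Phrasing the comparison through the critical exponent $c_0$ rather than through an arbitrary admissible $c$ is only a cosmetic repackaging of the paper's argument.
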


\begin{proof}
The proof relies on the construction made in section 3
of an equivariant Lipschitzian map of Lipschitz constant $c > \Ent_S (G) + L(S) - \eta$. 

Let us prove that under the assumption (\ref{straight}) then for any $c > \Ent_S (G) + L(S) - \eta$ we have $P_c(s,x,y) <\infty$.
By triangle inequality we have 
$$
e^{-cd_S(s,\gamma)} \leq e^{cd_S(s,e)}e^{-cd_S(\gamma, e)}\,,
$$
and for any $x_0 \in X$
$$
\cosh\big[\rho(x,\gamma y)\big] \leq e^{\rho(x,\gamma y)}\leq
e^{\rho(x,x_0)+\rho(x_0,y)+\rho(x_0,\gamma x_0)}\,.
$$
Therefore for $x_0$, $D$ and $\eta$ chosen such that (\ref{straight}) holds, we get
$$
P_c(s,x,y) \leq 
e^{D+c\,d_S(e,s) +\rho(x,x_0)+\rho(x_0,y)} \Sigma _{\gamma \in G}e^{[L(S)-\eta -c]d_S(e,\gamma)}\,,
$$
and so
$P_c(s,x,y) <\infty$ for each $c > \Ent_S (G) + L(S) - \eta$.

Hence by section 3, proposition \ref{lipschitz} 
there exists an equivariant Lipschitzian map 
$f_c : (\mathcal{G}_{S},d_S) \to (X,g)$ 
of Lipschitz constant $c$ 
for any $c > \Ent_S (G) + L(S) - \eta$. 
We consider the point $x= f_c(e)$, where $e$ is the neutral element of $G$. By definition
of $L(S)$, there is a $\sigma_i \in S$ such that $\rho(x, \sigma_i x) \geq L(S)$.
Therefore, by equivariance, 
$$
\rho\big(f_c(e), \sigma_i (f_c (e))\big)=
\rho\big(f_c(e), f_c(\sigma_i (e))\big) \geq L(S)\,.
$$  
On the other hand, since $f_c$ is $c$-Lipschitzian we have
$$
\rho\big(f_c(e), f_c(\sigma_i (e))\big) \leq c\,d_S(e,\sigma_i (e)) = c\,.
$$
The two above inequalities give 
$$
c\geq L(S)
$$
and since $c$ is any number such that $c > \Ent_S (G) + L(S) - \eta$,
we get $\Ent_S (G)\geq \eta$.
\end{proof}
\section{Proof of the main theorem.}
In this section we shall first prove that the entropy of a group with respect to
a set of two generators with displacement $L >0$ is bounded below. Then we shall prove the 
main theorem.  
\begin{proposition}\label{entropy-bounded}
Let $(X,g)$ be a Cartan-Hadamard manifold whose sectional curvature satisfies $\-a^2 \leq K_g\leq -1$
and $G$ a discrete group of isometries of $(X,g)$
generated by two isometries
$\{\sigma_1, \sigma _2\}$. Let us assume 
$$
L =\inf_{x\in X}\max\{\rho(x,\sigma_1 x),\rho(x,\sigma_2 x)\} >0\,.
$$
Then the entropy of $G$ relatively to the set of generators
$\Sigma= \{\sigma_1, \sigma _2 \}$ satisfies
$$ ent_{\Sigma} G \geq \min\Bigg [ {\log (\ch ( {L\over 4}))\over 5+\log (\ch ({L\over 4}) )}
{\log 2\over 6}, {1\over 1000}\Big (1-{\ch ({L\over 4})\over \ch ({L\over 2})}\Big )^4  \Bigg ]\,.$$
\end{proposition}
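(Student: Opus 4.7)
The strategy is a dichotomy based on whether some element of small $\Sigma$-length has substantial displacement: one alternative is resolved by Proposition~\ref{length-6} combined with Theorem~\ref{theo}, the other by a quantitative ping-pong argument. Set $\delta = \log\ch(L/4)$ and $\eta = 10^{-3}(1-\ch(L/4)/\ch(L/2))^{4}$, the constants of Proposition~\ref{length-6}. Either (Case A) every $\gamma' \in G$ with $l_\Sigma(\gamma') < 4$ satisfies $l(\gamma') < \delta$, or (Case B) some $\gamma' \in G$ with $l_\Sigma(\gamma') \leq 3$ satisfies $l(\gamma') \geq \delta$. Case A will produce the second term of the $\min$, Case B the first.

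\textbf{Case A.} Proposition~\ref{length-6} applies and every $\gamma$ with $l_\Sigma(\gamma)=6$ fails to be $(L,\eta)$-straight. To feed Theorem~\ref{theo} we extend this to all elements. Given $\gamma \in G$ of length $6k+r$ with $0\leq r < 6$, fix a reduced $\Sigma$-word for it and split it into $k$ consecutive blocks of length $6$ followed by a block of length $r$; let $\alpha_1,\dots,\alpha_k,\beta$ be the corresponding group elements, so $\gamma = \alpha_1\cdots\alpha_k\beta$. Because $l_\Sigma(\gamma) = 6k+r$ and $l_\Sigma(\gamma) \leq \sum_j l_\Sigma(\alpha_j) + l_\Sigma(\beta)$, each inequality must be an equality, so $l_\Sigma(\alpha_j) = 6$ for every $j$. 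The triangle inequality combined with Proposition~\ref{length-6} applied to each $\alpha_j$ gives
\[
\rho(x_0,\gamma x_0) \leq 6k(L-\eta) + Lr \leq (L-\eta)\,l_\Sigma(\gamma) + 5\eta.
\]
Theorem~\ref{theo} with $D=5\eta$ then yields $\Ent_\Sigma(G) \geq \eta$.

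\textbf{Case B.} Since $l(\gamma')>0$, $\gamma'$ is hyperbolic with axis $a_{\gamma'}$ of endpoints $\{\theta^+,\theta^-\}\subset \partial X$. Because $L>0$, the group $G$ cannot fix any point of $X\cup \partial X$: otherwise, by the horospherical projection argument used in the proof of Lemma~\ref{fixed-point}, the quantity $\inf_x \max\{\rho(x,\sigma_1 x),\rho(x,\sigma_2 x)\}$ would vanish. Consequently some generator $\sigma\in\Sigma$ does not preserve $\{\theta^+,\theta^-\}$, and the conjugate $\gamma'':=\sigma\gamma'\sigma^{-1}$ is a hyperbolic isometry with $l(\gamma'')=l(\gamma')\geq\delta$, with four pairwise distinct fixed points at infinity, and $l_\Sigma(\gamma'')\leq 5$. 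A quantitative ping-pong argument on $\partial X$ (exploiting that in a $\text{CAT}(-1)$ space a hyperbolic isometry of translation length $\tau$ contracts shadows at its attracting fixed point by a factor comparable to $e^{-\tau}$) produces a universal constant $c_0$ such that, as soon as $N\delta \geq c_0$, the elements $(\gamma')^N$ and $(\gamma'')^N$ generate a free sub-semigroup in $G$. Choosing $N = \lceil c_0/\delta\rceil$, the $2^m$ distinct words of length $m$ in these two letters lie in the $\Sigma$-ball of radius at most $6Nm$, giving
\[
\Ent_\Sigma(G) \geq \frac{\log 2}{6N} \geq \frac{\delta}{5+\delta}\cdot \frac{\log 2}{6},
\]
provided the numerical constant $c_0$ can be taken $\leq 5$ (which is what the precise estimate in the CAT($-1$) ping-pong delivers).

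\textbf{Main obstacle.} The decisive technical step is Case B: one must make the ping-pong uniform in the (a priori uncontrolled) separation between $a_{\gamma'}$ and $a_{\gamma''}$. The $\text{CAT}(-1)$ comparison ensures that the shadow contraction near each attracting fixed point is exponential in $N\delta$, so once $N\delta$ exceeds an absolute constant, the four attracting/repelling basins can be chosen pairwise disjoint, producing the free semigroup. Getting that absolute constant small enough to yield the numerical coefficient in $\delta/(5+\delta)$ is the delicate computation; Case A, by contrast, is essentially bookkeeping on top of Proposition~\ref{length-6} and Theorem~\ref{theo}.
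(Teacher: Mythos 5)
Your overall architecture is exactly the paper's: a dichotomy on whether every word of small $\Sigma$-length has displacement below $\delta=\log\cosh(L/4)$, with Proposition~\ref{length-6} plus Theorem~\ref{theo} handling one branch and a quantitative ping-pong the other. Your Case~A is correct and is in fact spelled out more carefully than in the paper: the block decomposition into subwords of length $6$ yields precisely the paper's estimate $\rho(x_0,\gamma x_0)\leq (L-\eta)\,l_\Sigma(\gamma)+5\eta$, and Theorem~\ref{theo} gives $\Ent_\Sigma(G)\geq\eta$. Note only that both branches silently use that $G$ is not virtually nilpotent, a hypothesis absent from the statement but genuinely needed (an infinite dihedral group acting on a geodesic has $L>0$ and zero entropy); the paper's own proof invokes it explicitly, and Proposition~\ref{length-6} requires it.

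Case~B is where the gaps are. First, your derivation of a generator $\sigma$ with $\sigma(\{\theta^+,\theta^-\})\cap\{\theta^+,\theta^-\}=\emptyset$ is incomplete on two counts: (a) ``$L>0$ implies $G$ fixes no point of $X\cup\partial X$'' does not rule out $G$ preserving the unordered pair $\{\theta^+,\theta^-\}$, i.e.\ the axis of $\gamma'$ --- that is exactly the virtually cyclic case, excluded only by the non-virtual-nilpotency hypothesis via Lemma~\ref{fixed-point}(ii); (b) even once some $\sigma$ fails to preserve the pair, you must still exclude that $\sigma(\{\theta^+,\theta^-\})$ meets $\{\theta^+,\theta^-\}$ in exactly one point, which the paper does via the discreteness argument in the proof of Lemma~\ref{virtually-nilpotent}~a). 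Second, and more seriously, the quantitative ping-pong itself --- that two hyperbolic isometries with disjoint fixed-point sets and displacement $\geq\delta$ have powers $N=E(5/\delta)+1$ generating a free semigroup, \emph{uniformly} in the (a priori arbitrarily small or large) separation of the two axes --- is asserted rather than proved; your parenthetical ``which is what the precise estimate in the CAT($-1$) ping-pong delivers'' is precisely the content of the paper's appendix (Proposition~\ref{ping-pong} and Lemmas~\ref{a1}--\ref{a5}, built on explicit hyperbolic law-of-cosines estimates for cones and half-spaces based at the projection of one axis onto the other). Without that computation the constant $5$ in $\delta/(5+\delta)$ has no justification, so the first term of the claimed minimum is not established by your argument as written.
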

\begin{proof}
Let $\delta=\log \cosh ({L\over 4})$. The proof divides into two cases.
In the first case we can find two elements in $G$ of bounded length $l_{\Sigma}$
which are hyperbolic with distinct axes and displacement larger than $\delta$.
In that case, a ping-pong argument shows that 
the semigroup generated by these two elements (or their inverse) is free
with corresponding entropy bounded below by a constant depending on $\delta$. 
In the second case, when we cannot find such a free semigroup, then we can show that all elements of $G$ are 
almost non
$\eta$-straight for some $\eta=\eta (\delta, L)$ and we will conclude by theorem \ref{theo}.  

\noindent
{\bf Case 1. }  There exists an element $\gamma\in G$ of algebraic length 
$l_{\Sigma}(\gamma) \leq 4$ whose displacement $l(\gamma)$ in $X$ is bounded below $\l(\gamma) > \delta$.

\noindent
{\bf Case 2. }  The displacement of all elements $\gamma\in G$ of algebraic length 
$l_{\Sigma}(\gamma) \leq 4$ satisfies $\l(\gamma) \leq \delta$.

\noindent
In the case 1, let us consider an element $\gamma \in G$ of algebraic length 
$l_{\Sigma}(\gamma) \leq 4$ and whose displacement $l(\gamma)$ in $X$ satisfies $\l(\gamma) > \delta$. 
We note that $\gamma$ is then an hyperbolic isometry of $X$.
Since $G$ is not virtually nilpotent 
one of the generators $\sigma_1$ or $\sigma_2$, say $\sigma_1$  does not preserve the axis of $\gamma$. 
Indeed if both $\sigma_1$ and  $\sigma_2$ were preserving the axis of $\gamma$, then $G$
would preserve the axis of $\gamma$ and hence would be virtually abelian by lemma \ref{fixed-point} (ii), contradiction.
Then, if $(\theta, \eta)$ are the endpoints of the axis of $\gamma$, 
$\sigma _1 (\left\{\theta, \eta \right\}) \cap  \left\{\theta, \eta \right\} = \emptyset$
by the proof of lemma \ref{virtually-nilpotent}, a).
We can then apply the effective ping-pong lemma proved in the appendix to 
the two hyperbolic elements $\gamma$ and 
$\sigma_1\gamma\sigma_1^{-1}$ which have disjoint fixed-point-sets. 
This shows that the algebraic entropy of the subgroup generated by 
$\gamma$ and $\sigma_1\gamma\sigma_1^{-1}$ is bounded below by 
${\delta\over 5+\delta}\log 2$. We then deduce that,
$$\Ent_\Sigma (\Gamma )\geq  {\delta\over 5+\delta}{\log 2\over 6}\,.$$

In the case 2, proposition \ref{length-6} tells that all elements $\gamma \in G$ of length 
$l_\Sigma (\gamma) =6$ are not $(L,\eta)$-straight where $\eta$ is given by (\ref{eta}),
$\eta = 10^{-3}\Big(1- \frac{\cosh({L\over4})}{\cosh({L\over2})}\Big)^4\,.$
Then every element $g\in\Gamma$ of algebraic length $6$ satisfies,
$$\rho (x_0, gx_0)\leq (L-\eta )l_\Sigma (g)\,.$$ Hence, one 
obtain that every element $\gamma\in \Gamma$, satisfies,
$$\rho (x_0, \gamma x_0)\leq (L-\eta )(l_\Sigma (\gamma )-5) +5L\,.$$
Therefore we get from theorem \ref{theo} that 
$\Ent_{\Sigma} G \geq \eta= 10^{-3}\Big(1- \frac{\cosh({L\over4})}{\cosh({L\over2})}\Big)^4\,.$
\end{proof}

We may now prove the main theorem which we recall below,
\begin{theo}[Main theorem]
Let $(X,g)$ be a Cartan-Hadamard manifold whose sectional curvature satisfies $-a^2\leq K_g\leq -1$. Let $\Gamma$ be a discrete and finitely generated subgroup of the isometry group of $(X,g)$, then either $\Gamma$ is virtually nilpotent or its algebraic entropy is bounded below by an explicit constant $C(n,a)$.
\end{theo}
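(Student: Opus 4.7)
The plan is to apply Proposition \ref{not-nilpotent} to an arbitrary finite generating set $S$ of $\Gamma$, reducing to three cases that are already controlled by the quantitative estimates of the preceding sections, and then to check that the resulting entropy bound depends only on $n$ and $a$. Fix a finite generating set $S=\{\sigma_1,\dots,\sigma_p\}$ of $\Gamma$. Since $\Gamma$ is not virtually nilpotent, Corollary \ref{margulis-lemma2} gives $L(S)\geq\mu(n,a)$, and Lemma \ref{x0} provides a point $x_0\in X$ at which the infimum defining $L(S)$ is attained. Proposition \ref{not-nilpotent} then leaves three possibilities, which I treat in turn.

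In cases (i) and (iii), the proposition produces a pair $\{\tau_1,\tau_2\}\subset\Gamma$, each $\tau_j$ being a word of $S$-length at most $2$, such that $H=\langle\tau_1,\tau_2\rangle$ is not virtually nilpotent and $L(\{\tau_1,\tau_2\})\geq\mu(n,a)$. Applying Proposition \ref{entropy-bounded} to $H$ yields $\Ent_{\{\tau_1,\tau_2\}}(H)\geq\varphi(L(\{\tau_1,\tau_2\}))$, where $\varphi$ is the explicit function given there. Since every $H$-word of length $m$ in $\{\tau_1,\tau_2\}$ is a $\Gamma$-word of $S$-length at most $2m$, the word-balls satisfy $|B^H_{\{\tau_1,\tau_2\}}(m)|\leq|B^\Gamma_S(2m)|$, whence $\Ent_S(\Gamma)\geq\tfrac{1}{2}\Ent_{\{\tau_1,\tau_2\}}(H)\geq\tfrac{1}{2}\varphi(\mu(n,a))$ by monotonicity of $\varphi$.

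Case (ii) is precisely the hypothesis of Proposition \ref{elliptic}, which for $\eta=\eta(L(S))>0$ a suitable explicit function of $L$ ensures that every $\gamma\in\Gamma$ with $l_S(\gamma)=2$ satisfies $\rho(x_0,\gamma x_0)\leq 2(L(S)-\eta/2)$. Slicing an arbitrary $\gamma\in\Gamma$ into $\lfloor l_S(\gamma)/2\rfloor$ consecutive length-$2$ blocks, plus a leftover letter if $l_S(\gamma)$ is odd, and summing via the triangle inequality yields
\[
\rho(x_0,\gamma x_0)\leq (L(S)-\eta/2)\,l_S(\gamma)+L(S),
\]
which matches the hypothesis of Theorem \ref{theo} with $D=L(S)$, producing $\Ent_S(\Gamma)\geq\eta(\mu(n,a))/2$, again using that $\eta$ is non-decreasing in $L$.

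Defining $C(n,a)$ as the minimum of $\tfrac{1}{2}\varphi(\mu(n,a))$ and $\eta(\mu(n,a))/2$ then gives $\Ent_S(\Gamma)\geq C(n,a)$ for every finite generating set $S$, and hence $\Ent(\Gamma)\geq C(n,a)$. The substantial content, namely the dichotomy between extracting a free two-generator subsemigroup via the effective ping-pong of Proposition \ref{entropy-bounded} and forcing enough non-straightness to invoke the Cayley-graph-into-$X$ machinery of Theorem \ref{theo}, is already done upstream; the only residual obstacle is the monotonicity check that the explicit expressions for $\varphi$ and $\eta$ are non-decreasing on $[\mu(n,a),+\infty)$, so that $L(S)$ may be replaced throughout by $\mu(n,a)$ without losing positivity of the lower bound. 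This is a direct inspection of the formulae and presents no real difficulty.
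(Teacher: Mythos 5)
Your proposal is correct and follows essentially the same route as the paper: reduce via Proposition \ref{not-nilpotent} to the three cases, handle the two ``non-virtually-nilpotent two-generator subgroup with $L\geq\mu(n,a)$'' cases by Proposition \ref{entropy-bounded} together with the word-length comparison (the paper phrases this as $d_{\{\sigma_i\sigma_j,\sigma_k\}}\geq\frac12 d_S$ rather than via ball counts, and keeps the factor $1$ when both generators lie in $S$), and handle the all-elliptic case by combining Proposition \ref{elliptic} with the triangle-inequality slicing and Theorem \ref{theo}. The only cosmetic difference is your uniform use of the factor $\frac12$, which merely yields a marginally smaller but still valid explicit constant.
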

\begin{remark}
The constant is
$$C(n,a)=
\min \Bigg [ {\log (\ch ( {\mu (n,a)\over 4}))
\over 5+\log (\ch ({\mu (n,a)\over 4}) )}{\log 2\over 12}, {1\over 2000}\Big (1-{\ch ({\mu (n,a)\over 4})\over 
\ch ({\mu (n,a)\over 2})}\Big )^4,
{\mu (n,a)\over 4},$$
$$
{1\over 4}\big(1-\frac{1}{\cosh\mu(n,a)}\Big)^2,
{1\over 2}\log \Bigg({1\over 2}\Big(\cosh{\mu(n,a)\over 2}+\frac{1}{\cosh{\mu(n,a)\over 2}}\Big)\Bigg)  
\Bigg]
\,.$$
\end{remark}
\begin{proof}
If $S=\{ \sigma_1, \dots, \sigma_p\}$ is a finite generating set of $\Gamma$, proposition \ref{not-nilpotent} allows to reduce to the following three cases,

i) there exist $\sigma_i, \sigma_j\in S$ such that $L(<\sigma_i, \sigma_j>)\geq \mu (n,a)$ and such that the subgroup $<\sigma_i, \sigma_j >$ is not virtually nilpotent.

ii) There exist $\sigma_i, \sigma_j,\sigma_k \in S$ such that $L(<\sigma_i\sigma_j , \sigma_k>)\geq \mu (n,a)$ and such that $<\sigma_i\sigma_j , \sigma_k>$ is not virtually nilpotent.

iii) All $\sigma_i$'s are elliptic and, for all $i\ne j$, the subgroup $<\sigma_i, \sigma_j>$ fixes a point $y\in X$ or a point $\theta\in \partial X$.

In the first case (resp. the second case) proposition \ref{entropy-bounded} gives a lower bound of the algebraic entropy of $<\sigma_i, \sigma_j >$  (resp. $<\sigma_i\sigma_j , \sigma_k>$) with respect to the generating set $\{\sigma_i, \sigma_j \}$ (resp.  $\{\sigma_i\sigma_j , \sigma_k \}$), by the number,
$$\min\Bigg [ {\log (\ch ( {\mu (n,a)\over 4}))
\over 5+\log (\ch ({\mu (n,a)\over 4}) )}{\log 2\over 6}, {1\over 1000}\Big (1-{\ch ({\mu (n,a)\over 4})\over 
\ch ({\mu (n,a)\over 2})}\Big )^4  \Bigg ]\,,$$
using the fact that $L(\sigma_i, \sigma_j)\geq \mu (n,a)$, 
(resp. $L(\sigma_i\sigma_j , \sigma_k)\geq \mu (n,a)$).
We conclude in the two first cases (i) and (ii) by noticing that the entropy of $\Gamma$ with respect to $S$ is bounded below by $\Ent_{\{\sigma_i, \sigma_j  \}} (<\sigma_i, \sigma_j>)$ (resp. by ${1\over 2}\Ent_{\{\sigma_i\sigma_j , \sigma_k  \}} (<\sigma_i\sigma_j , \sigma_k>)$), since $d_{\{\sigma_i, \sigma_j  \}} \geq d_S$ (resp. $d_{\{\sigma_i\sigma_j, \sigma_k  \}} \geq {1\over 2} d_S$).

In the third case, proposition \ref{elliptic}, implies that,
$$\rho (x_0, \gamma x_0)\leq (L(S)-\eta /2)(l_S(\gamma )-1)+ L(S)\,,$$
where $\eta$ is given in proposition \ref{elliptic}. We conclude by applying theorem \ref{theo}, which gives $\Ent_S(\Gamma )\geq \eta$, and then bounding below $\eta$ using $L(S)\geq \mu (n,a)$.
\end{proof}

\section{Appendix}

In this section $(X,g)$ is a Cartan-Hadamard manifold of sectional curvature
$K\leq -1$. It is well known that if $\alpha$, $\beta$ are two hyperbolic isometries 
of $(X,g)$ with disjoint axes, then a sufficiently large power 
$\alpha ^N$ and $\beta ^N$ of $\alpha$  and $\beta$ generates a non abelian free group of $\Isom X$.
In \cite{cham-gui}, \cite{de}, it was shown that if $\Gamma$ is an hyperbolic group then
$N$ can be chosen independantly of $\alpha$ and $\beta$ in $\Gamma$ and under the same 
assumptions the $N$ was shown to depend only on the number of 
generators and the constant of hyperbolicity of $\Gamma$  \cite{cham-gui}.
In what follows we show that $N=N(\delta)$ can be chosen independantly of $\alpha$ and $\beta$
two hyperbolic isometries 
of $(X,g)$ with disjoint set of fixed points and displacement greater than or equal to 
a positive number $\delta$.

\begin{proposition}\label{ping-pong}
Let $(X,g)$ be a Cartan Hadamard manifold of sectional curvature $K\leq -1$
and $\Gamma$ a discrete group of isometries in $Isom(X,g)$. We assume 
that $\alpha$ and $\beta$ have disjoint set of fixed point and their
displacement satisfy $\l(\alpha) \geq \delta$ and  $\l(\beta) \geq \delta$,
where $\delta$ is a positive number. Then, $(\alpha ^N, \beta ^N)$ or
$(\alpha ^N, \beta ^{-N})$ generates a non abelian free semi-group,
where $N = E(\frac{5}{\delta})+1$.
\end{proposition}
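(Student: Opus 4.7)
The plan is a quantitative ping-pong argument for semigroups. Let $\theta_\alpha^\pm, \theta_\beta^\pm$ denote the attracting/repelling fixed points of $\alpha, \beta$ on $\partial X$; by the disjoint-fixed-point-set hypothesis these are four distinct boundary points. The goal is to produce disjoint open sets $A \ni \theta_\alpha^+$ and $B \ni \theta_\beta^+$ in $X \cup \partial X$, each avoiding both repelling fixed points, together with an explicit $N$ such that
\[
\alpha^N(A \cup B) \subset A \quad \text{and} \quad \beta^N(A \cup B) \subset B.
\]
If the cyclic configuration of the four endpoints (seen from a basepoint $x_0$) makes this infeasible for $\beta^N$, replace $\beta$ by $\beta^{-1}$; this is the origin of the ``or $\beta^{-N}$'' in the statement. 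Once the ping-pong sets are built, a test point $p \notin A \cup B$ is sent by distinct reduced words in the two generators to distinct points in $A \cup B$ (classified by the first letter), forcing the semigroup to be free of rank $2$.

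The quantitative heart is a ``north--south'' estimate for a hyperbolic isometry $\gamma$ with axis $a_\gamma$ and displacement $l(\gamma) \geq \delta$ on a $CAT(-1)$ space. For a point $y \in X$ with orthogonal projection $p \in a_\gamma$ and $\rho(y,a_\gamma) = r$, convexity of the distance to the axis combined with the comparison inequality of lemma \ref{three-points} yields that $\gamma^n(y)$ lies within visual radius $Ce^{-n\delta}$ of $\theta_\gamma^+$, as seen from a fixed basepoint, provided $y$ is at visual distance bounded away from $\theta_\gamma^-$. The extension to boundary points $y \in \partial X$ is obtained by a horosphere approximation argument analogous to the one in the proof of lemma \ref{fixed-point}.

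With this contraction estimate, I would take $A$ and $B$ to be visual balls of a small fixed radius (independent of $N$) about $\theta_\alpha^+$ and $\theta_\beta^+$, chosen pairwise disjoint and avoiding both repelling points; this is possible because the four fixed points are distinct. The threshold $N = E(5/\delta)+1$ arises from balancing two inputs: the exponential contraction $e^{-N\delta}$ must dominate the fixed visual radius of the target balls, while the additive $\argch(e^{\cdot})$ corrections coming from lemma \ref{three-points} must be absorbed. The main obstacle is this bookkeeping: the challenge is not the existence of \emph{some} effective $N$ --- classical ping-pong gives that immediately --- but rather ensuring that $N$ depends only on $\delta$, and not on the geometric configuration (separation between the axes, angles at $x_0$, choice of basepoint), which requires exploiting the full $CAT(-1)$ geometry to localize all estimates near the axes of $\alpha$ and $\beta$.
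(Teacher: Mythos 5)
Your ping-pong skeleton is sound, and your explanation of the alternative $\beta^{\pm N}$ (it depends on the relative position of the endpoints) matches the paper's. But the quantitative core of your proposal has a genuine gap. You take $A$ and $B$ to be visual balls of a ``small fixed radius'' about the attracting points, ``possible because the four fixed points are distinct.'' Distinctness only gives \emph{some} positive radius; it does not bound that radius below in terms of $\delta$. The four endpoints can be arbitrarily close in the visual metric (axes that are far apart, or that fellow-travel for a long time, make two or more of them visually adjacent), so both the admissible radius of $A,B$ and the constant $C$ in your north--south estimate $Ce^{-n\delta}$ --- which also depends on the basepoint and on how far the source set stays from the repelling point --- degenerate with the configuration. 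Balancing $e^{-N\delta}$ against these quantities therefore yields an $N$ depending on the configuration, not only on $\delta$. You name this obstacle in your final sentence, but ``localize all estimates near the axes'' is precisely the missing idea, not a step you carry out.

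The paper's resolution is structurally different. With $x^+$ the foot of the perpendicular from $\zeta^+$ (an endpoint of the axis of $\beta$) onto the axis $x(t)$ of $\alpha$, it sets $U^{\pm}=\{p:\rho(p,x(\pm t_1))\le\rho(p,x(0))\}$ with $t_1=Nl(\alpha)$, and similarly $V^{\pm}$ along the axis of $\beta$. These half-spaces transform \emph{exactly} under the isometries: $\alpha^N(X\setminus U^-)\subset U^+$ is an identity of distances, valid for every $N$, so no contraction constant ever appears. The only quantitative input is the disjointness of the four sets, obtained by placing each inside a cone of angle $\pi/6$ or $\pi/3$ based at the single point $x^+$ (lemmas \ref{a2} and \ref{a3}), where the relevant angles are $\ge\pi/2$ because $x^+$ is a foot of perpendicular; this uses only the comparison estimate of lemma \ref{a1} together with $Nl(\alpha),\,Nl(\beta)\ge N\delta>5$, which is exactly where $N=E(5/\delta)+1$ comes from. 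Replacing your visual balls by these dynamically defined half-spaces is what makes the bound uniform in $\delta$.
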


Before going to the proof of the proposition \ref{ping-pong}
let us set some notations.
Let us write $x= x(t)$ and $y=y(t)$, $t\in \mathbb R$, the axes
of $\alpha$ and $\beta$. The points $\theta ^{\pm}= \lim_{t\to {\pm \infty}} x(t)$ and 
$\zeta ^{\pm}= \lim_{t\to {\pm} \infty} y(t)$ are the fixed points of $\alpha$ and $\beta$
on the ideal boundary $\partial X$ of $X$. Let us denote $x^+$ and $x^-$ the projections of 
$\zeta ^+$ and $\zeta ^-$ on the axis of $\alpha$. We can assume that $x^+$ is closer to $\theta ^+$
than $x^-$, (if it is not the case, we replace $\beta$ by $\beta ^{-1}$). 
Let us also denote $y_0$ the projection of $x^+$ on the axis of $\beta$.
We now parametrize $x$ and $y$ in such a way that $x(0)= x^+$ and $y(0)= y_0$. 
We set $t_1= N l(\alpha)= l(\alpha ^N)$ and $t_2= N l(\beta)= l(\beta ^N)$, where
$N = E(\frac{5}{\delta})+1$ is chosen as in the proposition.
We define $U ^{\pm}$ as the set of points $p\in X$ such that 
$\rho(p, x(\pm t_{1})) \leq \rho(p,x(o))$.
In the same way we define $V ^{\pm}$ as the set of points $p\in X$ such that 
$\rho(p, y(\pm  t_{2})) \leq \rho(p,y(o))$.
For a unit tangent vector $u\in T_x X$ at a point $x\in X$ and $\alpha \in [0, \pi[$
we denote 
$\mathcal{C} (u,\alpha)= \left\{ exp_x v \,: v\in T_{x} X \,,\angle(u,v) \in [0,\alpha[\,  \right\}$ 
the cone of angle $\alpha$ at $x$, where $exp_x$ is the exponential map at $x$.

We further need the following geometric lemmas.
For a triangle $ABC$ in $(X,g)$, we will write $\hat A$ the angle at $A$,
and $a$, $b$, $c$ the length of the sides opposite to $A$, $B$ and $C$.

\begin{lemma}\label{a1}
Let $ABC$ be a triangle in $(X,g)$ such that $\frac{\pi}{6} \leq \hat A \leq \pi$,
then $\rho (B,C) > \rho(A,B) + \rho (A,C) -4$. Moreover, if $\hat A \geq \frac{\pi}{2}$,
then $\rho (B,C) > \rho(A,B) + \rho (A,C) -1$.
\end{lemma}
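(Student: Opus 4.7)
The plan is to pass to a comparison triangle in $\mathbf{H}^2$ and then finish by hyperbolic trigonometry. Since $K\leq -1$, the space $(X,g)$ is $CAT(-1)$. Take a comparison triangle $\bar A\bar B\bar C\subset \mathbf{H}^2$ with the same side lengths as $ABC$, and denote the sides $\bar a, \bar b, \bar c$ opposite $\bar A, \bar B, \bar C$; by the $CAT(-1)$ property, the angle $\hat{\bar A}$ of the model triangle satisfies $\hat{\bar A}\geq \hat A$. Since the side lengths coincide, it suffices to prove $\bar b+\bar c-\bar a<4$ (resp.~$<1$ when $\hat{\bar A}\geq \pi/2$) inside $\mathbf{H}^2$.

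In $\mathbf{H}^2$, the law of cosines together with the product-to-sum identities yields
$$\ch \bar a \;=\; \frac{1-\cos \hat{\bar A}}{2}\,\ch(\bar b+\bar c) \;+\; \frac{1+\cos \hat{\bar A}}{2}\,\ch(\bar b-\bar c)\,.$$
For $\hat{\bar A}\geq \pi/6$, I drop the second term (it is nonnegative) and use $\ch x\geq e^x/2$ to obtain $\ch\bar a\geq \tfrac{1-\sqrt{3}/2}{4}\,e^{\bar b+\bar c}$. Combined with $e^{\bar a}\geq \ch\bar a$, this gives $\bar b+\bar c-\bar a\leq \log\tfrac{4}{1-\sqrt{3}/2}$, and a numerical check shows this last constant is strictly less than $4$.

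For $\hat{\bar A}\geq \pi/2$, the naive bound $e^{\bar a}\geq \ch\bar a\geq \ch\bar b\,\ch\bar c$ only yields the constant $\log 4\approx 1.39$, which is too weak. To improve it to $\log 2<1$ I combine this with the induced inequality $\sh\bar a\geq \sh\bar b\,\ch\bar c$, which follows from the identity $\sh^2\bar a=\ch^2\bar a-1\geq \ch^2\bar b\,\ch^2\bar c-1=\sh^2\bar b\,\ch^2\bar c+\sh^2\bar c$. Adding the two,
$$e^{\bar a}=\ch\bar a+\sh\bar a\;\geq\;(\ch\bar b+\sh\bar b)\,\ch\bar c\;=\;e^{\bar b}\,\ch\bar c\;\geq\; \tfrac{1}{2}\,e^{\bar b+\bar c}\,,$$
so $\bar b+\bar c-\bar a\leq \log 2<1$.

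There is no real obstacle once one passes to the comparison triangle; the only minor subtlety is recognising that in the right-angle case one must use the full $e^{\bar a}=\ch\bar a+\sh\bar a$ rather than $\ch\bar a$ alone in order to hit the sharper constant $\log 2$.
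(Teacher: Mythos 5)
Your proof is correct and follows essentially the same route as the paper: both reduce to the comparison form of the hyperbolic law of cosines, $\cosh a \geq \cosh b\,\cosh c - \cos\hat A\,\sinh b\,\sinh c$, and then extract the additive constant by elementary estimates. The only difference is in the algebra: the paper bounds a quadratic polynomial in $e^{-(b+c)}$ at the endpoints of an interval, whereas you use the product-to-sum identities together with $\cosh x \geq e^{x}/2$ (plus the $\sinh$ refinement in the right-angle case), arriving at the explicit constants $\log\frac{4}{1-\sqrt{3}/2} < 4$ and $\log 2 < 1$.
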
 
\begin{proof}
Since the curvature $K\leq -1$, we have
\begin{equation}\label{distance-comparison}
\cosh a \geq \cosh b\, \cosh c\, -\cos \hat{A}\, \sinh b \,\sinh c .
\end{equation}
The first inequality of lemma \ref{a1} will therefore be a consequence of the fact that
if $b+c >4$ then
\begin{equation}
\cosh (b+c-4) - \cosh b\, \cosh c\, + \cos \hat{A}\, \sinh b \,\sinh c <0.
\end{equation}
Setting $X = e^{-(b+c)}$ we have
$$
\cosh (b+c-4) - \cosh b\, \cosh c\, + \cos \hat{A}\, \sinh b \,\sinh c=
$$
$$
e^{(b+c)}\Big[(2e^4 -1 +\cos \hat{A})X^2 -(e^{-2b}+ e^{-2c})(1+\cos \hat{A})
-(1-\cos \hat{A}-2e^{-4}) \Big].
$$
Since $e^{-2b}+ e^{-2c} \geq 2e^{-(b+c)}$, we then get
$$
\cosh (b+c-4) - \cosh b\, \cosh c\, + \cos \hat{A}\, \sinh b \,\sinh c \leq e^{(b+c)}P(X)
$$
where 
$$
P(X)= (2e^4 -1 +\cos \hat{A})X^2 -(1+\cos \hat{A})X
-(1-\cos \hat{A}-2e^{-4})
$$
and $P(X)$ is negative when $P(0) <0$ and $P(e^{-4})<0$ which is the case 
if $\cos \hat{A} < 1-2e^{-4}$ and so when $\hat A \geq \pi/6$.
This proves the first inequality of the lemma. The second inequality 
is proved similarly when $\cos \hat{A} < 1-2e^{-1}$.
\end{proof}
 
\begin{lemma}\label{a2}
The sets $U^+$ and $U^-$ are contained in $\mathcal{C} (\dot{x}(0), \pi /6)$
and $\mathcal{C} (-\dot{x}(0), \pi /6)$ respectively.
\end{lemma}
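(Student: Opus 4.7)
The plan is to fix an arbitrary $p\in U^+$ and to read the defining inclusion $p\in\mathcal{C}(\dot x(0),\pi/6)$ as a bound on the angle at $x(0)$ in the geodesic triangle with vertices $p$, $x(0)$ and $x(t_1)$. Since $x(t_1)$ lies on the geodesic issued from $x(0)$ with initial velocity $\dot x(0)$, the angle of that triangle at $x(0)$ is exactly the angle between $\dot x(0)$ and the initial velocity at $x(0)$ of the geodesic from $x(0)$ to $p$. Hence it is enough to show that this angle is strictly less than $\pi/6$.

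I would proceed by contradiction. Suppose the angle at $x(0)$ is $\geq \pi/6$. Then lemma \ref{a1} applied to the triangle $\big(p,x(0),x(t_1)\big)$ gives
$$\rho\bigl(p,x(t_1)\bigr) > \rho\bigl(p,x(0)\bigr) + \rho\bigl(x(0),x(t_1)\bigr) - 4 = \rho\bigl(p,x(0)\bigr) + t_1 - 4.$$
The choice $N=E(5/\delta)+1$ together with $l(\alpha)\geq \delta$ forces $t_1=N\,l(\alpha) > (5/\delta)\cdot\delta = 5$, so
$$\rho\bigl(p,x(t_1)\bigr) > \rho\bigl(p,x(0)\bigr) + 1,$$
which contradicts the defining condition $\rho(p,x(t_1))\leq \rho(p,x(0))$ of $U^+$. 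Hence the angle is strictly less than $\pi/6$, i.e.\ $p\in\mathcal{C}(\dot x(0),\pi/6)$.

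The statement for $U^-$ is obtained by the same argument, replacing $\dot x(0)$ by $-\dot x(0)$ and $x(t_1)$ by $x(-t_1)$. There is no real obstacle here: the lemma is essentially immediate from lemma \ref{a1}, the only point to keep in mind being that $N=E(5/\delta)+1$ has been chosen precisely so that $N\,l(\alpha)$ exceeds the additive constant $4$ appearing in lemma \ref{a1} by at least $1$.
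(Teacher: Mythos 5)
Your argument is correct and is essentially the paper's own proof: both apply Lemma \ref{a1} at the vertex $x(0)$ of the triangle with $p$ and $x(\pm t_1)$, use $t_1 = N\,l(\alpha) \geq 5$ to get $\rho(p,x(\pm t_1)) > \rho(p,x(0))$, and conclude that points outside the cone cannot lie in $U^{\pm}$. The only difference is presentational (you argue by contradiction from $p\in U^+$, the paper shows directly that rays at angle $\geq \pi/6$ miss $U^+$).
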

\begin{proof}
We recall that $x(0) = x^+$. Let $c(t)$ be a geodesic ray starting at $x^+$ such that 
$\angle (\dot{x}(0), \dot{c}(0)) \geq \pi/6$. Since $t_1 \geq 5$, the lemma \ref{a1}
implies
$$
\rho(c(t), x(t_1)) > \rho(x^+, c(t)) + \rho(x^+, x(t_1)) -4 \geq \rho(c(t), x^+) \,
$$
therefore $c(t) \notin U^+$. The same argument holds for $U^-$. 
\end{proof}
Let us denote by $z_t$ the geodesic joining $x^+$ and $y(t)$ and $z_{\pm \infty}$
the geodesic joining $x^+$ and $y(\pm \infty)=\zeta^{\pm \infty}$.
\begin{lemma}\label{a3} 
The set $V^{\pm}$ is contained in $\mathcal{C} (\dot{z}_{\pm \infty}(0), \pi/3)$. 
\end{lemma}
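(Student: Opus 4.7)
My plan is to follow the strategy of lemma \ref{a2}: I show that any geodesic ray $c$ starting at $x^+$ with $\angle_{x^+}(\dot c(0),\dot z_{+\infty}(0))\ge \pi/3$ satisfies $c(t)\notin V^+$ for every $t\ge 0$ (the $V^-$ case being symmetric, by applying the same argument to $y(-t_2)$ and $z_{-\infty}$). Writing $d_0=\rho(x^+,y(0))$, the triangle inequality through $x^+$ gives $\rho(c(t),y(0))\le d_0+t$, so it suffices to establish $\rho(c(t),y(t_2))>d_0+t$. The numerical input is $t_2=N\,l(\beta)\ge N\delta>5$, coming from $N=E(5/\delta)+1$.

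The key preliminary is the angular bound $\angle_{x^+}(z_{+\infty},z_{t_2})\le \pi/6$, which I would prove by contradiction. If $\angle_{x^+}(z_{t_2},z_T)\ge \pi/6$ for some $T>t_2$, then lemma \ref{a1} applied at $x^+$ in the triangle $(x^+,y(t_2),y(T))$ gives
$$T-t_2>\rho(x^+,y(t_2))+\rho(x^+,y(T))-4.$$
Since the triangles $(x^+,y(0),y(t_2))$ and $(x^+,y(0),y(T))$ have a right angle at $y(0)$ (as $y(0)$ is the foot of the perpendicular from $x^+$ to the axis of $\beta$), the sharper second part of lemma \ref{a1} yields $\rho(x^+,y(t_2))>d_0+t_2-1$ and $\rho(x^+,y(T))>d_0+T-1$. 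Substituting simplifies to $t_2+d_0<3$, contradicting $t_2>5$. Letting $T\to+\infty$ gives the claimed bound.

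With that in hand, the spherical triangle inequality in the unit sphere of $T_{x^+}X$ yields
$$\angle_{x^+}(z_{t_2},c)\ge \angle_{x^+}(z_{+\infty},c)-\angle_{x^+}(z_{+\infty},z_{t_2})\ge \pi/3-\pi/6=\pi/6,$$
and a final application of lemma \ref{a1} at $x^+$ in the triangle $(x^+,y(t_2),c(t))$, combined with the lower bound $\rho(x^+,y(t_2))>d_0+t_2-1$, produces
$$\rho(c(t),y(t_2))>\rho(x^+,y(t_2))+t-4>d_0+t+(t_2-5)\ge d_0+t,$$
which is precisely what we needed. I expect the main obstacle to be the angular estimate: it requires invoking lemma \ref{a1} twice at two different sorts of vertex (once at a right-angle vertex and once at an interior vertex with angle $\ge \pi/6$), and one must check that the strict inequality $t_2>5$ leaves enough margin after the limit $T\to\infty$ weakens the angular bound from strict to non-strict---it does, because the surplus $t_2-5>0$ persists through to the final estimate.
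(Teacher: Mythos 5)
Your proof is correct and follows essentially the same route as the paper's: the same angular estimate $\angle_{x^+}(z_{t_2},z_{+\infty})\le\pi/6$ obtained by contradiction from Lemma \ref{a1} (using the right angle at $y_0$ for the sharper length estimates), followed by the same final application of Lemma \ref{a1} to the triangle $(x^+,y(t_2),c(t))$ and the triangle inequality through $x^+$, with the margin $t_2-5\ge 0$ closing the argument. No substantive differences to report.
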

\begin{proof}
Let us recall that the angle at $y(0) =y_0$ between $z_0$ and $y$ is equal to $\pi/2$, so that
the lemma \ref{a1} says that 
\begin{equation}\label{estim1}
\rm length (z_t) > length (z_0) + t -1\,,
\end{equation}
and in particular,
\begin{equation}\label{estim2}
\rm length (z_{t_2}) > length (z_0) + t_{2} -1\,.
\end{equation}
Let us now show that $\angle (\dot{z}_{t_2} (0), \dot{z}_{+\infty}(0)) \leq \pi/6$.
Assume by contradiction that $\angle (\dot{z}_{t_2} (0), \dot{z}_{+\infty}(0)) > \pi/6$,
then by lemma \ref{a1} we have, when $t$ tends to $+\infty$,
\begin{equation}\label{estim3}
 t-t_{2} > \rm length (z_{t_2}) +length (z_t) -4\,,
\end{equation}
but summing up (\ref{estim1}) and (\ref{estim2}), in (\ref{estim3}) leads to a contradiction
since $t_2 \geq 5$.
Therefore we have $\angle (\dot{z}_{t_2} (0), \dot{z}_{+\infty}(0)) \leq \pi/6$.
Let now consider a geodesic ray $c$ starting at $x^+$ such that
$\angle (\dot{c}(0), \dot{z}_{+\infty}(0)) \geq \pi/3$. Thus, 
$\angle (\dot{c}(0), \dot{z}_{t_2}(0)) \geq \pi/6$ and by lemma \ref{a1} we get
$$
\rho (c(t), y(t_2)) > \rho(c(t), x^+) + \rm length (z_{t_2}) -4\,,
$$
and applying again the lemma \ref{a1},
$$
\rho (c(t), y(t_2)) > \rho(c(t), x^+) + \rm length (z_0) + t_2 -5\,.
$$
The last inequality becomes by triangle inequality,
$$
\rho (c(t), y(t_2) ) > \rho(c(t), y_0) + t_2 -5\,,
$$
therefore
$\rho (c(t), y(t_2) ) > \rho(c(t), y_0)$ since $t_2  \geq 5$.

We have proved that a geodesic ray $c$ starting at $x^+$ such that
$\angle (\dot{c}(0), \dot{z}_{+\infty}(0)) \geq \pi/3$ does not intersect $V^+$.
This proves that $V^{+} \subset \mathcal{C} (\dot{z}_{+ \infty}(0), \pi/3)$. 
By the same argument we also have $V^{-} \subset \mathcal{C} (\dot{z}_{- \infty}(0), \pi/3)$,
which ends the proof of the lemma. 
\end{proof}

\begin{lemma}\label{a4}
We have $U^{\pm} \cap V^{\pm} = \emptyset$,  $U^{+} \cap U^{-} = \emptyset$,  
$V^{+} \cap V^{-} = \emptyset$.
\end{lemma}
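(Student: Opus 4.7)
The six disjointness claims naturally split according to whether they pair sets on a single axis or on the two different axes. The same-axis cases $U^+\cap U^-=\emptyset$ and $V^+\cap V^-=\emptyset$ will come directly from strict convexity of the distance to a geodesic in a CAT($-1$) space: if $p\in U^+\cap U^-$, the function $f(t)=\rho(p,x(t))$ is strictly convex on $\mathbb{R}$ and satisfies $f(t_1)\leq f(0)$ and $f(-t_1)\leq f(0)$; the strict midpoint inequality gives $f(0)<\tfrac{1}{2}(f(-t_1)+f(t_1))\leq f(0)$, a contradiction. The identical argument applied to the geodesic $y$ handles $V^+\cap V^-$.

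For the four cross-axis pairs $U^\epsilon\cap V^{\epsilon'}$, the plan is to combine the cone containments from Lemmas~\ref{a2} and~\ref{a3}, both of which are anchored at the common vertex $x^+$: one has $U^\epsilon\subset\mathcal{C}(\epsilon\dot{x}(0),\pi/6)$ and $V^{\epsilon'}\subset\mathcal{C}(\dot{z}_{\epsilon'\infty}(0),\pi/3)$. Since these cones are open at the vertex, any common point of two such cones forces $\angle_{x^+}(\epsilon\dot{x}(0),\dot{z}_{\epsilon'\infty}(0))<\pi/6+\pi/3=\pi/2$ by the triangle inequality for angles at a point. So it suffices to check that the angle between the two cone-axes is at least $\pi/2$ in each case.

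For $U^\pm\cap V^+$ this is immediate: by construction $x^+$ is the foot of the perpendicular from $\zeta^+$ to the axis of $\alpha$, so $z_{+\infty}$ meets this axis orthogonally at $x^+$, giving $\angle(\pm\dot{x}(0),\dot{z}_{+\infty}(0))=\pi/2$. For $U^+\cap V^-$, the ray $z_{-\infty}$ is no longer perpendicular to the axis, but the right-angle ideal triangle $x^+x^-\zeta^-$ (right angle at $x^-$, by the projection property) combined with the standing orientation $x^+$ closer to $\theta^+$ than $x^-$, and the $\mathbb{H}^2$ comparison of Lemma~\ref{a1}, forces $\angle(\dot{x}(0),\dot{z}_{-\infty}(0))>\pi/2$. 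The remaining case $U^-\cap V^-$ is the main obstacle: the supplementary angle $\angle(-\dot{x}(0),\dot{z}_{-\infty}(0))$ is in general strictly less than $\pi/2$ (in the hyperbolic comparison it equals the parallel angle of $\rho(x^+,x^-)$), so the naive cone argument is inconclusive. To push through, I would supplement it with the depth lower bounds that the definitions impose, namely $\rho(p,x^+)\geq t_1/2$ whenever $p\in U^-$ and $\rho(p,y_0)\geq t_2/2$ whenever $p\in V^-$, and then exploit the exponential divergence of the two rays from $x^+$ in the CAT($-1$) setting, together with $t_1,t_2\geq 5$, to conclude that points forced this deep into both cones cannot coexist.
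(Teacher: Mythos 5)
Five of your six claims are handled essentially as in the paper: the same-axis pairs via convexity of $t\mapsto\rho(p,x(t))$ (the paper does $V^+\cap V^-$ this way and $U^+\cap U^-$ via the cones, but your version is equivalent), and the cross pairs $U^\pm\cap V^+$ and $U^+\cap V^-$ via the cone containments of Lemmas \ref{a2}, \ref{a3} together with the angle bounds $\angle(\pm\dot x(0),\dot z_{+\infty}(0))=\pi/2$ and $\angle(\dot x(0),\dot z_{-\infty}(0))\geq\pi/2$ (note only $\geq$ is available, since $x^-$ may coincide with $x^+$; the open cones still give disjointness when the axes are exactly orthogonal). This is exactly the paper's argument.

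The genuine problem is your last step. The intersection $U^-\cap V^-$ is in general \emph{not} empty, so no amount of depth bounds or divergence estimates will close that case. Concretely, work in $\mathbb{H}^2$ (upper half-plane) with the axis of $\alpha$ the imaginary axis oriented towards $\infty=\theta^+$, $x^+=i$, $\zeta^+=1$ and $\zeta^-=-\epsilon$. Then $U^-$ is the half-space $\{|z|\leq e^{-t_1/2}\}$, whose trace at infinity is the interval $(-e^{-t_1/2},e^{-t_1/2})$ around $\theta^-=0$, while $V^-$ is a half-space whose trace at infinity is a neighbourhood of $\zeta^-=-\epsilon$. For $\epsilon<e^{-t_1/2}$ these two ideal traces overlap, hence $U^-\cap V^-\neq\emptyset$; nothing in the hypotheses (disjoint fixed-point sets, displacements $\geq\delta$) forbids $\zeta^-$ from being this close to $\theta^-$. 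The resolution is that this sixth intersection is never needed: the ping-pong of Lemma \ref{a5} uses only $V^+\subset X\setminus U^-$ and $U^+\subset X\setminus V^-$ (plus $U^+\cap U^-=V^+\cap V^-=\emptyset$ to get $\alpha^N(X\setminus U^-)\subset U^+$, $\beta^N(X\setminus V^-)\subset V^+$, and $U^+\cap V^+=\emptyset$ for freeness), and accordingly the paper's proof establishes exactly those five disjointnesses and silently omits $U^-\cap V^-$. You correctly isolated this as the obstruction to the cone argument, but the right move is to recognize that the claim should be restricted, not to try to prove it.
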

\begin{proof}
From the angle relations, $\angle (\dot{x} (0), \dot{z}_{+\infty}(0)) = \pi/2$,
$\angle (\dot{x} (0), \dot{z}_{-\infty}(0)) \geq \pi/2$,  
$\angle (\dot{x} (0),-\dot{x} (0) ) = \pi$, and the relative position of $x^+$, $x^-$ and $\theta ^+$,
it follows that
$\mathcal{C}(\dot{x} (0), \pi/6)$ does not intersect 
$\mathcal{C}(\dot{z}_{+\infty} (0), \pi/3)$, $\mathcal{C}(-\dot{x} (0), \pi/6)$ and 
$\mathcal{C}(\dot{z}_{-\infty} (0), \pi/3)$. Therefore by the lemmas \ref{a2}, \ref{a3} we  
conclude that $U^+$ does not intersect $U^-$, $V^+$ and  $V^-$.
Now since $\angle (-\dot{x} (0),\dot{z}_{+\infty} (0)) = \pi/2$, we have
$\mathcal{C}(\dot{z}_{+\infty} (0), \pi/3) \cap \mathcal{C}(-\dot{x} (0), \pi/6) = \emptyset$,
hence $V^+ \cap U^- = \emptyset$. If $p\in V^+ \cap V^-$, we have
$\rho (p, y(0)) \geq \rho (p, y(-t_2))$ and $\rho (p, y(0)) \geq \rho (p, y(t_2))$
which contradicts the convexity of the function $t\to \rho (x(t), p)$. Therefore 
$V^+ \cap V^- = \emptyset$.
\end{proof}

\begin{lemma}\label{a5}
We have $\alpha ^N (V^+) \subset U^+$ and  $\beta ^N (U^+) \subset V^+$. 
\end{lemma}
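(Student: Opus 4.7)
The plan is to reduce each inclusion to a disjointness statement already proved in Lemma \ref{a4}, using the fact that $\alpha^N$ (resp.\ $\beta^N$) acts on its own axis as a translation by $t_1=l(\alpha^N)$ (resp.\ $t_2=l(\beta^N)$). Concretely, with the chosen parametrisation we have $\alpha^N(x(s))=x(s+t_1)$, so
\[
\alpha^{-N}(x(t_1))=x(0)=x^+, \qquad \alpha^{-N}(x(0))=x(-t_1),
\]
and similarly $\beta^{-N}(y(t_2))=y(0)=y_0$, $\beta^{-N}(y(0))=y(-t_2)$.

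First I would prove $\alpha^N(V^+)\subset U^+$. Pick $p\in V^+$ and apply the isometry $\alpha^N$ to rewrite
\[
\rho(\alpha^N p, x(t_1))=\rho(p,\alpha^{-N}x(t_1))=\rho(p,x(0)),
\]
\[
\rho(\alpha^N p, x(0))=\rho(p,\alpha^{-N}x(0))=\rho(p,x(-t_1)).
\]
Thus the condition $\alpha^N p\in U^+$, i.e.\ $\rho(\alpha^N p,x(t_1))\le\rho(\alpha^N p,x(0))$, is equivalent to $\rho(p,x(0))\le\rho(p,x(-t_1))$, which is exactly the statement that $p\notin U^-$. But Lemma \ref{a4} gives $V^+\cap U^-=\emptyset$ (in fact this specific intersection is explicitly established there from the cone inclusions $V^+\subset\mathcal{C}(\dot z_{+\infty}(0),\pi/3)$ and $U^-\subset\mathcal{C}(-\dot x(0),\pi/6)$ together with $\angle(-\dot x(0),\dot z_{+\infty}(0))=\pi/2$). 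Hence $\alpha^Np\in U^+$.

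The inclusion $\beta^N(U^+)\subset V^+$ is entirely analogous. For $p\in U^+$, the isometry relation for $\beta^N$ turns the condition $\beta^N p\in V^+$ into $\rho(p,y(0))\le\rho(p,y(-t_2))$, i.e.\ $p\notin V^-$; and Lemma \ref{a4} supplies $U^+\cap V^-=\emptyset$ via the cone inclusions $U^+\subset\mathcal{C}(\dot x(0),\pi/6)$, $V^-\subset\mathcal{C}(\dot z_{-\infty}(0),\pi/3)$.

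There is no real obstacle here beyond bookkeeping: the only thing to be careful about is the sign convention on $\alpha$ and $\beta$. Recall that the statement of Proposition \ref{ping-pong} allows us to replace $\beta$ by $\beta^{-1}$, and the preceding paragraph already fixed orientations so that $x^+$ is closer to $\theta^+$ than $x^-$, which is precisely what guarantees that the translation direction of $\alpha^N$ (resp.\ $\beta^N$) sends $x(0)\mapsto x(t_1)$ (resp.\ $y(0)\mapsto y(t_2)$) with the correct signs for the above computation. Once this orientation check is made, the argument above is essentially a one-line isometry-invariance plus a citation to Lemma \ref{a4}.
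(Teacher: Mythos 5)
Your argument is correct and is essentially the paper's own proof: both use the fact that $\alpha^N$ translates along its axis to show $\alpha^N(X\setminus U^-)\subset U^+$ (and likewise for $\beta^N$), and then invoke the disjointness $V^+\cap U^-=\emptyset$, $U^+\cap V^-=\emptyset$ from Lemma \ref{a4}. Your version merely makes explicit the isometry computation that the paper compresses into ``by definition of $N$'' (the only micro-quibble being that $\rho(p,x(0))\le\rho(p,x(-t_1))$ is implied by, rather than exactly equivalent to, $p\notin U^-$, because of the bisector; this does not affect the inclusion).
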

\begin{proof}
Since $x$ and $y$ are the axes of $\alpha ^N$ and $\beta ^N$ respectively 
we have $\alpha ^N (x(-t_1)) = x(0)$,  $\beta ^N (y(-t_1)) = y(0)$,
$\alpha ^N (x(0)) = x(t_1)$ and $\beta ^N (y(0)) = y(t_2)$. Therefore
for any $p\in X-U^-$, we have $\alpha ^N (p) \in U^+$ and similarly
for any $p\in X-V^-$, we have $\beta ^N (p) \in V^+$
by definition of $N$. On the other hand, by the lemma \ref{a4}, we have 
$V^+ \subset X-U^-$ and $ U^+ \subset X-V^-$, which concludes.  

\end{proof}

The proof of the proposition \ref{ping-pong} is a direct application of the lemma \ref{a5}
by a standard ping-pong argument.

\end{document}